\newtheorem{theorem}{Theorem}[section]
\newtheorem{proposition}[theorem]{Proposition}
\newtheorem{assumption}[theorem]{Assumption}
\newtheorem{remark}[theorem]{Remark}
\newtheorem{example}[theorem]{Example}
\renewcommand{\@biblabel}[1]{#1.}
\newcounter{count}
\begin{document}

\title{Perspectives on characteristics based curse-of-dimensionality-free numerical approaches for solving
Hamilton--Jacobi equations}

\author{Ivan~Yegorov$^1$ $ ^* $ $\dagger$, \quad Peter~Dower$^1$}

\maketitle

\let\thefootnote\relax\footnote{$^1$ The University of Melbourne, Department of Electrical and Electronic Engineering,
Parkville Campus, Melbourne, Victoria 3010, Australia}

\let\thefootnote\relax\footnote{$\dagger$ Corresponding author. E-mail: ivanyegorov@gmail.com}

\let\thefootnote\relax\footnote{$^*$ Also known as Ivan~Egorov}

\begin{abstract}
This paper extends the considerations of the works~\cite{DarbonOsher2016,ChowDarbonOsherYin2017} regarding
curse-of-dimensio\-nality-free numerical approaches to solve certain types of Hamilton--Jacobi equations arising in optimal control
problems, differential games and elsewhere. A rigorous formulation and justification for the extended Hopf--Lax formula of
\cite{ChowDarbonOsherYin2017} is provided together with novel theoretical and practical discussions including useful recommendations.
By using the method of characteristics, the solutions of some problem classes under convexity/concavity conditions on Hamiltonians
(in particular, the solutions of Hamilton--Jacobi--Bellman equations in optimal control problems) are evaluated separately at different
initial positions. This allows for the avoidance of the curse of dimensionality, as well as for choosing arbitrary computational
regions. The corresponding feedback control strategies are obtained at selected positions without approximating the partial
derivatives of the solutions. The results of numerical simulations demonstrate the high potential of the proposed techniques.
It is also pointed out that, despite the indicated advantages, the related approaches still have a limited range of applicability,
and their extensions to Hamilton--Jacobi--Isaacs equations in zero-sum two-player differential games are currently developed only for
sufficiently narrow classes of control systems. That is why further extensions are worth investigating.
\end{abstract}

\noindent{\bf Keywords:} optimal control, differential games, feedback strategies, Hamilton--Jacobi equations,
minimax/viscosity solutions, Pontryagin's principle, method of characteristics, grid-based methods, curse of dimensionality

\section{Introduction}

It is well known that first-order Hamilton--Jacobi equations constitute a central theoretical framework to describe feedback
(closed-loop) solutions of deterministic continuous-time optimal control problems and differential
games~\cite{Subbotin1995,Melikyan1998,YongZhou1999,FlemingSoner2006,BardiCapuzzoDolcetta2008,Yong2015}. A complete analytical
investigation of these equations is provided only for rather specific problems. Most of the widely used numerical approaches, such as
finite-difference schemes
\cite{FlemingSoner2006,CrandallLions1984,OsherShu1991,JiangPeng2000,ZhangShu2003,BokanForcadelZidani2010,
BokanCristianiZidani2010,ROCHJ2017},
semi-Lagrangian schemes
\cite{BardiCapuzzoDolcetta2008,FalconeFerretti1998,Falcone2006,CristianiFalcone2007,ROCHJ2017},
level set methods
\cite{OsherSethian1988,Osher1993,Sethian1999,OsherFedkiw2003,MitchellBayenTomlin2001,MitchellBayenTomlin2005,
MitchellLevelSetToolbox2012},
etc., require dense state space discretizations, and their computational cost grows exponentially with the increase of
the state space dimension~$ n $, so that they in general become almost inapplicable for $ n \geqslant 4 $. The related
circumstances were referred to as the curse of dimensionality by R.~Bellman \cite{Bellman1957,Bellman1961}. Thus, it is
relevant to develop techniques that can help to avoid the curse of dimensionality for nontrivial problems.

In
\cite{McEneaney2006,McEneaney2007,McEneaneyDeshpandeGaubert2008,McEneaneyKluberg2011,GaubertMcEneaneyQu2011,AkianGaubertLakhoua2008,
KaiseMcEneaney2010,AkianFodjo2016},
several approaches for overcoming the curse of dimensionality through the advanced framework of max-plus algebra and analysis were
proposed. They work for specific classes of nonlinear optimal control problems. For example, the methods of
\cite{McEneaney2006,McEneaney2007,McEneaneyDeshpandeGaubert2008,McEneaneyKluberg2011,GaubertMcEneaneyQu2011}
can be applied if the Hamiltonian is represented as the maximum or minimum of a finite number of elementary Hamiltonians
corresponding to linear-quadratic optimal control problems. Despite the evident potential of mitigating the curse of dimensionality,
some practical issues often appear when implementing such methods, i.\,e., the curse of complexity may be a formidable barrier
(see \cite[\S 7.5]{McEneaney2006}, \cite[Section~6]{McEneaney2007}, \cite[Sections~IV,VI]{McEneaneyDeshpandeGaubert2008},
\cite[Section~9]{McEneaneyKluberg2011}, \cite{GaubertMcEneaneyQu2011}).

Another promising direction is representing the solutions of Hamilton--Jacobi equations in a way that reduces their computation at
isolated positions to finite-dimensional optimization (throughout the current paper, a position means a vector of the form~$ (t, x) $,
where $ t $ is a time and $ x $ is a state at this time). As opposed to grid-based numerical approaches, this allows the solutions
to be evaluated separately at different initial positions. Then the curse of dimensionality can be avoided, and it becomes easy
to arrange parallel computations. However, there may still appear the curse of complexity when constructing global (or semi-global)
solution approximations, and sparse grid techniques \cite{KangWilcox2017} can be useful for this purpose if the state space dimension
is not too high ($ n \leqslant 6 $).

For state-independent Hamiltonians under certain conditions, the sought-after representations are given by Hopf--Lax and Hopf
formulae \cite{Hopf1965,Subbotin1995,Evans1998,Rublev2000,Evans2014}. The related practical considerations can be found in
\cite{DarbonOsher2016}. In \cite{ChowDarbonOsherYin2017}, some extensions of these representations to problems with state-dependent
Hamiltonians were conjectured. The method of characteristics (or, more precisely, a generalization of its classical form) for
first-order Hamilton--Jacobi equations played a key role there.

This paper encompasses the following goals:
\begin{itemize}
\item  to give a rigorous formulation and justification for the first main conjecture of \cite{ChowDarbonOsherYin2017}
(i.\,e., for the extended Hopf--Lax formula) concerning Hamilton--Jacobi equations whose Hamiltonians are convex or concave
with respect to the adjoint variable (Hamilton--Jacobi--Bellman equations for optimal control problems form a typical
subclass here);
\item  to provide novel theoretical and practical discussions including useful recommendations, as well as a detailed
numerical investigation;
\item  to point out the limited range of applicability of the related approaches, and, in particular, to highlight principal
issues in their extension to Hamilton--Jacobi--Isaacs equations (for zero-sum two-player differential games) whose Hamiltonians are
in general nonconvex and nonconcave with respect to the adjoint variable.
\end{itemize}

Note also the works \cite{Mirica1985,Subbotina1991,Subbotina1992,Subbotina2006} that used the method of characteristics
in order to derive fundamental representations for the minimax/viscosity solutions of Hamilton--Jacobi equations under specific
assumptions including some smoothness and convexity/concavity conditions on Hamiltonians. The boundary value problems for
characteristic systems of ordinary differential equations (describing the dynamics of the state and adjoint variables along
characteristic curves) were involved there. However, the related computational algorithms as formulated in
\cite{SubbotinaTokmantsev2009_1,SubbotinaTokmantsev2009_2,SubbotinaKolpakova2010} are rather complicated and also suffer
from the curse of dimensionality.

For simplifying the numerical implementation of such characteristics approaches and making them curse-of-dimensionality-free,
it is crucial to parametrize characteristic fields not with respect to the terminal state but with respect to the initial
adjoint vector~\cite{ChowDarbonOsherYin2017}. This allows Cauchy problems for characteristic systems to be solved instead of
boundary value problems that might have multiple solutions and thereby cause the practical dilemma of obtaining a needed solution.
The theoretical constructions of the current work employ this idea, with the considerations of \cite{ChowDarbonOsherYin2017}
serving as a primary motivation.

Our paper is organized as follows. In Section~2, a general Hamilton--Jacobi equation under smoothness and convexity/concavity
conditions on the Hamiltonian is considered. Under certain assumptions, we establish a representation of its minimax/viscosity
solution through the parametrization of the characteristic field with respect to the initial adjoint vector. Section~3 develops that
for Hamilton--Jacobi--Bellman equations in optimal control problems without the aforementioned smoothness conditions on Hamiltonians.
It also contains important practical recommendations and a detailed analysis for Eikonal type partial differential equations.
In Section~4 and Appendix, we provide an overview of existing curse-of-dimensionality-free techniques for solving particular
classes of Hamilton--Jacobi--Isaacs equations in linear zero-sum two-player differential games (besides, most of the introduced results
and examples were not available in English-language scientific literature according to our knowledge). This helps to better understand
the limited range of applicability of the curse-of-dimensionality-free approaches in our study and others. The results of our numerical
investigation are presented in Section~5.

Let us also indicate some basic notations that are used throughout the paper:
\begin{itemize}
\item  given integer numbers~$ j_1 $ and $ j_2 \geqslant j_1 $, we write $ i = \overline{j_1, j_2} $ instead of
$ \: i \, = \, j_1, j_1 + 1, \ldots, j_2 $;
\item  given $ j \in \mathbb{N} $, the origin in $ \mathbb{R}^j $ is denoted by $ O_j $ or simply $ 0 $ (if there is
no confusion with zeros in other spaces), $ \: \| \cdot \| = \| \cdot \|_j = \| \cdot \|_{\mathbb{R}^j} \: $ is
the Euclidean norm in $ \mathbb{R}^j $ (some other kinds of vector norms are specified in Section~4),
$ \| \cdot \|_{\mathbb{R}^{j \times j}} $ is the matrix norm in $ \mathbb{R}^{j \times j} $ induced by
the Euclidean vector norm, and, for vectors $ \: v_1, v_2 \, \in \, \mathbb{R}^j, \: $ we write $ v_1 \uparrow\uparrow v_2 $
if they have the same direction, i.\,e., if $ v_2 = \alpha v_1 $ for some $ \alpha > 0 $;
\item  if a vector variable~$ \xi $ consists of some arguments of a map $ \: F \, = \, F(\ldots, \xi, \ldots), \: $ then
$ \mathrm{D}_{\xi} F $ denotes the standard (Fr\'echet) derivative of $ F $ with respect to $ \xi $, and $ \mathrm{D} F $ is
the standard derivative with respect to the vector of all arguments (the exact definitions of the derivatives depend on
the domain and range of $ F $);
\item  given a function $ \: F: \, \Xi_1 \to \mathbb{R}, \: $ the sets of its global minimizers and maximizers on
$ \Xi \subseteq \Xi_1 $ are denoted by $ \: \mathrm{Arg} \min_{\xi \in \Xi} \, F(\xi) \: $ and
$ \: \mathrm{Arg} \max_{\xi \in \Xi} \, F(\xi), \: $ respectively, while the criteria for the related optimization problems are
written as $ \: F(\xi) \, \longrightarrow \, \inf_{\xi \in \Xi} \: $ (or
$ \: F(\xi) \, \longrightarrow \, \min_{\xi \in \Xi} \: $ if the minimum is reached),
$ \: F(\xi) \, \longrightarrow \, \sup_{\xi \in \Xi} \: $ (or
$ \: F(\xi) \, \longrightarrow \, \max_{\xi \in \Xi} \: $ if the maximum is reached);
\item  the convex hull of a set~$ M $ in some linear space is denoted by $ \, \mathrm{conv} \: M, \, $ and the effective domain of
a convex function $ \: F: \, M \, \to \, \mathbb{R} \cup \{ +\infty \} \: $ is written as
$$
\mathrm{dom} \: F \:\: \stackrel{\mathrm{def}}{=} \:\: \{ \xi \in M \: \colon \: F(\xi) \, < \, +\infty \};
$$
\item  the Minkowski sum of two sets $ M_1, M_2 $ in some linear space is defined as
$$
M_1 + M_2 \:\: \stackrel{\mathrm{def}}{=} \:\: \{ \xi_1 + \xi_2 \: \colon \: \xi_1 \in M_1, \:\: \xi_2 \in M_2 \},
$$
and, if $ M_1 = \{ \xi \} $ is singleton, it is convenient to write $ \xi + M_2 $ instead of $ \{ \xi \} + M_2 $.
\end{itemize}
Other notations are introduced as needed.

\section{A curse-of-dimensionality-free characteristics approach for solving Hamilton--Jacobi equations under \\ smoothness and
convexity/concavity conditions on \\ Hamiltonians}

Given a fixed finite time horizon~$ T \in (0, +\infty) $, consider the Cauchy problem for a general Hamilton--Jacobi equation
\begin{equation}
\frac{\partial V(t, x)}{\partial t} \: + \: \mathcal{H}(t, \, x, \, \mathrm{D}_x V(t, x)) \:\: = \:\: 0, \quad
(t, x) \: \in \: (0, T) \times \mathbb{R}^n,  \label{F_1_1}
\end{equation}
\begin{equation}
V(T, x) \: = \: \sigma(x), \quad x \in \mathbb{R}^n.  \label{F_1_2}
\end{equation}

Let the following basic conditions hold.

\begin{assumption}  \label{Ass_1}
The following properties hold{\rm :}
\begin{list}{\rm \arabic{count})}%
{\usecounter{count}}
\item  the Hamiltonian
\begin{equation}
[0, T] \times \mathbb{R}^n \times \mathbb{R}^n \: \ni \: (t, x, p) \:\: \longmapsto \:\:
\mathcal{H}(t, x, p) \: \in \: \mathbb{R}  \label{F_2}
\end{equation}
is continuous{\rm ;}
\item  there exist positive constants~$ C_1, C_2 $ such that{\rm ,} for all
$ \: (t, x) \, \in \, [0, T] \times \mathbb{R}^n \: $ and $ \: p', p'' \, \in \, \mathbb{R}^n, \: $
we have
\begin{equation}
|\mathcal{H}(t, x, p') \: - \: \mathcal{H}(t, x, p'')| \:\: \leqslant \:\: C_1 \, (1 + \| x \|) \, \| p' - p'' \|,  \label{F_3}
\end{equation}
and
\begin{equation}
|\mathcal{H}(t, x, 0)| \: \leqslant \: C_2 \, (1 + \| x \|);  \label{F_4}
\end{equation}
\item  for any compact set $ K \subset \mathbb{R}^n ${\rm ,} there exists a number~$ C_3(K) > 0 $ such that{\rm ,} for all
$ \: (t, p) \, \in \, [0, T] \times \mathbb{R}^n \: $ and $ \: x', x'' \, \in \, K, \: $
we have
\begin{equation}
|\mathcal{H}(t, x', p) \: - \: \mathcal{H}(t, x'', p)| \:\: \leqslant \:\: C_3(K) \, (1 + \| p \|) \, \| x' - x'' \|;
\label{F_5}
\end{equation}
\item  the terminal function
\begin{equation}
\mathbb{R}^n \: \ni \: x \:\: \longmapsto \:\: \sigma(x) \: \in \: \mathbb{R}  \label{F_6}
\end{equation}
is continuous.
\end{list}
\end{assumption}

For the considerations of this section, it is convenient to understand a generalized solution of
the problem~(\ref{F_1_1}),~(\ref{F_1_2}) in the minimax sense~\cite[Chapter~II]{Subbotin1995}. A continuous function
$$
(0, T] \times \mathbb{R}^n \: \ni \: (t, x) \:\: \longmapsto \:\: V(t, x) \: \in \: \mathbb{R}
$$
is called a minimax solution of (\ref{F_1_1}),~(\ref{F_1_2}) if it fulfills the terminal condition~(\ref{F_1_2}) and
the following: for any $ \: (t_0, x_0) \, \in \, (0, T) \times \mathbb{R}^n \: $ and $ p \in \mathbb{R}^n $,
there exist a time~$ t_1 \in (t_0, T) $ and a Lipschitz continuous function
$$
[t_0, t_1] \: \ni \: t \:\: \longmapsto \:\: (x(t), \, z(t)) \: \in \: \mathbb{R}^n \times \mathbb{R}
$$
such that
$$
(x(t_0), \, z(t_0)) \: = \: (x_0, \, V(t_0, x_0)), \quad z(t) \: = \: V(t, x(t))
$$
for all $ t \in [t_0, t_1] $ and the equation
$$
\dot{z}(t) \:\: = \:\: \left< \dot{x}(t), p \right> \: - \: \mathcal{H}(t, x(t), p)
$$
holds for almost every $ t \in [t_0, t_1] $.

Let us introduce a fundamental result on the existence of a unique minimax solution of (\ref{F_1_1}),~(\ref{F_1_2}).

\begin{theorem}{\rm \cite[Theorem~II.8.1]{Subbotin1995}}  \label{Thm_2}
Under Assumption~{\rm \ref{Ass_1},} there exists a unique minimax solution of
the Cauchy problem~{\rm (\ref{F_1_1}),~(\ref{F_1_2})}.
\end{theorem}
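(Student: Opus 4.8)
The plan is to establish existence and uniqueness by complementary arguments, after first reinterpreting the defining identity as an integral \emph{weak-invariance} property: the requirement that, for every fixed $ p \in \mathbb{R}^n $, one can find a Lipschitz trajectory emanating from $ (x_0, V(t_0, x_0)) $ that remains on the graph of $ V $ while obeying $ \dot{z}(t) = \left< \dot{x}(t), p \right> - \mathcal{H}(t, x(t), p) $ says precisely that the graph of $ V $ is viable with respect to the family of characteristic differential inclusions indexed by $ p $. Since Assumption~\ref{Ass_1} imposes no convexity, I would not rely on a single representation formula but rather on the general machinery tailored to this viability formulation.

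For existence, I would use the structural bounds (\ref{F_3})--(\ref{F_4}) to derive an a~priori Lipschitz-and-growth estimate for candidate solutions, then realize the solution by approximation. Concretely, mollify $ \mathcal{H} $ in $ (x, p) $ and $ \sigma $ to obtain smooth data $ \mathcal{H}_m, \sigma_m $ converging locally uniformly while keeping (\ref{F_3})--(\ref{F_5}) with uniform constants; for each smooth problem the minimax solution coincides with the value function of an associated differential game (obtained by writing the $ p $-Lipschitz Hamiltonian as a min--max of functions affine in $ p $), whose existence follows from Krasovskii--Subbotin theory. The uniform constants from (\ref{F_3})--(\ref{F_4}) yield equi-Lipschitz and equi-bounded $ V_m $ on compact sets and, crucially, a finite-propagation-speed bound controlling the $ x $-growth; Arzel\`a--Ascoli then gives a locally uniform limit $ V $, and the weak-invariance identity passes to the limit because the admissible trajectories have uniformly bounded speeds.

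For uniqueness, I would pass from the integral (trajectory) formulation to the equivalent infinitesimal one, characterizing a minimax solution by a pair of inequalities relating the lower and upper directional (Dini) derivatives of $ V $ to $ \mathcal{H} $; this equivalence is where the viability/invariance theory does the real work. Given two minimax solutions $ V_1, V_2 $, I would then run a comparison argument of Lyapunov type---equivalently, a doubling-of-variables estimate---to show $ V_1 \leqslant V_2 $ and, by symmetry, equality. The delicate point is that the domain is all of $ \mathbb{R}^n $ with only sublinear control of the data, so the comparison must be localized by barrier functions built from (\ref{F_3})--(\ref{F_4}); the same growth conditions guarantee that characteristics starting in a bounded set stay, up to time $ T $, in a controlled bounded set, which makes the localization legitimate. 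I expect this unbounded-domain comparison---reconciling the nonsmoothness of $ V_j $ with the need for a clean Gronwall/penalization estimate---to be the main obstacle, with the verification that the constructed limit satisfies the exact identity for \emph{every} $ p $ a close second.
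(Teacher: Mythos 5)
You should first note that the paper contains no proof of this statement: Theorem~\ref{Thm_2} is quoted verbatim from \cite[Theorem~II.8.1]{Subbotin1995}, so your proposal can only be measured against the argument in that monograph. In outline, your plan is faithful to it: reading the definition as weak invariance (viability) of the graph of $V$ under the $p$-indexed characteristic inclusions is exactly the intended standpoint, and uniqueness there is likewise obtained from a comparison theorem for upper and lower solutions whose localization rests on the growth conditions (\ref{F_3}), (\ref{F_4}) and the local condition (\ref{F_5}); the cited proof runs the comparison along pairs of viable trajectories with Lyapunov-type weights rather than by test-function doubling, and your variant additionally imports the Dini-derivative criteria and, implicitly, the minimax/viscosity equivalence of Remark~\ref{Rem_3} as black boxes, but this half of the plan is sound.

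The genuine gap is in the existence half. The mollification addresses a non-obstacle while leaving the real obstacle in place: no smoothness is needed to write a $p$-Lipschitz Hamiltonian as a min--max of functions affine in $p$, since the Pasch--Hausdorff envelope gives
\begin{equation*}
\mathcal{H}(t,x,p) \;=\; \min_{q \, \in \, \mathbb{R}^n} \; \max_{\| f \| \, \leqslant \, C_1 (1 + \| x \|)}
\bigl\{ \left< p - q, \, f \right> \: + \: \mathcal{H}(t,x,q) \bigr\}
\end{equation*}
under (\ref{F_3}) alone. What Krasovskii--Subbotin game theory requires is \emph{compactness of the control sets}, and that is precisely what fails here: the minimum above is attained at $q = p$, and if $q$ is restricted to any fixed compact set the identity becomes false for $p$ outside it (take $\mathcal{H} \equiv 0$). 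Since the definition of a minimax solution quantifies over all $p \in \mathbb{R}^n$, you cannot invoke the game-theoretic existence theorem off the shelf; you must first localize in $x$ (finite propagation speed) and bound the $p$'s that actually matter, via sub-/superdifferential estimates, before a truncated compact-control game represents $\mathcal{H}$ where it is evaluated. Compounding this, your claimed equi-Lipschitz bounds on $V_m$ are unavailable: Assumption~\ref{Ass_1} makes $\sigma$ only continuous, the Lipschitz constants of the mollified $\sigma_m$ blow up as $m \to \infty$, and the approximate solutions inherit only equicontinuity (through the modulus of $\sigma$), not equi-Lipschitz-ness --- so the very $p$-truncation you need must come from an extra layer of approximation (treat Lipschitz $\sigma$ first, then pass to continuous $\sigma$ using the sup-norm stability furnished by comparison). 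Finally, your limit passage asserts that admissible trajectories have uniformly bounded speeds, but the paper's definition allows arbitrary Lipschitz trajectories; the equivalence with the formulation in which velocities are constrained by $\| f \| \leqslant C_1 (1 + \| x \|)$ is true but is itself a separate step that must precede the Arzel\`a--Ascoli argument. None of these defects is fatal to the strategy, but each is a missing idea rather than a routine verification.
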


\begin{remark}  \label{Rem_3}  \rm
In the literature concerning Hamilton--Jacobi equations{\rm ,} the notion of minimax solutions is used less frequently than
the notion of viscosity solutions {\rm \cite{Melikyan1998,YongZhou1999,FlemingSoner2006,BardiCapuzzoDolcetta2008,Yong2015},}
although they are in principle equivalent. Indeed{\rm ,} due to {\rm \cite[\S I.4]{Subbotin1995}} and
{\rm \cite[\S II.4]{FlemingSoner2006},} the unique minimax solution in Theorem~{\rm \ref{Thm_2}} coincides with the unique
viscosity solution of the formally rewritten Cauchy problem
\begin{equation}
\left\{ \begin{aligned}
& -\frac{\partial V(t, x)}{\partial t} \: - \: \mathcal{H}(t, \, x, \, \mathrm{D}_x V(t, x)) \:\: = \:\: 0, \quad
(t, x) \: \in \: (0, T) \times \mathbb{R}^n, \\
& V(T, x) \: = \: \sigma(x), \quad x \in \mathbb{R}^n.
\end{aligned} \right.  \label{F_7}
\end{equation}
Note also that{\rm ,} if the problem~{\rm (\ref{F_1_1}),~(\ref{F_1_2})} is considered not in the whole
set~$ (0, T] \times \mathbb{R}^n $ but just in the subregion~$ (0, T] \times G $ with an open
domain~$ G \subset \mathbb{R}^n ${\rm ,} then the corresponding minimax/viscosity solution can be similarly defined.
\qed
\end{remark}

Let us briefly describe an approach for representing the minimax solution of (\ref{F_1_1}),~(\ref{F_1_2})
through boundary value problems for the related characteristic system under some additional conditions (see, for instance,
\cite[\S II.10.5, \S II.10.6]{Subbotin1995} and \cite[\S 7.3]{Subbotina2006}).

First, the convexity of the Hamiltonian with respect to the adjoint variable is required.

\begin{assumption}  \label{Ass_5}
For any $ \: (t, x) \, \in \, [0, T] \times \mathbb{R}^n, \: $ the reduction
$ \: \mathbb{R}^n \ni p \: \longmapsto \: \mathcal{H}(t, x, p) \: $ is convex.
\end{assumption}

Let $ \: (t, x) \, \in \, [0, T] \times \mathbb{R}^n $. For $ \mathcal{H}(t, x, \cdot) $, introduce the conjugate function
(convex dual)
\begin{equation}
\mathbb{R}^n \: \ni \: f \:\: \longmapsto \:\: \mathcal{H}^*(t, x, f) \:\: \stackrel{\mathrm{def}}{=} \:\:
\sup_{p \, \in \, \mathbb{R}^n} \{ \left< f, p \right> \: - \: \mathcal{H}(t, x, p) \}
\label{F_9}
\end{equation}
and its effective domain
\begin{equation}
\mathrm{dom} \: \mathcal{H}^*(t, x, \cdot) \:\: = \:\: \left\{ f \in \mathbb{R}^n \: \colon \:
\mathcal{H}^*(t, x, f) \, < \, +\infty \right\}.  \label{F_10}
\end{equation}
By virtue of the condition~(\ref{F_3}) and Assumption~\ref{Ass_5}, the set~(\ref{F_10}) is nonempty, bounded and
convex (recall the affine support properties of convex functions \cite{Rockafellar1970,Rockafellar1974}).

For the sake of simplicity, we also use the next technical assumption, even though it is in fact not essential
\cite[\S II.10.5]{Subbotin1995}.

\begin{assumption}  \label{Ass_6}
There exists a continuous function
$$
[0, T] \times \mathbb{R}^n \: \ni \: (t, x) \:\: \longmapsto \:\: \beta(t, x) \: \in \: \mathbb{R}
$$
such that
$$
\mathcal{H}^*(t, x, f) \: \leqslant \: \beta(t, x) \quad \forall \:
f \: \in \: \mathrm{dom} \: \mathcal{H}^*(t, x, \cdot) \quad
\forall \: (t, x) \: \in \: [0, T] \times \mathbb{R}^n.
$$
\end{assumption}

Consider the differential inclusion
\begin{equation}
\begin{aligned}
\left( \dot{x}(t), \, \dot{z}(t) \right) \:\: \in \:\: \{ (f, g) \: \in \: \mathbb{R}^n \times \mathbb{R} \: \colon \:
& f \: \in \: \mathrm{dom} \: \mathcal{H}^*(t, x(t), \cdot), \\
& \mathcal{H}^*(t, x(t), f) \: \leqslant \: g \: \leqslant \: \beta(t, x(t)) \}.
\end{aligned}  \label{F_12}
\end{equation}
For $ \: (t_0, x_0) \, \in \, [0, T] \times \mathbb{R}^n, \: $ let $ \mathcal{S}(t_0, x_0) $ be the set of its trajectories
$$
[0, T] \: \ni \: t \:\: \longmapsto \:\: (x(t), z(t)) \: \in \: \mathbb{R}^n \times \mathbb{R}
$$
satisfying the conditions
\begin{equation}
x(t_0) \, = \, x_0, \quad z(T) \, = \, \sigma(x(T)),  \label{F_13}
\end{equation}
and introduce the functional
\begin{equation}
\gamma(t_0, \, x(\cdot), \, z(\cdot)) \:\: \stackrel{\mathrm{def}}{=} \:\: \sigma(x(T)) \: - \:
\int\limits_{t_0}^T \dot{z}(t) \, dt \quad \forall \: (x(\cdot), \, z(\cdot)) \: \in \: \mathcal{S}(t_0, x_0).
\label{F_14}
\end{equation}

\begin{theorem}{\rm \cite[\S II.10.5]{Subbotin1995}}  \label{Thm_7}
Under Assumptions~{\rm \ref{Ass_1}, \ref{Ass_5}, \ref{Ass_6},} the function
\begin{equation}
V(t_0, x_0) \:\: \stackrel{\mathrm{def}}{=} \:\: \max_{(x(\cdot), \, z(\cdot)) \: \in \: \mathcal{S}(t_0, x_0)}
\gamma(t_0, \, x(\cdot), \, z(\cdot)) \quad \forall \: (t_0, x_0) \: \in \: [0, T] \times \mathbb{R}^n
\label{F_15}
\end{equation}
is the unique minimax solution of the Cauchy problem~{\rm (\ref{F_1_1}),~(\ref{F_1_2})}.
\end{theorem}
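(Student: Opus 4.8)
The plan is to recognise the right-hand side of~(\ref{F_15}) as the value of a Bolza-type variational problem and then to identify this value with the unique minimax solution supplied by Theorem~\ref{Thm_2}, using the minimax--viscosity equivalence of Remark~\ref{Rem_3}.

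First I would simplify the functional~(\ref{F_14}). Every admissible pair $ (x(\cdot), z(\cdot)) \in \mathcal{S}(t_0, x_0) $ satisfies $ z(T) = \sigma(x(T)) $ by~(\ref{F_13}), so $ \int_{t_0}^T \dot z(t)\,dt = z(T) - z(t_0) = \sigma(x(T)) - z(t_0) $ and hence $ \gamma(t_0, x(\cdot), z(\cdot)) = z(t_0) $. Since the differential inclusion~(\ref{F_12}) imposes $ \dot z(t) \geqslant \mathcal{H}^*(t, x(t), \dot x(t)) $, maximising $ z(t_0) = \sigma(x(T)) - \int_{t_0}^T \dot z(t)\,dt $ forces the optimal choice $ \dot z(t) = \mathcal{H}^*(t, x(t), \dot x(t)) $, so that~(\ref{F_15}) reduces to
\[
V(t_0, x_0) \:\: = \:\: \sup_{x(\cdot)} \left[ \sigma(x(T)) \: - \: \int_{t_0}^T \mathcal{H}^*(t, x(t), \dot x(t))\,dt \right],
\]
the supremum being taken over absolutely continuous $ x(\cdot) $ with $ x(t_0) = x_0 $ and $ \dot x(t) \in \mathrm{dom}\,\mathcal{H}^*(t, x(t), \cdot) $ for almost every~$ t $.

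Next I would settle well-posedness. The sets $ \mathrm{dom}\,\mathcal{H}^*(t, x, \cdot) $ are nonempty, bounded and convex (as noted after~(\ref{F_10})), with radius controlled by $ C_1(1 + \| x \|) $ through~(\ref{F_3}); therefore admissible trajectories have uniformly bounded velocities and remain in a compact set on $ [t_0, T] $. Assumption~\ref{Ass_6} bounds $ \mathcal{H}^* $ from above, and its lower semicontinuity together with convexity in the velocity argument yields upper semicontinuity of the integral functional, so the direct method gives existence of a maximiser and continuity of $ V $ on $ [0, T] \times \mathbb{R}^n $. At $ t_0 = T $ the integral is empty and $ x(T) = x_0 $, whence $ V(T, x_0) = \sigma(x_0) $, which is the terminal condition~(\ref{F_1_2}).

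The decisive step is to show that $ V $ is a viscosity solution of~(\ref{F_1_1}). The reduced representation yields the dynamic programming principle
\[
V(t_0, x_0) \:\: = \:\: \sup_{x(\cdot)} \left[ V(\tau, x(\tau)) \: - \: \int_{t_0}^\tau \mathcal{H}^*(t, x(t), \dot x(t))\,dt \right], \qquad \tau \in [t_0, T].
\]
Testing against a smooth $ \varphi $ that touches the graph of $ V $ from above and from below and letting $ \tau \downarrow t_0 $ produces, respectively, the super- and subsolution inequalities, in which the quantity $ \sup_{f}\{ \left< f, \mathrm{D}_x \varphi \right> - \mathcal{H}^*(t, x, f) \} $ appears; by Assumption~\ref{Ass_5} the map $ \mathcal{H}(t, x, \cdot) $ is convex and continuous, hence equal to its biconjugate $ \mathcal{H}^{**}(t, x, \cdot) $, so this supremum equals $ \mathcal{H}(t, x, \mathrm{D}_x \varphi) $ and the two inequalities become exactly the defining conditions for a viscosity solution of~(\ref{F_1_1}). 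I expect this duality bookkeeping --- attaining or approximating the Fenchel supremum within the bounded admissible velocity set and justifying the passage $ \tau \downarrow t_0 $ along near-optimal trajectories --- to be the main obstacle, since it is precisely where Assumptions~\ref{Ass_1}, \ref{Ass_5} and~\ref{Ass_6} must be combined; an alternative is to construct, for each fixed $ p $, the weak-invariance trajectory required by the definition of a minimax solution by selecting $ \dot x(t) $ from the subdifferential $ \partial_p \mathcal{H}(t, x(t), p) $, for which the Fenchel--Young relation gives $ \dot z(t) = \left< \dot x(t), p \right> - \mathcal{H}(t, x(t), p) = \mathcal{H}^*(t, x(t), \dot x(t)) $. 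Once $ V $ is identified as a viscosity solution, the uniqueness in Theorem~\ref{Thm_2} and the equivalence recorded in Remark~\ref{Rem_3} force $ V $ to coincide with the unique minimax solution of~(\ref{F_1_1}),~(\ref{F_1_2}), completing the argument.
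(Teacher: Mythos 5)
The paper contains no proof of Theorem~\ref{Thm_7} to compare against: it is imported verbatim from \cite[\S II.10.5]{Subbotin1995}, so any argument you give is necessarily ``a different route.'' Judged on its own terms, your outline is sound and is the route one would take from the viscosity side of the literature rather than Subbotin's intrinsic minimax one. Your reduction is correct: $\gamma(t_0,x(\cdot),z(\cdot)) = z(t_0)$, the inclusion~(\ref{F_12}) is optimally saturated at $\dot z(t) = \mathcal{H}^*(t,x(t),\dot x(t))$ (admissible precisely because Assumption~\ref{Ass_6} keeps this choice below $\beta$), and (\ref{F_3}) confines $\mathrm{dom}\,\mathcal{H}^*(t,x,\cdot)$ to the ball of radius $C_1(1+\|x\|)$, so (\ref{F_15}) becomes a Bolza problem with bounded velocities. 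The decisive duality step is also right: since $\mathcal{H}(t,x,\cdot)$ is convex and finite (hence continuous), Fenchel--Moreau gives $\mathcal{H}^{**}=\mathcal{H}$, which converts the dynamic-programming inequalities into the viscosity inequalities for (\ref{F_7}); Remark~\ref{Rem_3} and the uniqueness in Theorem~\ref{Thm_2} then identify $V$ with the minimax solution. By contrast, Subbotin's proof never leaves the minimax framework: it verifies directly that the graph of the value function of the inclusion problem is weakly invariant with respect to the upper and lower characteristic inclusions --- your ``alternative'' suggestion (measurable selection $\dot x(t) \in \partial_p \mathcal{H}(t,x(t),p)$ plus Fenchel--Young, which is exactly the construction the paper's definition of a minimax solution asks for) is in essence one half of that argument. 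His route buys independence from viscosity comparison theory; yours buys shorter bookkeeping at the price of invoking the minimax--viscosity equivalence and viscosity uniqueness as black boxes, which the paper does make available.

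Two debts in your sketch deserve naming, though both are standard rather than fatal. First, continuity of $V$ on $[0,T]\times\mathbb{R}^n$ is asserted, not proved; the direct method gives attainment of the maximum for fixed $(t_0,x_0)$, but continuity in the initial position requires a Filippov/Gronwall-type stability estimate for the inclusion, using (\ref{F_5}), and it is needed both for $V$ to qualify as a minimax/viscosity solution and for your dynamic programming argument. Second, in the compactness step you must check that weak limits of admissible arcs remain admissible, i.e.\ $\dot x(t) \in \mathrm{dom}\,\mathcal{H}^*(t,x(t),\cdot)$ a.e.\ in the limit; this follows from weak lower semicontinuity of $\int \mathcal{H}^*$ together with the uniform bound $\int \mathcal{H}^* \leqslant \int \beta < +\infty$, but it is not automatic since the effective domains need not be closed. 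You correctly flagged the duality and limit bookkeeping as the locus of the real work.
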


\begin{remark}  \label{Rem_8}  \rm
If convexity is replaced with concavity in Assumption~{\rm \ref{Ass_5},} then a similar characterization of the minimax
solution of {\rm (\ref{F_1_1}), (\ref{F_1_2})} can be obtained{\rm ,} and minimization appears instead of maximization
in {\rm (\ref{F_15})}. \qed
\end{remark}

Now impose some smoothness conditions on the Hamiltonian and terminal function.

\begin{assumption}  \label{Ass_9}
The Hamiltonian~{\rm (\ref{F_2})} and terminal function~{\rm (\ref{F_6})} are continuously differentiable{\rm ,} and
the derivatives $ \mathrm{D}_{tx} \mathcal{H} ${\rm ,} $ \mathrm{D}_{tp} \mathcal{H} $ exist for all
$ \: (t, x, p) \, \in \, [0, T] \times \mathbb{R}^n \times \mathbb{R}^n $.
\end{assumption}

The next statement can be derived by using basic results of convex analysis \cite{Rockafellar1970,Rockafellar1974}.

\begin{proposition}  \label{Pro_10}
Under Assumptions~{\rm \ref{Ass_1}, \ref{Ass_5}, \ref{Ass_6}, \ref{Ass_9},} the effective domain~{\rm (\ref{F_10})}
can be represented as
\begin{equation}
\begin{aligned}
& \mathrm{dom} \: \mathcal{H}^*(t, x, \cdot) \:\: = \:\: \mathrm{conv} \:
\{ \mathrm{D}_p \mathcal{H}(t, x, \psi) \: \colon \: \psi \in \mathbb{R}^n \} \\
& \forall \: (t, x) \: \in \: [0, T] \times \mathbb{R}^n,
\end{aligned}  \label{F_10_1}
\end{equation}
and the following formula also holds{\rm :}
\begin{equation}
\begin{aligned}
& \mathcal{H}^*(t, \, x, \, \mathrm{D}_p \mathcal{H}(t, x, \psi)) \:\: = \:\:
\left< \psi, \mathrm{D}_p \mathcal{H}(t, x, \psi) \right> \: - \: \mathcal{H}(t, x, \psi) \\
& \forall \: (t, x, \psi) \: \in \: [0, T] \times \mathbb{R}^n \times \mathbb{R}^n.
\end{aligned}  \label{F_10_2}
\end{equation}
\end{proposition}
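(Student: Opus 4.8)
The plan is to fix $ (t, x) \in [0, T] \times \mathbb{R}^n $ throughout and work with the reduction $ h := \mathcal{H}(t, x, \cdot) $. By Assumptions~\ref{Ass_5} and~\ref{Ass_9} this is a finite-valued, convex, continuously differentiable function on $ \mathbb{R}^n $, and the estimate~(\ref{F_3}) makes it globally Lipschitz with constant $ L := C_1 (1 + \| x \|) $. Hence $ h $ is a closed proper convex function, so $ h^{**} = h $, and its subdifferential $ \partial h(\psi) = \{ \mathrm{D}_p \mathcal{H}(t, x, \psi) \} $ is a singleton for every $ \psi $. A standard coercivity computation (for $ \| f \| > L $ one sends $ p = s f / \| f \| $ with $ s \to +\infty $ in~(\ref{F_9}), using $ h(p) \leqslant h(0) + L \| p \| $) shows that $ \mathrm{dom}\,\mathcal{H}^*(t, x, \cdot) \subseteq \{ f : \| f \| \leqslant L \} $, so this effective domain is a bounded convex set, which together with Assumption~\ref{Ass_6} gives the compact, uniformly bounded structure exploited later.

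Formula~(\ref{F_10_2}) is then immediate from the Fenchel--Young equality. Since $ h $ is differentiable, $ \mathrm{D}_p \mathcal{H}(t, x, \psi) \in \partial h(\psi) $, so the supremum defining $ \mathcal{H}^*(t, x, \mathrm{D}_p \mathcal{H}(t, x, \psi)) $ in~(\ref{F_9}) is attained at $ p = \psi $, which yields
$$
\mathcal{H}^*(t, \, x, \, \mathrm{D}_p \mathcal{H}(t, x, \psi)) \:\: = \:\: \langle \psi, \, \mathrm{D}_p \mathcal{H}(t, x, \psi) \rangle \: - \: \mathcal{H}(t, x, \psi),
$$
and in particular confirms $ \mathrm{D}_p \mathcal{H}(t, x, \psi) \in \mathrm{dom}\,\mathcal{H}^*(t, x, \cdot) $ for every $ \psi $.

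For~(\ref{F_10_1}) the key tool is the inversion of subdifferentials for closed proper convex functions \cite{Rockafellar1970}, which in the present differentiable setting states that $ f = \mathrm{D}_p \mathcal{H}(t, x, \psi) $ for some $ \psi $ if and only if $ \mathcal{H}^*(t, x, \cdot) $ admits a subgradient at $ f $ (namely such a $ \psi $). Thus the range $ \{ \mathrm{D}_p \mathcal{H}(t, x, \psi) : \psi \in \mathbb{R}^n \} $ is exactly the set of subdifferentiability points of $ \mathcal{H}^*(t, x, \cdot) $. Since a convex function is subdifferentiable throughout the relative interior of its effective domain, one obtains the chain
$$
\mathrm{ri}\,\mathrm{dom}\,\mathcal{H}^*(t, x, \cdot) \:\: \subseteq \:\: \{ \mathrm{D}_p \mathcal{H}(t, x, \psi) : \psi \in \mathbb{R}^n \} \:\: \subseteq \:\: \mathrm{dom}\,\mathcal{H}^*(t, x, \cdot).
$$
Because $ \mathrm{dom}\,\mathcal{H}^*(t, x, \cdot) $ is convex, passing to convex hulls places $ \mathrm{conv}\,\{ \mathrm{D}_p \mathcal{H}(t, x, \psi) : \psi \in \mathbb{R}^n \} $ between its relative interior and the whole effective domain, so these two convex sets share the same relative interior and the same closure.

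The step I expect to be the main obstacle is precisely the relative-boundary analysis hidden in that last sentence: the sandwich only yields equality up to relative-boundary points, since a point of $ \mathrm{dom}\,\mathcal{H}^*(t, x, \cdot) $ need not be an exact gradient value but merely a limit of such values. For the scalar Hamiltonian $ \mathcal{H} = \sqrt{1 + p^2} $, for instance, the gradient range is the open interval $ (-1, 1) $ whereas the effective domain is $ [-1, 1] $. Reaching the stated set equality therefore requires either reading $ \mathrm{conv} $ in~(\ref{F_10_1}) as the closed convex hull, or invoking the uniform bound of Assumption~\ref{Ass_6} together with boundedness of the effective domain to verify that no relative-boundary point is lost; carrying out this boundary bookkeeping, and making explicit which closure convention renders~(\ref{F_10_1}) literally exact, is the delicate part, while the inclusions above are routine consequences of convex duality.
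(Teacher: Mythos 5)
Your argument is sound, and the obstacle you flag at the end is not a defect of your proof but of the proposition itself. There is no detailed proof in the paper to compare against: the statement is dispatched with the single remark that it ``can be derived by using basic results of convex analysis'' and a citation to \cite{Rockafellar1970,Rockafellar1974}, and the route you take is exactly the intended one --- Fenchel--Young for (\ref{F_10_2}) (since $p = \psi$ attains the supremum in (\ref{F_9}) when $f = \mathrm{D}_p\mathcal{H}(t,x,\psi)$), and, for (\ref{F_10_1}), the inversion rule $f \in \partial h(\psi) \Leftrightarrow \psi \in \partial h^*(f)$ combined with subdifferentiability of a convex function on the relative interior of its effective domain, giving the sandwich $\mathrm{ri}\,\mathrm{dom}\,\mathcal{H}^*(t,x,\cdot) \subseteq \{\mathrm{D}_p\mathcal{H}(t,x,\psi) : \psi \in \mathbb{R}^n\} \subseteq \mathrm{dom}\,\mathcal{H}^*(t,x,\cdot)$. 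Your treatment of (\ref{F_10_2}) is complete and correct, and it already yields the inclusion of the gradient range in the effective domain.

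More importantly, your counterexample is valid and shows that (\ref{F_10_1}) is literally false under the stated hypotheses: $\mathcal{H}(t,x,p) = \sqrt{1+\|p\|^2}$ satisfies Assumption~\ref{Ass_1} (it is $1$-Lipschitz in $p$, so (\ref{F_3}) holds), Assumption~\ref{Ass_5}, Assumption~\ref{Ass_6} with $\beta \equiv 0$ (since $\mathcal{H}^*(t,x,f) = -\sqrt{1-\|f\|^2}$ on its domain), and Assumption~\ref{Ass_9}, yet $\mathrm{dom}\,\mathcal{H}^*(t,x,\cdot)$ is the closed unit ball while the gradient range (already convex) is the open unit ball --- and the paper's own notation section defines $\mathrm{conv}$ as the plain convex hull, with no closure. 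The repair is the one you indicate: replace $\mathrm{conv}$ by the closed convex hull. Indeed, under Assumption~\ref{Ass_6} the effective domain is closed (it is bounded by your Lipschitz estimate, and if $f_k \to f$ with $f_k$ in the domain, lower semicontinuity of the conjugate gives $\mathcal{H}^*(t,x,f) \leqslant \liminf_k \mathcal{H}^*(t,x,f_k) \leqslant \beta(t,x) < +\infty$), so the identity $\mathrm{cl}\,\mathrm{ri}\,C = \mathrm{cl}\,C$ for convex $C$ turns your sandwich into the corrected equality. Finally, this flaw does not propagate into the paper's main results: the only use of (\ref{F_10_1}) is (\ref{F_27_4}) in the proof of Theorem~\ref{Thm_13}, which needs just the inclusion of the gradient range in the effective domain, and that follows from (\ref{F_10_2}) alone.
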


For any $ y \in \mathbb{R}^n $, let
\begin{equation}
[0, T] \: \ni \: t \:\: \longmapsto \:\:
\left( \hat{x}(t; \, y), \: \hat{p}(t; \, y), \: \hat{z}(t; \, y) \right) \: \in \:
\mathbb{R}^n \times \mathbb{R}^n \times \mathbb{R}  \label{F_16}
\end{equation}
be the solution of the characteristic system
\begin{equation}
\left\{ \begin{aligned}
& \dot{\hat{x}}(t; \, y) \:\: = \:\: \mathrm{D}_p \mathcal{H} \left( t, \: \hat{x}(t; \, y), \: \hat{p}(t; \, y) \right), \\
& \dot{\hat{p}}(t; \, y) \:\: = \:\: -\mathrm{D}_x \mathcal{H} \left( t, \: \hat{x}(t; \, y), \: \hat{p}(t; \, y) \right), \\
& \dot{\hat{z}}(t; \, y) \:\: = \:\: \left< \hat{p}(t; \, y), \:
\mathrm{D}_p \mathcal{H} \left( t, \: \hat{x}(t; \, y), \: \hat{p}(t; \, y) \right) \right> \:\: - \:\:
\mathcal{H} \left( t, \: \hat{x}(t; \, y), \: \hat{p}(t; \, y) \right),
\end{aligned} \right.  \label{F_17}
\end{equation}
such that
\begin{equation}
\hat{x}(T; \, y) \: = \: y, \quad \hat{p}(T; \, y) \: = \: \mathrm{D} \sigma(y), \quad \hat{z}(T; \, y) \: = \: \sigma(y).
\label{F_18}
\end{equation}
Define the set
\begin{equation}
Y(t_0, x_0) \:\: \stackrel{\mathrm{def}}{=} \:\: \left\{ y \in \mathbb{R}^n \: \colon \:
\hat{x}(t_0; \, y) \: = \: x_0 \right\} \quad \forall \: (t_0, x_0) \: \in \: [0, T] \times \mathbb{R}^n.  \label{F_19}
\end{equation}

\begin{theorem}{\rm \cite[\S II.10.6]{Subbotin1995}}  \label{Thm_11}
Under Assumptions~{\rm \ref{Ass_1}, \ref{Ass_5}, \ref{Ass_6}, \ref{Ass_9},} the function
\begin{equation}
V(t_0, x_0) \:\: = \:\: \max_{y \: \in \: Y(t_0, x_0)} \hat{z}(t_0; \, y) \quad
\forall \: (t_0, x_0) \: \in \: [0, T] \times \mathbb{R}^n
\label{F_20}
\end{equation}
is the unique minimax solution of the Cauchy problem~{\rm (\ref{F_1_1}),~(\ref{F_1_2})}.
\end{theorem}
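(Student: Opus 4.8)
The plan is to reduce the characteristics formula~(\ref{F_20}) to the trajectory formula~(\ref{F_15}) already established in Theorem~\ref{Thm_7}. The bridge is the observation that each characteristic curve is a \emph{distinguished} trajectory of the differential inclusion~(\ref{F_12}): the one on which the lower constraint for $\dot z$ is active. First I would rewrite the $\hat z$-dynamics. Substituting $\dot{\hat x}(t;y) = \mathrm{D}_p \mathcal{H}(t, \hat x(t;y), \hat p(t;y))$ from the first line of~(\ref{F_17}) into the third line and invoking the conjugacy identity~(\ref{F_10_2}) of Proposition~\ref{Pro_10}, I obtain
$$
\dot{\hat z}(t; y) \;=\; \mathcal{H}^*\!\bigl(t, \hat x(t; y), \dot{\hat x}(t; y)\bigr), \qquad t \in [0, T].
$$
Together with $\dot{\hat x}(t;y) \in \mathrm{dom}\,\mathcal{H}^*(t,\hat x(t;y),\cdot)$, guaranteed by~(\ref{F_10_1}), and the terminal relation $\hat z(T;y)=\sigma(y)=\sigma(\hat x(T;y))$ from~(\ref{F_18}), this shows that every characteristic with $y \in Y(t_0, x_0)$ yields an admissible pair $(\hat x(\cdot;y),\hat z(\cdot;y)) \in \mathcal{S}(t_0, x_0)$.

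Evaluating the functional~(\ref{F_14}) on such a pair and using the fundamental theorem of calculus together with $\hat z(T;y)=\sigma(\hat x(T;y))$ gives the telescoping identity $\gamma(t_0, \hat x(\cdot;y), \hat z(\cdot;y)) = \hat z(t_0; y)$. Hence $\max_{y \in Y(t_0,x_0)} \hat z(t_0;y) \le V(t_0, x_0)$ by~(\ref{F_15}). For the reverse inequality I would show that a maximizing trajectory in~(\ref{F_15}) --- which exists by Theorem~\ref{Thm_7} --- must itself be a characteristic. The same telescoping computation gives $\gamma(t_0, x(\cdot), z(\cdot)) = z(t_0)$ for every admissible pair (using $z(T)=\sigma(x(T))$), and since lowering $\dot z$ can only raise $z(t_0)$, any maximizer must saturate the lower bound, i.e.\ $\dot z = \mathcal{H}^*(t, x, \dot x)$. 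The problem therefore collapses to maximizing $\sigma(x(T)) - \int_{t_0}^T \mathcal{H}^*(t, x(t), \dot x(t))\, dt$ over arcs with $x(t_0)=x_0$ and $\dot x(t) \in \mathrm{dom}\,\mathcal{H}^*(t, x(t), \cdot)$.

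Applying Pontryagin's maximum principle to this reduced problem, with adjoint $p$, the pointwise maximization of $\langle p, v\rangle - \mathcal{H}^*(t, x, v)$ over $v$ returns the biconjugate $\mathcal{H}^{**}(t,x,p) = \mathcal{H}(t,x,p)$ (equality holds because $\mathcal{H}(t,x,\cdot)$ is convex and continuous by Assumptions~\ref{Ass_1} and~\ref{Ass_5}), the optimal velocity is $\dot x = \mathrm{D}_p\mathcal{H}(t,x,p)$, and the envelope relation $\mathrm{D}_x \mathcal{H}^*(t,x,\dot x) = -\mathrm{D}_x \mathcal{H}(t,x,p)$ converts the adjoint equation into $\dot p = -\mathrm{D}_x \mathcal{H}(t,x,p)$; the free terminal state yields the transversality condition $p(T) = \mathrm{D}\sigma(x(T))$. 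These are exactly~(\ref{F_17})--(\ref{F_18}), so the maximizer coincides with a characteristic whose terminal state $y := x(T)$ lies in $Y(t_0,x_0)$, and therefore $V(t_0,x_0) = \hat z(t_0;y) \le \max_{y \in Y(t_0,x_0)} \hat z(t_0;y)$. Combining the two inequalities yields~(\ref{F_20}).

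The main obstacle is the upper-bound step: rigorously justifying that an optimal arc of the reduced variational problem satisfies the characteristic necessary conditions and that these reproduce precisely~(\ref{F_17})--(\ref{F_18}). This is where the smoothness of Assumption~\ref{Ass_9} is needed, to make $\mathrm{D}_p\mathcal{H}$ single-valued and to validate the envelope identity $\mathrm{D}_x \mathcal{H}^* = -\mathrm{D}_x \mathcal{H}$ at matched $(\dot x, p)$, while Assumption~\ref{Ass_6} keeps the functional finite and the inclusion~(\ref{F_12}) nondegenerate. Some care is also required because $\mathcal{H}^*(t,x,\cdot)$ may fail to be differentiable on the boundary of its effective domain, so the maximizing velocity should be characterized through the subdifferential correspondence $v \in \partial_p \mathcal{H}(t,x,p) \Leftrightarrow p \in \partial_v \mathcal{H}^*(t,x,v)$ rather than by naive differentiation of $\mathcal{H}^*$.
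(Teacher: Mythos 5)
The paper itself does not prove Theorem~\ref{Thm_11}: it quotes it from \cite[\S II.10.6]{Subbotin1995} and attributes the representation to \cite{Mirica1985,Subbotina1991,Subbotina1992}. The closest in-paper argument is the proof of Theorem~\ref{Thm_13}, and the first half of your proposal coincides with it exactly: verifying via (\ref{F_10_1}) and (\ref{F_10_2}) that every characteristic is a trajectory of the differential inclusion (\ref{F_12}), telescoping the functional (\ref{F_14}) to $\hat{z}(t_0;\,y)$, and invoking Theorem~\ref{Thm_7} is the computation (\ref{F_27_1})--(\ref{F_27_5}) carried out there (with Cauchy-problem characteristics in place of boundary-value ones). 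Your reduction of the converse inequality to the variational problem of maximizing $\sigma(x(T)) - \int_{t_0}^T \mathcal{H}^*(t,x(t),\dot{x}(t))\,dt$ over arcs with $\dot{x}(t) \in \mathrm{dom}\:\mathcal{H}^*(t,x(t),\cdot)$ is also sound, since $\gamma \equiv z(t_0)$ on $\mathcal{S}(t_0,x_0)$ and saturating the lower bound on $\dot{z}$ can only increase $z(t_0)$.

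The genuine gap is the step ``maximizer $\Rightarrow$ characteristic.'' The maximum principle you invoke (in its classical form, cf.\ Theorem~\ref{Thm_18}) does not apply to the reduced problem: its control set $\mathrm{dom}\:\mathcal{H}^*(t,x,\cdot)$ is state-dependent, and its running cost $\mathcal{H}^*$ is extended-real-valued and in general not differentiable in $x$ anywhere. The envelope identity $\mathrm{D}_x\mathcal{H}^*(t,x,\dot{x}) = -\mathrm{D}_x\mathcal{H}(t,x,p)$, on which your adjoint equation rests, is not merely delicate at the boundary of the effective domain --- it can fail to be defined at all under the stated hypotheses. Take $\mathcal{H}(t,x,p) = \left< f(x), p \right>$ with $f$ continuously differentiable, locally Lipschitz and linearly bounded: Assumptions~\ref{Ass_1}, \ref{Ass_5}, \ref{Ass_6}, \ref{Ass_9} all hold (with $\beta \equiv 0$), yet $\mathrm{dom}\:\mathcal{H}^*(t,x,\cdot) = \{f(x)\}$ is a single point moving with $x$, and $x \mapsto \mathcal{H}^*(t,x,v)$ equals $+\infty$ off the level set $\{x \colon f(x) = v\}$, so $\mathrm{D}_x \mathcal{H}^*$ does not exist. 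The subdifferential correspondence you propose repairs only the $v$-side of the duality, not the $x$-side. The standard way to close this gap is to use Hamiltonian-form necessary conditions for generalized Bolza problems (Rockafellar, Clarke), namely $(-\dot{p}(t), \dot{x}(t)) \in \partial_{(x,p)}\mathcal{H}(t, x(t), p(t))$ together with the transversality condition $p(T) = \mathrm{D}\sigma(x(T))$; since $\mathcal{H}$ --- unlike $\mathcal{H}^*$ --- is smooth by Assumption~\ref{Ass_9}, this inclusion collapses exactly to (\ref{F_17}), (\ref{F_18}). This is a genuinely different tool from the classical Pontryagin principle, and without it (or an equivalent argument) your upper bound does not go through in the stated generality. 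Note also that identifying the resulting extremal with $\left(\hat{x}(\cdot;\,y), \hat{p}(\cdot;\,y), \hat{z}(\cdot;\,y)\right)$ for $y = x(T)$ requires uniqueness of solutions of (\ref{F_17}) backward from the data (\ref{F_18}), which continuous differentiability of $\mathcal{H}$ alone does not guarantee and which the definition (\ref{F_16}) implicitly presupposes; a complete proof must address this as well.
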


Such a representation was obtained first in \cite{Mirica1985} and independently in \cite{Subbotina1991,Subbotina1992}.

\begin{remark}  \label{Rem_12}  \rm
If convexity is replaced with concavity in Assumption~{\rm \ref{Ass_5},} then the formula~{\rm (\ref{F_20})} is rewritten as
\begin{equation}
V(t_0, x_0) \:\: = \:\: \min_{y \: \in \: Y(t_0, x_0)} \hat{z}(t_0; \, y) \quad
\forall \: (t_0, x_0) \: \in \: [0, T] \times \mathbb{R}^n
\label{F_21}
\end{equation}
{\rm (}minimization appears instead of maximization{\rm )}. \qed
\end{remark}

\begin{remark}  \label{Rem_12_0}  \rm
For numerical purposes{\rm ,} it is reasonable to parametrize characteristic fields not with respect to the terminal state but
with respect to the initial adjoint vector, so that Cauchy problems can be solved instead of boundary value problems. Indeed{\rm ,}
the latter may have multiple solutions{\rm ,} leading to the practical dilemma of finding a solution that provides the optimal cost.
The uniqueness of solutions of Cauchy problems avoids this dilemma. \qed
\end{remark}

The following result specifies the mentioned parametrization of characteristic fields with respect to the initial adjoint vector.

\begin{theorem}  \label{Thm_13}
Let Assumptions~{\rm \ref{Ass_1}, \ref{Ass_5}, \ref{Ass_6}, \ref{Ass_9}} hold. For any
$ \: (t_0, x_0, p_0) \, \in \, [0, T) \times \mathbb{R}^n \times \mathbb{R}^n, \: $ let
\begin{equation}
[t_0, T] \: \ni \: t \:\: \longmapsto \:\:
\left( \tilde{x}(t; \, t_0, x_0, p_0), \: \tilde{p}(t; \, t_0, x_0, p_0) \right) \: \in \: \mathbb{R}^n \times \mathbb{R}^n
\label{F_23}
\end{equation}
be the solution of the characteristic system
\begin{equation}
\left\{ \begin{aligned}
& \dot{\tilde{x}}(t; \, t_0, x_0, p_0) \:\: = \:\:
\mathrm{D}_p \mathcal{H} \left( t, \: \tilde{x}(t; \, t_0, x_0, p_0), \: \tilde{p}(t; \, t_0, x_0, p_0) \right), \\
& \dot{\tilde{p}}(t; \, t_0, x_0, p_0) \:\: = \:\:
-\mathrm{D}_x \mathcal{H} \left( t, \: \tilde{x}(t; \, t_0, x_0, p_0), \: \tilde{p}(t; \, t_0, x_0, p_0) \right), \\
& t \in [t_0, T],
\end{aligned} \right.  \label{F_24}
\end{equation}
such that
\begin{equation}
\tilde{x}(t_0; \, t_0, x_0, p_0) \: = \: x_0, \quad \tilde{p}(t_0; \, t_0, x_0, p_0) \: = \: p_0.  \label{F_25}
\end{equation}
Then the function defined by
\begin{equation}
\begin{aligned}
& V(T, x_0) \: = \: \sigma(x_0), \\
& V(t_0, x_0) \:\: = \:\: \max_{p_0 \, \in \, \mathbb{R}^n} \:
\left\{ \sigma \left( \tilde{x}(T; \, t_0, x_0, p_0) \right) \:\: -
{}^{{}^{{}^{{}^{{}^{{}^{{}^{{}^{{}^{}}}}}}}}} \right. \\
& \qquad\quad
- \:\: \int\limits_{t_0}^T \: \left( \left< \tilde{p}(t; \, t_0, x_0, p_0), \:
\mathrm{D}_p \mathcal{H} \left( t, \: \tilde{x}(t; \, t_0, x_0, p_0), \: \tilde{p}(t; \, t_0, x_0, p_0) \right) \right> \:\: -
\right. \\
& \qquad\qquad\qquad\qquad\qquad\quad \,\,
\left. {}^{{}^{{}^{{}^{{}^{{}^{{}^{{}^{{}^{}}}}}}}}}
\left. - \:\:
\mathcal{H} \left( t, \: \tilde{x}(t; \, t_0, x_0, p_0), \: \tilde{p}(t; \, t_0, x_0, p_0) \right) \right) \: dt \right\} \\
& \forall \: (t_0, x_0) \: \in \: [0, T) \times \mathbb{R}^n
\end{aligned}  \label{F_26}
\end{equation}
is the unique minimax solution of the Cauchy problem~{\rm (\ref{F_1_1}),~(\ref{F_1_2})}. If{\rm ,} moreover{\rm ,} the third
characteristic equation in {\rm (\ref{F_17})} does not explicitly depend on the adjoint variable{\rm ,} i.\,e.{\rm ,}
\begin{equation}
\mathcal{H}(t, x, \psi) \:\: = \:\: \left< \psi, \, \mathrm{D}_p \mathcal{H}(t, x, \psi) \right> \: + \: \eta(t, x) \quad
\forall \: (t, x, \psi) \: \in \: [0, T] \times \mathbb{R}^n \times \mathbb{R}^n  \label{F_22}
\end{equation}
for some function $ \: \eta \, \colon \, [0, T] \times \mathbb{R}^n \, \to \, \mathbb{R}, \: $ and the Hamiltonian~{\rm (\ref{F_2})}
satisfies the conditions
\begin{equation}
\begin{aligned}
& \mathrm{D}_x \mathcal{H} (t, x, \alpha \psi) \: = \: \alpha \, \mathrm{D}_x \mathcal{H} (t, x, \psi), \quad
\mathrm{D}_p \mathcal{H} (t, x, \alpha \psi) \: = \: \mathrm{D}_p \mathcal{H} (t, x, \psi) \\
& \forall \alpha > 0 \quad \forall \: (t, x, \psi) \: \in \: [0, T] \times \mathbb{R}^n \times \mathbb{R}^n,
\end{aligned}  \label{F_27_0}
\end{equation}
then the finite-dimensional maximization in {\rm (\ref{F_26})} can be performed over the union of the unit sphere and origin
in~$ \mathbb{R}^n ${\rm :}
\begin{equation}
\begin{aligned}
& V(t_0, x_0) \:\: = \:\: \max_{\substack{p_0 \, \in \, \mathbb{R}^n \: \colon \\
\| p_0 \| \, = \, 1 \:\: \mbox{\scriptsize \rm or} \:\: p_0 \, = \, 0}}
\left\{  \sigma \left( \tilde{x}(T; \, t_0, x_0, p_0) \right) \: + \:
\int\limits_{t_0}^T \eta \left( t, \, \tilde{x}(t; \, t_0, x_0, p_0) \right) \, dt \right\} \\
& \forall \: (t_0, x_0) \: \in \: [0, T) \times \mathbb{R}^n.
\end{aligned}  \label{F_27}
\end{equation}
\end{theorem}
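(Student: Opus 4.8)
The plan is to show that the right-hand side of (\ref{F_26}) coincides with the minimax solution already characterized in Theorems~\ref{Thm_7} and~\ref{Thm_11}; since Theorem~\ref{Thm_2} guarantees uniqueness, this identification at once yields that (\ref{F_26}) is the unique minimax solution. Write $W(t_0, x_0, p_0)$ for the expression inside the outer maximum in (\ref{F_26}). The first observation is that the first two equations of (\ref{F_17}) are literally the characteristic system (\ref{F_24}), so the curves $(\tilde{x}, \tilde{p})$ and $(\hat{x}, \hat{p})$ are trajectories of one and the same Hamiltonian flow, differing only in whether the data are prescribed at $t_0$ or at $T$. Moreover, by the Legendre identity (\ref{F_10_2}) of Proposition~\ref{Pro_10}, along any such characteristic the integrand satisfies $\langle \tilde{p}, \mathrm{D}_p \mathcal{H}\rangle - \mathcal{H} = \mathcal{H}^*(t, \tilde{x}, \dot{\tilde{x}})$, so that $W(t_0, x_0, p_0) = \sigma(\tilde{x}(T)) - \int_{t_0}^T \mathcal{H}^*(t, \tilde{x}(t), \dot{\tilde{x}}(t))\, dt$. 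I would then prove the two inequalities separately.

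\emph{Upper bound.} For \emph{every} $p_0 \in \mathbb{R}^n$ I would exhibit the characteristic as a feasible trajectory of the differential inclusion (\ref{F_12}). Define $z(t) := \sigma(\tilde{x}(T)) - \int_t^T (\langle \tilde{p}, \mathrm{D}_p \mathcal{H}\rangle - \mathcal{H})\, ds$, so that $z(T) = \sigma(\tilde{x}(T))$ and $\dot{z} = \mathcal{H}^*(t, \tilde{x}, \dot{\tilde{x}})$. Since $\dot{\tilde{x}} = \mathrm{D}_p \mathcal{H} \in \mathrm{dom}\, \mathcal{H}^*(t, \tilde{x}, \cdot)$ by (\ref{F_10_1}), and $\dot{z}$ attains the lower bound in (\ref{F_12}) while $\dot z = \mathcal H^* \leqslant \beta(t,\tilde x)$ by Assumption~\ref{Ass_6}, the pair $(\tilde{x}(\cdot), z(\cdot))$ lies in $\mathcal{S}(t_0, x_0)$. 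Theorem~\ref{Thm_7} then gives $V(t_0, x_0) \geqslant \gamma(t_0, \tilde{x}(\cdot), z(\cdot)) = z(t_0) = W(t_0, x_0, p_0)$, whence $\sup_{p_0} W(t_0, x_0, p_0) \leqslant V(t_0, x_0)$. This is the delicate step and the main obstacle: the inequality must hold for \emph{all} $p_0$, including those whose characteristic fails the terminal transversality condition $\tilde{p}(T) = \mathrm{D}\sigma(\tilde{x}(T))$ and hence corresponds to no terminal-parametrized characteristic of (\ref{F_17})--(\ref{F_18}). The differential-inclusion formulation of Theorem~\ref{Thm_7}, together with the Legendre identity (\ref{F_10_2}), resolves this by recognizing every characteristic as an admissible---generally suboptimal---trajectory, so that enlarging the maximization from $\{\hat{p}(t_0; y) : y \in Y(t_0, x_0)\}$ to all of $\mathbb{R}^n$ cannot raise the value.

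\emph{Lower bound.} By Theorem~\ref{Thm_11} there is a maximizer $y^* \in Y(t_0, x_0)$ with $V(t_0, x_0) = \hat{z}(t_0; y^*)$; set $p_0^* := \hat{p}(t_0; y^*)$. The terminal-parametrized curve $(\hat{x}(\cdot; y^*), \hat{p}(\cdot; y^*))$ and the initial-parametrized curve $(\tilde{x}(\cdot; t_0, x_0, p_0^*), \tilde{p}(\cdot; t_0, x_0, p_0^*))$ take the same value $(x_0, p_0^*)$ at $t = t_0$ and solve the same system, so by uniqueness of the associated Cauchy problem they coincide on $[t_0, T]$. Integrating the $\hat{z}$-equation backward from $\hat{z}(T; y^*) = \sigma(y^*) = \sigma(\tilde{x}(T))$ yields $\hat{z}(t_0; y^*) = W(t_0, x_0, p_0^*)$, hence $V(t_0, x_0) = W(t_0, x_0, p_0^*) \leqslant \sup_{p_0} W(t_0, x_0, p_0)$. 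Combining the two bounds gives equality, shows the supremum is attained at $p_0^*$ (so the maximum in (\ref{F_26}) is justified), and therefore identifies (\ref{F_26}) with the unique minimax solution.

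\emph{The homogeneous reduction.} For the final claim I would use (\ref{F_22}) to simplify $\langle \tilde{p}, \mathrm{D}_p \mathcal{H}\rangle - \mathcal{H} = -\eta(t, \tilde{x})$, turning $W$ into $\sigma(\tilde{x}(T)) + \int_{t_0}^T \eta(t, \tilde{x}(t))\, dt$ as in (\ref{F_27}). The scaling relations (\ref{F_27_0}) imply that, for any $\alpha > 0$, the pair $(\tilde{x}(\cdot; t_0, x_0, p_0), \alpha\, \tilde{p}(\cdot; t_0, x_0, p_0))$ solves (\ref{F_24}) with initial adjoint $\alpha p_0$; by uniqueness this forces $\tilde{x}(t; t_0, x_0, \alpha p_0) = \tilde{x}(t; t_0, x_0, p_0)$. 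Thus the state characteristic, and hence $W$, depends on a nonzero $p_0$ only through its direction $p_0/\|p_0\|$, so the maximum over $\mathbb{R}^n$ reduces to the maximum over $\{p_0 = 0\} \cup \{\|p_0\| = 1\}$, the origin being retained as a separate representative since its direction is undefined (one checks $\tilde{p}(\cdot; t_0, x_0, 0) \equiv 0$, as (\ref{F_27_0}) forces $\mathrm{D}_x \mathcal{H}(t, x, 0) = 0$). This gives (\ref{F_27}) and completes the argument.
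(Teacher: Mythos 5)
Your proposal is correct and takes essentially the same route as the paper's own proof: the upper bound by showing every Cauchy-parametrized characteristic, paired with the integral term as a $z$-component, is a trajectory of the differential inclusion~(\ref{F_12}) via the formulae~(\ref{F_10_1}),~(\ref{F_10_2}) of Proposition~\ref{Pro_10} and then invoking Theorem~\ref{Thm_7}; the lower bound by identifying the boundary-value characteristics of Theorem~\ref{Thm_11} as a subfamily of the Cauchy-parametrized ones (which the paper compresses into its ``wider characteristic field'' remark); and the same positive-homogeneity scaling argument for the reduction to the unit sphere and origin. Your version merely spells out the lower bound and the scaling step in more detail than the paper does, with no substantive difference in method.
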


\begin{proof}
The second part of the theorem is a direct corollary to the first part under the conditions~(\ref{F_22}) and (\ref{F_27_0}).
Indeed, (\ref{F_22}) yields that $ \: \tilde{p}(\cdot; \, t_0, x_0, p_0) \: $ does not explicitly appear in
the expression for the maximized functional in (\ref{F_26}), while (\ref{F_27_0}) guarantees that the right-hand
side of the adjoint system is positive homogeneous of degree~$ 1 $ with respect to the adjoint variable and that
the state components of the characteristic curves do not change after multiplying $ p_0 $ by any positive number.

Hence, it remains to establish the first part of the theorem. Compared to the boundary value problems~(\ref{F_17}),~(\ref{F_18})
in Theorem~\ref{Thm_11}, the Cauchy problems~(\ref{F_24}),~(\ref{F_25}) generate a wider characteristic field (due to the absence of
the terminal condition on the adjoint variable). Let $ \: (t_0, x_0, p_0) \, \in \, [0, T) \times \mathbb{R}^n \times \mathbb{R}^n \: $
and denote
\begin{equation}
\begin{aligned}
& \tilde{z}(t; \, t_0, x_0, p_0) \:\: \stackrel{\mathrm{def}}{=} \:\:
\sigma \left( \tilde{x}(T; \, t_0, x_0, p_0) \right) \:\: - \\
& \qquad\quad
- \:\: \int\limits_t^T \: \left( \left< \tilde{p}(s; \, t_0, x_0, p_0), \:
\mathrm{D}_p \mathcal{H} \left( s, \: \tilde{x}(s; \, t_0, x_0, p_0), \: \tilde{p}(s; \, t_0, x_0, p_0) \right) \right> \:\: -
\right. \\
& \qquad\qquad\qquad\qquad\qquad\qquad\quad
\left. - \:\:
\mathcal{H} \left( s, \: \tilde{x}(s; \, t_0, x_0, p_0), \: \tilde{p}(s; \, t_0, x_0, p_0) \right) \right) \: ds \\
& \forall t \in [t_0, T]
\end{aligned}  \label{F_27_1}
\end{equation}
(if $ t = t_0 $, this is the expression for the maximized functional in (\ref{F_26})). We have
\begin{equation}
\tilde{z}(T; \, t_0, x_0, p_0) \:\: = \:\: \sigma \left( \tilde{x}(T; \, t_0, x_0, p_0) \right)  \label{F_27_2}
\end{equation}
and
\begin{equation}
\begin{aligned}
& \dot{\tilde{z}}(t; \, t_0, x_0, p_0) \:\: = \:\: \left< \tilde{p}(t; \, t_0, x_0, p_0), \:
\mathrm{D}_p \mathcal{H} \left( t, \: \tilde{x}(t; \, t_0, x_0, p_0), \: \tilde{p}(t; \, t_0, x_0, p_0) \right) \right> \:\: - \\
& \qquad\qquad\qquad\qquad\qquad\qquad\qquad\qquad \:
- \:\: \mathcal{H} \left( t, \: \tilde{x}(t; \, t_0, x_0, p_0), \: \tilde{p}(t; \, t_0, x_0, p_0) \right) \\
& \forall t \in [t_0, T].
\end{aligned}  \label{F_27_3}
\end{equation}
Consider also the solution~(\ref{F_23}) to (\ref{F_24}),~(\ref{F_25}). According to Theorem~\ref{Thm_7}, it suffices to show that
$ \: \left( \tilde{x}(\cdot; \, t_0, x_0, p_0), \, \tilde{z}(\cdot; \, t_0, x_0, p_0) \right) \: $ is a solution of
the differential inclusion~(\ref{F_12}) almost everywhere on $ [t_0, T] $ (the related initial and terminal conditions~(\ref{F_13})
are trivially satisfied by virtue of (\ref{F_25}) and (\ref{F_27_2})). The formulae~(\ref{F_10_1}) and (\ref{F_10_2}) in
Proposition~{\ref{Pro_10}} can be used for this purpose. From (\ref{F_24}) and (\ref{F_10_1}), we get
\begin{equation}
\begin{aligned}
\dot{\tilde{x}}(t; \, t_0, x_0, p_0) \:\: & = \:\:
\mathrm{D}_p \mathcal{H} \left( t, \: \tilde{x}(t; \, t_0, x_0, p_0), \: \tilde{p}(t; \, t_0, x_0, p_0) \right) \\
& \in \:\: \mathrm{dom} \: \mathcal{H}^* \left( t, \: \tilde{x}(t; \, t_0, x_0, p_0), \: \cdot \right)
\end{aligned}  \label{F_27_4}
\end{equation}
for almost every $ t \in [t_0, T] $. Due to (\ref{F_27_3}) and (\ref{F_10_2}), we obtain
\begin{equation}
\dot{\tilde{z}}(t; \, t_0, x_0, p_0) \:\: = \:\: \mathcal{H}^* \left( t, \:\, \tilde{x}(t; \, t_0, x_0, p_0), \:\,
\mathrm{D}_p \mathcal{H} \left( t, \: \tilde{x}(t; \, t_0, x_0, p_0), \: \tilde{p}(t; \, t_0, x_0, p_0) \right) \right)
\label{F_27_5}
\end{equation}
for all $ t \in [t_0, T] $. Finally, the sought-after property directly follows from (\ref{F_27_4}) and (\ref{F_27_5}).
\end{proof}

\begin{remark}  \label{Rem_14}  \rm
The representation~{\rm (\ref{F_22})} is typical for many optimal control problems with smooth Hamiltonians
{\rm (}see Section~{\rm 3)}. \qed
\end{remark}

\begin{remark}  \label{Rem_14_0}  \rm
For optimal control problems with smooth Hamiltonians{\rm ,} the second condition in {\rm (\ref{F_27_0})} appears to be
rather strict{\rm ,} but it allows an extension to a wide class of optimal control problems with Mayer cost functionals and
nonsmooth Hamiltonians {\rm (}see Theorem~{\rm \ref{Thm_20})}. \qed
\end{remark}

\section{A curse-of-dimensionality-free characteristics approach for solving Hamilton--Jacobi--Bellman equations in \\
optimal control problems}

Let $ G $ and $ U $ be sets in the state and control spaces, respectively. Consider the control system
\begin{equation}
\left\{ \begin{aligned}
& \dot{x}(t) \: = \: f(t, x(t), u(t)), \quad t \in [t_0, T], \\
& x(t_0) \, = \, x_0 \, \in \, G \:\:\, \mbox{is fixed}, \\
& T \in (0, +\infty) \:\:\, \mbox{and} \:\:\, t_0 \in [0, T) \:\:\, \mbox{are fixed}, \\
& u(\cdot) \, \in \, \mathcal{U}_{t_0, \, T} \, , \\
& \mathcal{U}_{t_0, \, T} \:\: \mbox{is the class of measurable functions defined on $ [t_0, T] $ with values in} \:\: U,
\end{aligned} \right.  \label{F_28}
\end{equation}
and the optimization criterion
\begin{equation}
\begin{aligned}
J_{t_0, \, T, \, x_0}(u(\cdot)) \:\: \stackrel{\mathrm{def}}{=} \:\: & \sigma(x(T; \, t_0, x_0, u(\cdot))) \: + \\
& + \: \int\limits_{t_0}^T \eta(t, \: x(t; \, t_0, x_0, u(\cdot)), \: u(t)) \: dt \:\: \longrightarrow \:\:
\inf_{u(\cdot) \: \in \: \mathcal{U}_{t_0, \, T}} \, ,
\end{aligned}  \label{F_29}
\end{equation}
where $ \: x(\cdot; \, t_0, x_0, u(\cdot)) \: $ denotes a solution to the given Cauchy problem for the system of ordinary
differential equations with a control function $ u(\cdot) $. Define the value function of interest by
\begin{equation}
\begin{aligned}
& V(T, x_0) \: \stackrel{\mathrm{def}}{=} \: \sigma(x_0), \\
& V(t_0, x_0) \: \stackrel{\mathrm{def}}{=} \: \inf_{u(\cdot) \: \in \: \mathcal{U}_{t_0, \, T}} J_{t_0, \, T, \, x_0}(u(\cdot)) \\
& \forall \: (t_0, x_0) \: \in \: [0, T) \times G.
\end{aligned}  \label{F_30}
\end{equation}

First, let us formulate a general existence result and first-order necessary optimality conditions, i.\,e.,
Pontryagin's principle \cite{Pontryagin1964}. Some basic assumptions are adopted.

\begin{assumption}  \label{Ass_15}
The following properties hold{\rm :}
\begin{list}{\rm \arabic{count})}%
{\usecounter{count}}
\item  $ U \subseteq \mathbb{R}^m $ is a closed set in the control space{\rm ;}
\item  $ G $ and $ G_1 $ are open domains in the state space~$ \mathbb{R}^n ${\rm ,} and $ G_1 $ contains
the closure~$ \bar{G} $ of~$ G ${\rm ;}
\item  the functions
\begin{equation}
\arraycolsep=1.5pt
\def\arraystretch{2}
\begin{array}{c}
[0, T] \times G_1 \times U \: \ni \: (t, x, u) \:\: \longmapsto \:\: f(t, x, u) \: \in \: \mathbb{R}^n, \\ \relax
[0, T] \times G_1 \times U \: \ni \: (t, x, u) \:\: \longmapsto \:\: \eta(t, x, u) \: \in \: \mathbb{R}, \\
G_1 \: \ni \: x \:\: \longmapsto \:\: \sigma(x) \: \in \: \mathbb{R}
\end{array}  \label{F_31}
\end{equation}
are continuous{\rm ;}
\item  $ G $ is a strongly invariant domain in the state space for the control system~{\rm (\ref{F_28}),} i.\,e.{\rm ,}
$ \: (t_0, x_0) \, \in \, [0, T) \times G \: $ and $ \, u(\cdot) \in \mathcal{U}_{t_0, \, T} \, $ imply that any
corresponding state trajectory $ \, x(\cdot; \, t_0, x_0, u(\cdot)) \, $ defined on a subinterval of $ [t_0, T] $
stays inside $ G $ and cannot reach the boundary~$ \partial G $ {\rm (}$ G = \mathbb{R}^n $ is a trivial example of
a strongly invariant domain{\rm );}
\item  there exist an~$ a \in G $ and positive constants~$ A_1, A_2 $ such that
\begin{equation}
\| f(t, a, u) \|_{\mathbb{R}^n} \: \leqslant \: A_1, \quad |\eta(t, a, u)| \: \leqslant \: A_2 \quad
\forall \: (t, u) \: \in \: [0, T] \times U;  \label{F_31_0}
\end{equation}
\item  if $ U $ is unbounded{\rm ,} then there exists a constant~$ C_1 > 0 $ and a modulus of continuity
$ \: \omega_1 \, \colon \, [0, +\infty) \to [0, +\infty) \: $ such that
\begin{equation}
\begin{aligned}
& \| f(t, x', u') \: - \: f(t, x'', u'') \|_{\mathbb{R}^n} \:\: \leqslant \:\: C_1 \, \| x' - x'' \|_{\mathbb{R}^n} \: + \:
\omega_1 \left( \| u' - u'' \|_{\mathbb{R}^m} \right) \\
& \forall t \in [0, T] \quad \forall \: x', x'' \, \in \, \bar{G} \quad \forall \: u', u'' \, \in \, U;
\end{aligned}  \label{F_31_1}
\end{equation}
\item  if $ U $ is bounded{\rm ,} then the condition~{\rm (\ref{F_31_1})} is relaxed so that there exists
a constant~$ C_1 > 0 $ satisfying
\begin{equation}
\begin{aligned}
& \| f(t, x', u) \: - \: f(t, x'', u) \|_{\mathbb{R}^n} \:\: \leqslant \:\: C_1 \, \| x' - x'' \|_{\mathbb{R}^n} \\
& \forall t \in [0, T] \quad \forall \: x', x'' \, \in \, \bar{G} \quad \forall u \in U;
\end{aligned}  \label{F_31_2}
\end{equation}
\item  if $ U $ is unbounded{\rm ,} then there exist positive constants~$ C_2, C_3 $ and a modulus of continuity
$ \: \omega_2 \, \colon \, [0, +\infty) \to [0, +\infty) \: $ such that
\begin{equation}
\begin{aligned}
& |\eta(t, x', u') \: - \: \eta(t, x'', u'')| \:\: \leqslant \:\: C_2 \, \| x' - x'' \|_{\mathbb{R}^n} \: + \:
\omega_2 \left( \| u' - u'' \|_{\mathbb{R}^m} \right), \\
& |\sigma(x') \: - \: \sigma(x'')| \:\: \leqslant \:\: C_3 \, \| x' - x'' \|_{\mathbb{R}^n} \\
& \forall t \in [0, T] \quad \forall \: x', x'' \, \in \, \bar{G} \quad \forall \: u', u'' \, \in \, U;
\end{aligned}  \label{F_31_3}
\end{equation}
\item  if $ U $ is bounded{\rm ,} then the conditions~{\rm (\ref{F_31_3})} are relaxed so that{\rm ,} for any compact
set~$ K \subseteq \bar{G} ${\rm ,} there exist positive numbers~$ C_{2, K}, C_{3, K} $ satisfying
\begin{equation}
\begin{aligned}
& |\eta(t, x', u) \: - \: \eta(t, x'', u)| \:\: \leqslant \:\: C_{2, K} \, \| x' - x'' \|_{\mathbb{R}^n}, \\
& |\sigma(x') \: - \: \sigma(x'')| \:\: \leqslant \:\: C_{3, K} \, \| x' - x'' \|_{\mathbb{R}^n} \\
& \forall t \in [0, T] \quad \forall \: x', x'' \, \in \, K \quad \forall \: u \in U;
\end{aligned}  \label{F_31_4}
\end{equation}
\item  the set
$$
[f, \eta](t, x, U) \:\: \stackrel{\mathrm{def}}{=} \:\: \{ (f(t, x, u), \, \eta(t, x, u)) \: \colon \: u \in U \} \:\: \subseteq \:\:
\mathbb{R}^{n + 1}
$$
is convex and closed for all $ \: (t, x) \, \in \, [0, T] \times G_1 ${\rm ;}
\item  the infimum in {\rm (\ref{F_30})} is finite for every initial position $ \: (t_0, x_0) \, \in \, [0, T) \times G $.
\end{list}
\end{assumption}

\begin{remark}  \label{Rem_15_0}  \rm
For any $ t_0 \in [0, T) $ and $ X_0 \subseteq G ${\rm ,} let $ \: \mathcal{W}(t_0, T, X_0) \subseteq \mathbb{R}^{n + 1} \: $ be
the integral funnel of the system~{\rm (\ref{F_28})} for all $ \, x(t_0) = x_0 \in X_0 \, $ and for all
$ u(\cdot) \in \mathcal{U}_{t_0, \, T} $ on the time interval~$ [t_0, T] ${\rm ,} i.\,e.{\rm ,}
\begin{equation}
\mathcal{W}(t_0, T, X_0) \:\: \stackrel{\mathrm{def}}{=} \:\: \{ (t, \: x(t; \, t_0, x_0, u(\cdot))) \: \colon \:
t \in [t_0, T], \:\: x_0 \in X_0, \:\: u(\cdot) \, \in \, \mathcal{U}_{t_0, \, T} \}.  \label{F_31_11}
\end{equation}
If $ X_0 = \{ x_0 \} $ is singleton{\rm ,} let us write $ \mathcal{W}(t_0, T, x_0) $ instead of $ \mathcal{W}(t_0, T, \{ x_0 \}) $.
Note that the last item in Assumption~{\rm \ref{Ass_15}} directly follows from the previous items in such cases as{\rm :}
\begin{itemize}
\item  boundedness below of the functions~$ \sigma, \eta ${\rm ;}
\item  boundedness of the funnel~$ \mathcal{W}(t_0, T, x_0) $ for all $ \: (t_0, x_0) \, \in \, [0, T) \times G \: $
together with either boundedness of $ U $ or boundedness below of $ \eta $ or independence of $ \eta $ from $ u $.
\end{itemize}
\qed
\end{remark}

\begin{assumption}  \label{Ass_17}
The following properties hold{\rm :}
\begin{list}{\rm \arabic{count})}%
{\usecounter{count}}
\item  the functions~{\rm (\ref{F_31})} are continuously differentiable with respect to $ x \in G_1 $ for all
$ \: (t, u) \, \in \, [0, T] \times U ${\rm ;}
\item  if $ U $ is unbounded{\rm ,} then there exist moduli of continuity
$ \: \omega_i \, \colon \, [0, +\infty) \to [0, +\infty) ${\rm ,} $ i = \overline{3, 5}, \: $ such that
\begin{equation}
\begin{aligned}
& \| \mathrm{D}_x f(t, x', u') \: - \: \mathrm{D}_x f(t, x'', u'') \|_{\mathbb{R}^{n \times n}} \:\: \leqslant \:\:
\omega_3 \left( \| x' - x'' \|_{\mathbb{R}^n} \: + \: \| u' - u'' \|_{\mathbb{R}^m} \right), \\
& \| \mathrm{D}_x \eta(t, x', u') \: - \: \mathrm{D}_x \eta(t, x'', u'') \|_{\mathbb{R}^n} \:\: \leqslant \:\:
\omega_4 \left( \| x' - x'' \|_{\mathbb{R}^n} \: + \: \| u' - u'' \|_{\mathbb{R}^m} \right), \\
& \| \mathrm{D} \sigma(x') \: - \: \mathrm{D} \sigma(x'') \|_{\mathbb{R}^n} \:\: \leqslant \:\:
\omega_5 \left( \| x' - x'' \|_{\mathbb{R}^n} \right) \\
& \forall t \in [0, T] \quad \forall \: x', x'' \, \in \, \bar{G} \quad \forall \: u', u'' \, \in \, U;
\end{aligned}  \label{F_31_5}
\end{equation}
\item  if $ U $ is bounded{\rm ,} then the conditions~{\rm (\ref{F_31_5})} are relaxed so that{\rm ,} for any compact
set~$ K \subseteq \bar{G} ${\rm ,} there exist moduli of continuity
$ \: \omega_{i, K} \, \colon \, [0, +\infty) \to [0, +\infty) ${\rm ,} $ i = \overline{3, 5}, \: $ satisfying
\begin{equation}
\begin{aligned}
& \| \mathrm{D}_x f(t, x', u') \: - \: \mathrm{D}_x f(t, x'', u'') \|_{\mathbb{R}^{n \times n}} \:\: \leqslant \:\:
\omega_{3, K} \left( \| x' - x'' \|_{\mathbb{R}^n} \: + \: \| u' - u'' \|_{\mathbb{R}^m} \right), \\
& \| \mathrm{D}_x \eta(t, x', u') \: - \: \mathrm{D}_x \eta(t, x'', u'') \|_{\mathbb{R}^n} \:\: \leqslant \:\:
\omega_{4, K} \left( \| x' - x'' \|_{\mathbb{R}^n} \: + \: \| u' - u'' \|_{\mathbb{R}^m} \right), \\
& \| \mathrm{D} \sigma(x') \: - \: \mathrm{D} \sigma(x'') \|_{\mathbb{R}^n} \:\: \leqslant \:\:
\omega_{5, K} \left( \| x' - x'' \|_{\mathbb{R}^n} \right) \\
& \forall t \in [0, T] \quad \forall \: x', x'' \, \in \, K \quad \forall \: u', u'' \, \in \, U.
\end{aligned}  \label{F_31_6}
\end{equation}
\end{list}
\end{assumption}

Assumptions~\ref{Ass_15} and \ref{Ass_17} contain somewhat relaxed versions of the conditions that were imposed in
\cite{YongZhou1999} for establishing an existence theorem and Pontryagin's principle for deterministic optimal control
problems. The next remark explains the validity of these relaxations.

\begin{remark}  \label{Rem_15_1}  \rm
In {\rm \cite[\S 2.5.1, \S 3.2, \S 4.2]{YongZhou1999},} the case $ G = \mathbb{R}^n $ is considered{\rm ,} and the Lipschitz type
conditions~{\rm (\ref{F_31_1}), (\ref{F_31_3}), (\ref{F_31_5})} are imposed for all $ \: x', x'' \, \in \, \mathbb{R}^n $.
In fact{\rm ,} a strongly invariant domain~$ G $ can be taken into account if it exists. Then the reasonings of
{\rm \cite[\S 2.5.1, \S 3.2, \S 4.2]{YongZhou1999}} can still be used. Only the items~{\rm 7,9} of
Assumption~{\rm \ref{Ass_15}} and the item~{\rm 3} of Assumption~{\rm \ref{Ass_17}} need an additional clarification.
Let $ U $ be bounded. First{\rm ,} note that{\rm ,} since $ f $ is uniformly continuous on every compact subset of
$ \, [0, T] \times G_1 \times U \, $ {\rm (}due to the item~{\rm 3} of Assumption~{\rm \ref{Ass_15}),}
the conditions~{\rm (\ref{F_31_2}), (\ref{F_31_4})} imply the following relaxations of the conditions on $ f, \eta $ in
{\rm (\ref{F_31_1}), (\ref{F_31_3}):} for any compact set~$ K \subseteq \bar{G} ${\rm ,} there exist
moduli of continuity $ \: \omega_{i, K} \, \colon \, [0, +\infty) \to [0, +\infty) ${\rm ,} $ i = 1,2, \: $ satisfying
\begin{equation}
\begin{aligned}
& \| f(t, x', u') \: - \: f(t, x'', u'') \|_{\mathbb{R}^n} \:\: \leqslant \:\: C_1 \, \| x' - x'' \|_{\mathbb{R}^n} \: + \:
\omega_{1, K} \left( \| u' - u'' \|_{\mathbb{R}^m} \right), \\
& |\eta(t, x', u') \: - \: \eta(t, x'', u'')| \:\: \leqslant \:\: C_{2, K} \, \| x' - x'' \|_{\mathbb{R}^n} \: + \:
\omega_{2, K} \left( \| u' - u'' \|_{\mathbb{R}^m} \right) \\
& \forall t \in [0, T] \quad \forall \: x', x'' \, \in \, K \quad \forall \: u', u'' \, \in \, U.
\end{aligned}  \label{F_31_7}
\end{equation}
Next{\rm ,} it is reasonable to relax the conditions~{\rm (\ref{F_31_3}), (\ref{F_31_5})} if
the functions~$ \: \eta, \, \sigma, \, \mathrm{D}_x f, \, \mathrm{D}_x \eta, \, \mathrm{D}_x \sigma \: $ are not
necessarily uniformly continuous with respect to $ x \in \bar{G} $. In order to justify the replacement of
{\rm (\ref{F_31_1}), (\ref{F_31_3}), (\ref{F_31_5})} with {\rm (\ref{F_31_2}), (\ref{F_31_4}), (\ref{F_31_6}),} it suffices
to prove the boundedness of the integral funnel~$ \mathcal{W}(t_0, T, K_0) $ for any $ t_0 \in [0, T) $ and any bounded
set~$ K_0 \subseteq G $ {\rm (}see Remark~{\rm \ref{Rem_15_0}} for the definition of $ \mathcal{W}(t_0, T, K_0) ${\rm )}.
Then the projection of this funnel on the state space is a bounded subset of $ G $ {\rm (}according to the item~{\rm 4} of
Assumption~{\rm \ref{Ass_15})}. Let us verify the sought-for property. From the conditions~{\rm (\ref{F_31_0})} and
{\rm (\ref{F_31_2}),} we derive the existence of a constant~$ C_4 > 0 $ such that
\begin{equation}
\| f(t, x', u') \| \: \leqslant \: C_4 \, (1 + \| x' \|) \quad \forall \: (t, x', u') \: \in \: [0, T] \times \bar{G} \times U
\label{F_31_9}
\end{equation}
and{\rm ,} consequently{\rm ,}
$$
\left| \left< x', \, f(t, x', u') \right> \right| \: \leqslant \: C_4 \, \| x' \| \, (1 + \| x' \|) \quad
\forall \: (t, x', u') \: \in \: [0, T] \times \bar{G} \times U.
$$
Hence{\rm ,} one can choose a constant~$ C_5 > 0 $ satisfying
\begin{equation}
\left| \left< x', \, f(t, x', u') \right> \right| \: \leqslant \: C_5 \, \left( 1 + \| x' \|^2 \right) \quad
\forall \: (t, x', u') \: \in \: [0, T] \times \bar{G} \times U.  \label{F_31_8}
\end{equation}
Now take an arbitrary $ \: (t_0, x_0) \, \in \, [0, T) \times G \: $ with an arbitrary
$ \, u(\cdot) \in \mathcal{U}_{t_0, \, T} \, $ and consider a solution $ \: x(\cdot) \, = \, x(\cdot; \, t_0, x_0, u(\cdot)) \: $
of {\rm (\ref{F_28})} defined on a subinterval $ I \subseteq [t_0, T] $ {\rm (}$ t_0 \in I ${\rm )}. By virtue of
{\rm (\ref{F_31_8}),} we have
$$
\frac{d}{dt} \left( 1 \, + \, \| x(t) \|^2 \right) \:\: = \:\: 2 \, \left< x(t), \, f(t, x(t), u(t)) \right> \:\: \leqslant \:\:
2 \, C_5 \, \left( 1 \, + \, \| x(t) \|^2 \right)
$$
almost everywhere on $ [t_0, T] $. Therefore{\rm ,}
\begin{equation}
\begin{aligned}
& 1 \, + \, \| x(t) \|^2 \:\: \leqslant \:\: \left( 1 \, + \, \| x_0 \|^2 \right) \: e^{2 \, C_5 \, (t - t_0)}, \\
& \| x(t) \| \:\: \leqslant \:\: \sqrt{1 \, + \, \| x_0 \|^2} \: e^{C_5 \, (t - t_0)} \:\: \leqslant \:\:
\sqrt{1 \, + \, \| x_0 \|^2} \: e^{C_5 \, T} \\
& \forall t \in [t_0, T].
\end{aligned}  \label{F_31_10}
\end{equation}
This yields the sought-for statement. One can also see that{\rm ,} in case of an unbounded $ U ${\rm ,} {\rm (\ref{F_31_0})} and
{\rm (\ref{F_31_1})} imply {\rm (\ref{F_31_9})} with some constant~$ C_4 > 0 ${\rm ,} and the same subsequent reasonings again
lead to {\rm (\ref{F_31_10})}. Thus{\rm ,} Remark~{\rm \ref{Rem_15_1}} can be simplified as follows{\rm :} the last item in
Assumption~{\rm \ref{Ass_15}} is a corollary to the previous items either if $ U $ is bounded or if $ \eta $ is bounded below or
if $ \eta $ does not depend on $ u $. \qed
\end{remark}

\begin{theorem}{\rm \cite[\S 2.5.1]{YongZhou1999}}  \label{Thm_16}
Let Assumption~{\rm \ref{Ass_15}} hold with a fixed time horizon~$ T \in (0, +\infty) $. Then{\rm ,} for any fixed initial
position $ \: (t_0, x_0) \, \in \, [0, T) \times G, \: $ there exists an optimal control in
the problem~{\rm (\ref{F_28}), (\ref{F_29})}.
\end{theorem}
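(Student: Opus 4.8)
The plan is to prove existence by the direct method combined with the Filippov--Cesari closure argument, in which the convexity hypothesis (item~10 of Assumption~\ref{Ass_15}) plays the decisive role. First I would reformulate the problem as a differential inclusion. Introduce the augmented state $ y = (x, z) \in \mathbb{R}^{n+1} $ with $ \dot z(t) = \eta(t, x(t), u(t)) $ and $ z(t_0) = 0 $, so that $ J_{t_0, T, x_0}(u(\cdot)) = \sigma(x(T)) + z(T) $. Along any admissible control the pair obeys
$$ (\dot x(t), \dot z(t)) \in Q(t, x(t)) \stackrel{\mathrm{def}}{=} [f, \eta](t, x(t), U) \quad \text{a.e. on } [t_0, T], $$
and by item~10 the sets $ Q(t, x) $ are convex and closed. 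Since the infimum in (\ref{F_30}) is finite (item~11), pick a minimizing sequence $ u_k(\cdot) \in \mathcal{U}_{t_0, T} $ with $ J_{t_0, T, x_0}(u_k(\cdot)) \to V(t_0, x_0) $, and let $ (x_k, z_k) $ be the corresponding augmented trajectories.

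Next I would establish compactness of the state trajectories. The a priori estimate (\ref{F_31_10}) confines every $ x_k(\cdot) $ to the fixed ball $ \{\, \|x\| \le R \,\} $ with $ R = \sqrt{1 + \|x_0\|^2}\, e^{C_5 T} $; intersecting with $ \bar G $ yields a compact $ K \subseteq \bar G $. The sublinear growth (\ref{F_31_9}) then gives $ \|\dot x_k(t)\| \le C_4(1 + R) $ uniformly in $ k $, so $ \{x_k\} $ is uniformly bounded and equi-Lipschitz; by Arzel\`a--Ascoli a subsequence converges uniformly to some Lipschitz $ x^*(\cdot) $ with $ x^*(t_0) = x_0 $. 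For the running-cost component, convergence of $ J_{t_0, T, x_0}(u_k(\cdot)) $ and boundedness of $ \sigma $ on $ K $ bound $ z_k(T) $; when $ U $ is bounded, $ \eta $ is bounded on $ [0, T] \times K \times U $, so $ \{z_k\} $ is equi-Lipschitz as well, whereas for unbounded $ U $ one passes to the upper closure $ Q^{\uparrow}(t, x) = \{\, (v, w) : (v, w') \in Q(t, x) \text{ for some } w' \le w \,\} $, still convex and closed, to retain lower semicontinuity of the cost.

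The crux is the closure step: showing that $ (x^*, z^*) $ is itself a trajectory of the inclusion, i.e. $ (\dot x^*(t), \dot z^*(t)) \in Q(t, x^*(t)) $ a.e. Here the velocities $ \dot y_k $ are bounded and (after the upper-closure reduction, via Dunford--Pettis) uniformly integrable, so a subsequence converges weakly in $ L^1 $ to $ \dot y^* $; Mazur's lemma produces convex combinations converging strongly and hence, along a further subsequence, pointwise a.e. Because $ x_k \to x^* $ uniformly and $ Q(\cdot, \cdot) $ is closed with convex values, an upper-semicontinuity argument places the a.e. limit of these convex combinations into $ Q(t, x^*(t)) $. I expect this closure argument---making the convexity and closedness of $ [f, \eta](t, x, U) $ do precisely the work of guaranteeing that weak limits of admissible velocities remain admissible---to be the main obstacle, with the unbounded-control case requiring the extra uniform-integrability bookkeeping.

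Finally I would recover an optimal control and conclude. Once $ (\dot x^*(t), \dot z^*(t)) \in [f, \eta](t, x^*(t), U) $ a.e., Filippov's measurable selection lemma furnishes a measurable $ u^*(\cdot) \in \mathcal{U}_{t_0, T} $ with $ \dot x^*(t) = f(t, x^*(t), u^*(t)) $ and $ \dot z^*(t) = \eta(t, x^*(t), u^*(t)) $ a.e.; strong invariance (item~4) keeps $ x^*(\cdot) $ inside $ G $, so $ u^* $ is admissible. By continuity of $ \sigma $ and the lower-semicontinuity secured above, $ \sigma(x^*(T)) + z^*(T) \le \liminf_k J_{t_0, T, x_0}(u_k(\cdot)) = V(t_0, x_0) $, while $ V(t_0, x_0) $ is the infimum, whence $ J_{t_0, T, x_0}(u^*(\cdot)) = V(t_0, x_0) $ and $ u^* $ is optimal.
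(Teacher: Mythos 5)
Your proof is correct and is essentially the paper's own route: the paper establishes Theorem~\ref{Thm_16} by invoking the Filippov-type direct-method argument of \cite[\S 2.5.1]{YongZhou1999} (minimizing sequence, Arzel\`a--Ascoli compactness of trajectories, weak $L^1$ convergence of velocities plus Mazur's lemma, the convexity/closedness of $[f, \eta](t, x, U)$ from item~10, and a measurable selection to recover the control), with Remark~\ref{Rem_15_1} supplying exactly the a priori funnel estimates (\ref{F_31_9}), (\ref{F_31_10}) that you also use to confine the minimizing trajectories to a compact subset of $\bar{G}$. The only inessential detour is your upper-closure/Dunford--Pettis treatment of the unbounded-$U$ case: items~5 and~8 of Assumption~\ref{Ass_15} already bound $\eta$ uniformly on $[0, T] \times K \times U$ (take $u' = u''$ in (\ref{F_31_3}) and combine with (\ref{F_31_0})), so the $\dot{z}_k$ are uniformly bounded in both cases and no uniform-integrability bookkeeping is needed.
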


The following theorem is Pontryagin's principle.

\begin{theorem}{\rm \cite[\S 3.2]{YongZhou1999}}  \label{Thm_18}
Let Assumptions~{\rm \ref{Ass_15}, \ref{Ass_17}} hold with a fixed time horizon~$ T \in (0, +\infty) ${\rm ,} and let
$ (x^*(\cdot), \, u^*(\cdot)) $ be an optimal pair in the problem~{\rm (\ref{F_28}), (\ref{F_29})} for a fixed
initial position $ \: (t_0, x_0) \, \in \, [0, T) \times G $. Denote
\begin{equation}
\begin{aligned}
& H(t, x, u, p) \:\: \stackrel{\mathrm{def}}{=} \:\: \left< p, f(t, x, u) \right> \: + \: \eta(t, x, u), \\
& \mathcal{H}(t, x, p) \:\: \stackrel{\mathrm{def}}{=} \:\: \inf\limits_{u' \, \in \, U} H(t, x, u', p) \\
& \forall \: (t, x, u, p) \: \in \: [0, T] \times G \times U \times \mathbb{R}^n.
\end{aligned}  \label{F_32}
\end{equation}
Then there exists a function $ \: p^* \, \colon \, [t_0, T] \to \mathbb{R}^n \: $ such that $ (x^*(\cdot), p^*(\cdot)) $ is
a solution of the characteristic boundary value problem
\begin{equation}
\left\{ \begin{aligned}
& \dot{x^*}(t) \:\: = \:\: f(t, \, x^*(t), \, u^*(t)) \:\: = \:\: \mathrm{D}_p H (t, \, x^*(t), \, u^*(t), \, p^*(t)), \\
& \dot{p^*}(t) \:\: = \:\: -\mathrm{D}_x H(t, \, x^*(t), \, u^*(t), \, p^*(t)), \\
& t \in [t_0, T], \\
& x^*(t_0) \: = \: x_0, \quad p^*(T) \: = \: \mathrm{D} \sigma \, (x^*(T)),
\end{aligned} \right.  \label{F_33}
\end{equation}
and the condition
\begin{equation}
\begin{aligned}
H(t, \, x^*(t), \, u^*(t), \, p^*(t)) \:\: = \:\: & \min_{u \, \in \, U} \: H(t, \, x^*(t), \, u, \, p^*(t)) \\
= \:\: & \mathcal{H}(t, \, x^*(t), \, p^*(t))
\end{aligned}  \label{F_34}
\end{equation}
holds for almost every $ t \in [t_0, T] $.
\end{theorem}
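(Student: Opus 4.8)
The plan is to derive the minimum principle by the classical method of needle (spike) variations. First I would introduce the adjoint trajectory~$ p^*(\cdot) $ as the unique absolutely continuous solution on~$ [t_0, T] $ of the linear terminal value problem
\begin{equation*}
\dot{p}^*(t) \: = \: -\mathrm{D}_x H \left( t, \, x^*(t), \, u^*(t), \, p^*(t) \right), \qquad
p^*(T) \: = \: \mathrm{D}\sigma \left( x^*(T) \right).
\end{equation*}
Since $ H(t, x, u, p) = \left< p, f(t, x, u) \right> + \eta(t, x, u) $, the right-hand side equals $ -\left( \mathrm{D}_x f \right)^{\!\top} p^* - \mathrm{D}_x \eta $ and is thus linear in~$ p^* $ with coefficients that are measurable in~$ t $ and, by the item~1 of Assumption~\ref{Ass_17} together with the a~priori bound~(\ref{F_31_10}) and the strong invariance from the item~4 of Assumption~\ref{Ass_15}, locally bounded along~$ x^*(\cdot) $; existence and uniqueness of~$ p^*(\cdot) $ follow.

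Next I would fix an arbitrary Lebesgue point~$ \tau \in (t_0, T) $ of the relevant integrands and an arbitrary control value~$ v \in U $, and perturb the optimal control on a short interval:
\begin{equation*}
u^{\varepsilon}(t) \: = \: v \:\: \mbox{for} \:\: t \in [\tau - \varepsilon, \tau], \qquad
u^{\varepsilon}(t) \: = \: u^*(t) \:\: \mbox{otherwise},
\end{equation*}
with small~$ \varepsilon > 0 $. Denoting by~$ x^{\varepsilon}(\cdot) $ the corresponding trajectory, the standard perturbation analysis gives that the first variation $ y(t) = \lim_{\varepsilon \to 0^+} \left( x^{\varepsilon}(t) - x^*(t) \right) / \varepsilon $ vanishes on~$ [t_0, \tau) $, jumps to $ y(\tau) = f(\tau, x^*(\tau), v) - f(\tau, x^*(\tau), u^*(\tau)) $ at~$ t = \tau $, and afterwards solves the linearized equation $ \dot{y}(t) = \mathrm{D}_x f(t, x^*(t), u^*(t)) \, y(t) $ on~$ [\tau, T] $.

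Then I would expand the cost to first order in~$ \varepsilon $:
\begin{equation*}
\begin{aligned}
& \lim_{\varepsilon \to 0^+} \frac{J_{t_0, T, x_0}(u^{\varepsilon}(\cdot)) \: - \: J_{t_0, T, x_0}(u^*(\cdot))}{\varepsilon}
\:\: = \:\: \left< \mathrm{D}\sigma(x^*(T)), \, y(T) \right> \: + \\
& \qquad + \:\: \eta(\tau, x^*(\tau), v) \: - \: \eta(\tau, x^*(\tau), u^*(\tau)) \: + \:
\int\limits_{\tau}^{T} \left< \mathrm{D}_x \eta(t, x^*(t), u^*(t)), \, y(t) \right> dt.
\end{aligned}
\end{equation*}
Differentiating $ \left< p^*(t), y(t) \right> $ along~$ [\tau, T] $ and substituting both the adjoint and the linearized equations yields the telescoping identity $ \frac{d}{dt} \left< p^*, y \right> = -\left< \mathrm{D}_x \eta, y \right> $; integrating it over~$ [\tau, T] $ and using $ p^*(T) = \mathrm{D}\sigma(x^*(T)) $ collapses the terminal and integral terms above into $ \left< p^*(\tau), y(\tau) \right> $. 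Hence the whole first-order variation reduces to $ H(\tau, x^*(\tau), v, p^*(\tau)) - H(\tau, x^*(\tau), u^*(\tau), p^*(\tau)) $. Since~$ u^*(\cdot) $ is optimal and~$ \varepsilon > 0 $, this limit is nonnegative, and as~$ v \in U $ and almost every~$ \tau $ are arbitrary, the minimum condition~(\ref{F_34}) follows; combined with the adjoint system it gives~(\ref{F_33}).

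The hardest part is the perturbation analysis itself, namely justifying that~$ x^{\varepsilon}(\cdot) $ exists on all of~$ [t_0, T] $, remains inside~$ G $, and admits the claimed first-order expansion with an~$ o(\varepsilon) $ remainder uniform enough to pass to the limit. This needs careful use of the growth bound~(\ref{F_31_9})--(\ref{F_31_10}) and of the Lipschitz/continuity conditions of Assumptions~\ref{Ass_15} and~\ref{Ass_17} (the moduli~$ \omega_1, \ldots, \omega_5 $ in the unbounded-$ U $ case), together with the strong invariance of~$ G $ to keep every trajectory in the region where~$ f, \eta, \sigma $ are smooth. The convexity of~$ [f, \eta](t, x, U) $ from the item~10 of Assumption~\ref{Ass_15} plays no role here; it is only needed for the existence Theorem~\ref{Thm_16}.
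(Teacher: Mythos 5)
The paper offers no proof of Theorem~\ref{Thm_18}: it is quoted directly from \cite[\S 3.2]{YongZhou1999}, where it is established by exactly the classical spike-variation argument you outline (needle perturbations at Lebesgue points, the variational equation for the first-order state perturbation, and adjoint duality collapsing the cost variation to the difference $H(\tau, x^*(\tau), v, p^*(\tau)) - H(\tau, x^*(\tau), u^*(\tau), p^*(\tau))$). Your sketch is therefore correct and follows essentially the same route as the cited source, and your closing remarks --- that strong invariance of $G$ keeps the perturbed trajectories in the region where the data are smooth, and that convexity of $[f, \eta](t, x, U)$ is needed only for the existence result (Theorem~\ref{Thm_16}), not for the necessary conditions --- are also accurate.
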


\begin{remark}  \label{Rem_19}  \rm
For any $ \: (t, x) \, \in \, [0, T] \times G, \: $ the function
\begin{equation}
\mathbb{R}^n \: \ni \: p \:\: \longmapsto \:\: \mathcal{H}(t, x, p)  \label{F_35}
\end{equation}
is concave{\rm ,} since this is the infimum of the linear function $ H(t, x, u, \cdot) $ over $ u \in U $
{\rm (}see~{\rm (\ref{F_32}))}. \qed
\end{remark}

Introduce the set of minimizers
\begin{equation}
U^*(t, x, p) \:\: \stackrel{\mathrm{def}}{=} \:\: \mathrm{Arg} \min_{u \, \in \, U} \: H(t, x, u, p) \quad
\forall \: (t, x, p) \: \in \: [0, T] \times G \times \mathbb{R}^n  \label{F_36}
\end{equation}
(it is either empty or convex if $ H $ is convex with respect to $ u $).

In line with Remark~\ref{Rem_12_0}, it is reasonable to modify Theorem~\ref{Thm_18} in order to parametrize
characteristic fields with respect to the initial adjoint vector.

\begin{theorem}  \label{Thm_20}
Let Assumptions~{\rm \ref{Ass_15}, \ref{Ass_17}} hold with a fixed $ T \in (0, +\infty) $. For any
$ \: (t_0, x_0) \, \in \, [0, T) \times G, \: $ the value function~{\rm (\ref{F_30})} can be represented as
the minimum of
$$
\sigma(x^*(T)) \: + \: \int\limits_{t_0}^T \eta(t, \, x^*(t), \, u^*(t)) \, dt
$$
over the solutions of the characteristic Cauchy problems
\begin{equation}
\left\{ \begin{aligned}
& \dot{x^*}(t) \:\: = \:\: f(t, \, x^*(t), \, u^*(t)), \\
& \dot{p^*}(t) \:\: = \:\: -\mathrm{D}_x H(t, \, x^*(t), \, u^*(t), \, p^*(t)), \\
& u^*(t) \:\: \in \:\: U^*(t, \, x^*(t), \, p^*(t)), \\
& t \in [t_0, T], \\
& x^*(t_0) \: = \: x_0, \quad p^*(t_0) \: = \: p_0,
\end{aligned} \right.  \label{F_37}
\end{equation}
for all possible values $ \: p^*(t_0) = p_0 \in \mathbb{R}^n $. If{\rm ,} moreover{\rm ,} $ \eta \equiv 0 $
{\rm (}Mayer form of the cost functional~{\rm (\ref{F_29}))} and {\rm (\ref{F_36})} satisfies
\begin{equation}
\begin{aligned}
& U^*(t, x, p) \: = \: U^*(t, \, x, \, \alpha \, p) \\
& \forall \alpha > 0 \quad \forall \: (t, x, p) \: \in \: [0, T] \times G \times \mathbb{R}^n,
\end{aligned}  \label{F_37_1}
\end{equation}
then it is enough to consider a bounded set of parameter values{\rm ,} i.\,e.{\rm ,} for any
$ \: (t_0, x_0) \: \in \: [0, T) \times G, \: $ the value function~{\rm (\ref{F_30})} is the minimum of
$ \sigma(x^*(T)) $ over the solutions of the Cauchy problems~{\rm (\ref{F_37})} for all
\begin{equation}
p_0 \:\: \in \:\: \left\{ p \in \mathbb{R}^n \: \colon \: \| p \| = 1 \:\: \mathrm{or} \:\: p = 0 \right\}.  \label{F_37_0}
\end{equation}
\end{theorem}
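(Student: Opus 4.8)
The plan is to compare the value function $V$ defined in (\ref{F_30}) with the quantity
$$
m(t_0, x_0) \;\stackrel{\mathrm{def}}{=}\; \inf \left\{ \sigma(x^*(T)) \: + \: \int\limits_{t_0}^{T} \eta(t, \, x^*(t), \, u^*(t)) \, dt \right\},
$$
where the infimum is taken over all solutions $ (x^*(\cdot), p^*(\cdot), u^*(\cdot)) $ of the Cauchy problem~(\ref{F_37}) with $ p^*(t_0) = p_0 $ running through $ \mathbb{R}^n $, and to prove $ m = V $ by two inequalities. The inequality $ m(t_0, x_0) \geqslant V(t_0, x_0) $ is the routine direction: any solution of~(\ref{F_37}) supplies a measurable control $ u^*(\cdot) \in \mathcal{U}_{t_0, T} $ and, by the state equation together with the strong invariance in item~4 of Assumption~\ref{Ass_15}, a trajectory $ x^*(\cdot) = x(\cdot; \, t_0, x_0, u^*(\cdot)) $ that remains in $ G $ on all of $ [t_0, T] $. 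Hence the bracketed quantity equals $ J_{t_0, T, x_0}(u^*(\cdot)) $ and is bounded below by $ V(t_0, x_0) = \inf_{u(\cdot) \in \mathcal{U}_{t_0, T}} J_{t_0, T, x_0}(u(\cdot)) $; taking the infimum over solutions gives $ m \geqslant V $.

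For the reverse inequality, and for the attainment of the minimum, I would invoke the deeper inputs of the section. By Theorem~\ref{Thm_16} an optimal control $ u^*(\cdot) $ exists at $ (t_0, x_0) $, and by Pontryagin's principle (Theorem~\ref{Thm_18}) there is an adjoint arc $ p^*(\cdot) $ for which $ (x^*(\cdot), p^*(\cdot)) $ solves the boundary value problem~(\ref{F_33}) and the minimality condition~(\ref{F_34}) holds almost everywhere, i.\,e. $ u^*(t) \in U^*(t, x^*(t), p^*(t)) $. Putting $ p_0 := p^*(t_0) $, the triple $ (x^*, p^*, u^*) $ is then literally a solution of the Cauchy problem~(\ref{F_37}), because its state and adjoint equations and its control inclusion coincide with~(\ref{F_33})--(\ref{F_34}); moreover its cost equals $ J_{t_0, T, x_0}(u^*(\cdot)) = V(t_0, x_0) $ by optimality. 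Thus $ m(t_0, x_0) \leqslant V(t_0, x_0) $ and the infimum is attained, which together with the previous paragraph yields $ m = V $ and proves the first part.

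For the second part I would exploit the special structure of the Mayer case. When $ \eta \equiv 0 $ the cost reduces to $ \sigma(x^*(T)) $, which depends only on the state trajectory, and $ H(t, x, u, p) = \left< p, f(t, x, u) \right> $ is linear in $ p $, so that the right-hand side $ -\mathrm{D}_x H(t, x^*, u^*, p^*) $ of the adjoint equation in~(\ref{F_37}) is positively homogeneous of degree one in $ p^* $. Fix a solution $ (x^*, p^*, u^*) $ with $ p^*(t_0) = p_0 \neq 0 $ and any $ \alpha > 0 $. The scaled arc $ (x^*, \alpha p^*, u^*) $ then satisfies the same state equation, the correspondingly scaled adjoint equation, and --- by the scale-invariance hypothesis~(\ref{F_37_1}), which gives $ U^*(t, x^*(t), \alpha p^*(t)) = U^*(t, x^*(t), p^*(t)) $ --- the same control inclusion; it is therefore a solution of~(\ref{F_37}) with initial adjoint $ \alpha p_0 $, the same state trajectory, and the same cost. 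Choosing $ \alpha = 1 / \| p_0 \| $ shows that every cost obtainable from some $ p_0 \neq 0 $ is already obtainable from a point of the unit sphere, while $ p_0 = 0 $ (which forces $ p^* \equiv 0 $ by homogeneity) must be kept as a separate case; hence the minimization over $ p_0 \in \mathbb{R}^n $ collapses to the set~(\ref{F_37_0}).

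The main obstacle is organizational rather than analytical: one must verify that a Pontryagin extremal is genuinely a solution of the Cauchy problem~(\ref{F_37}) --- which amounts to matching the two sets of differential equations and identifying the minimality condition~(\ref{F_34}) with the inclusion $ u^* \in U^* $ --- and, conversely, that any solution of~(\ref{F_37}) is an admissible pair, for which the strong invariance of $ G $ and the measurability of the selected control are exactly what is needed. In the Mayer reduction the delicate point is that collapsing rays to the unit sphere uses both the exact degree-one homogeneity of the adjoint dynamics (available precisely because $ \eta \equiv 0 $) and the hypothesis~(\ref{F_37_1}); relaxing either would reintroduce dependence on $ \| p_0 \| $ and block the reduction.
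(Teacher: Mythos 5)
Your proposal is correct and takes essentially the same approach as the paper's proof: the paper derives the first part from Theorems~\ref{Thm_16} and \ref{Thm_18} together with the observation that the Cauchy problems~(\ref{F_37}) generate a wider characteristic field than the boundary value problems~(\ref{F_33}),~(\ref{F_34}), which is precisely what your two inequalities $ m \geqslant V $ and $ m \leqslant V $ make explicit. Your treatment of the Mayer case is likewise the paper's argument: positive homogeneity of degree one of the adjoint dynamics (available exactly because $ \eta \equiv 0 $) combined with the hypothesis~(\ref{F_37_1}) shows that positive scaling of $ p_0 $ leaves the state trajectory and hence the cost unchanged, so the minimization collapses to the set~(\ref{F_37_0}).
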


\begin{proof}
The first statement directly follows from Theorems~\ref{Thm_16},~\ref{Thm_18} and the fact that, compared to the boundary
value problems~(\ref{F_33}),~(\ref{F_34}), the Cauchy problems~(\ref{F_37}) generate a wider characteristic field
(due to the absence of the terminal condition on the adjoint variable).

Under the conditions $ \eta \equiv 0 $ and (\ref{F_37_1}), the right-hand side of the adjoint system is positive homogeneous of
degree~$ 1 $ with respect to the adjoint variable, and the state components of the characteristic curves do not change after
multiplying $ p_0 $ by any positive number. This leads to the second part of the theorem.
\end{proof}

\begin{remark}  \label{Rem_21}  \rm
Similar properties can be obtained if $ \inf $ is replaced with $ \sup $ in {\rm (\ref{F_29}) (}maximization problem{\rm )}.
Then $ \sup $ appears instead of $ \inf $ in the Hamiltonian~{\rm (\ref{F_32}),} the reduction~{\rm (\ref{F_35})} becomes
convex{\rm ,} $ \mathrm{Arg} \min $ is replaced with $ \mathrm{Arg} \max $ in {\rm (\ref{F_36}),} and $ V $ is determined
through maximization {\rm (}rather than minimization{\rm )} in Theorem~{\rm \ref{Thm_20}}. \qed
\end{remark}

\begin{remark}  \label{Rem_22}  \rm
Let Assumption~{\rm \ref{Ass_15}} hold with a fixed $ T \in (0, +\infty) $. Furthermore{\rm ,} suppose that
the functions~{\rm (\ref{F_31})} are uniformly continuous on $ \, [0, T] \times \bar{G} \times U \, $ if $ U $ is unbounded{\rm ,}
and recall Remark~{\ref{Rem_15_1}} in case of a bounded~$ U $. Then{\rm ,} similarly to {\rm \cite[\S 4.2]{YongZhou1999},} one can
establish that the value function~{\rm (\ref{F_30})} is the unique viscosity solution of the Cauchy problem
\begin{equation}
\left\{ \begin{aligned}
& -\frac{\partial V(t, x)}{\partial t} \: - \: \mathcal{H}(t, \, x, \, \mathrm{D}_x V(t, x)) \:\: = \:\: 0, \quad
(t, x) \: \in \: (0, T) \times G, \\
& V(T, x) \: = \: \sigma(x), \quad x \in G,
\end{aligned} \right.  \label{F_39}
\end{equation}
or{\rm ,} equivalently{\rm ,} the unique minimax solution of
\begin{equation}
\left\{ \begin{aligned}
& \frac{\partial V(t, x)}{\partial t} \: + \: \mathcal{H}(t, \, x, \, \mathrm{D}_x V(t, x)) \:\: = \:\: 0, \quad
(t, x) \: \in \: (0, T) \times G, \\
& V(T, x) \: = \: \sigma(x), \quad x \in G.
\end{aligned} \right.  \label{F_38}
\end{equation}
\qed
\end{remark}

\begin{remark}  \label{Rem_23}  \rm
If{\rm ,} by using Pontryagin's principle {\rm (}Theorem~{\rm \ref{Thm_18}),} one can reasonably exclude singular
regimes from consideration{\rm ,} so that the nonuniqueness in the choice of control values occurs only at isolated time
instants{\rm ,} then each of the considered Cauchy problems~{\rm (\ref{F_37})} has a unique solution. Due to the second
part of Theorem~{\rm \ref{Thm_20},} the bounded set~{\rm (\ref{F_37_0})} of initial adjoint vectors is enough for
determining the value function if one has a Mayer cost functional and the condition~{\rm (\ref{F_37_1})} holds. This is
important from the computational point of view. \qed
\end{remark}

\begin{remark}  \label{Rem_24}  \rm
If one cannot guarantee the absence of singular regimes{\rm ,} then the multi-valued extremal control map may yield
more than one solution of a particular Cauchy problem~{\rm (\ref{F_37}).} This is rather difficult to handle in
a numerical algorithm. Besides{\rm ,} if{\rm ,} for example{\rm ,} the theoretical optimal control synthesis contains
a universal singular surface {\rm \cite[\S 3.5.1]{Melikyan1998} (}transversally entered by forward-time characteristics
from both sides{\rm ,} so that $ u = u' $ on one side and $ u = u'' \neq u' $ on the other{\rm ),} then computations
may lead to excessive bang-bang switchings {\rm (}from $ u = u' $ to $ u = u'' $ and vice versa{\rm )} around
the singular surface instead of just following the continuous singular control regime. This can essentially decrease
the numerical accuracy of integrating the characteristic system. In such situations{\rm ,} it is reasonable to consider
a smooth uniform approximation of the Hamiltonian~$ \mathcal{H} $ {\rm (}and then to apply Theorem~{\rm \ref{Thm_13})}
if the latter is not smooth{\rm ,} so that the choice of control values in {\rm (\ref{F_37})} becomes unique. A general
theoretical result on the stability of the value function with respect to problem data is given{\rm ,} for instance{\rm ,}
in {\rm \cite[\S 4.4.1]{YongZhou1999}}. Note that the required a priori estimates may directly follow from standard smoothing
techniques{\rm ,} such as adding a small positive-definite control-dependent quadratic form to $ H $ in the minimization
problem with a compact~$ U $. However{\rm ,} suitable smooth approximations of Hamiltonians often lead to the appearance of
an integral term in the cost functional{\rm ,} and then the second part of Theorem~{\rm \ref{Thm_20}} is not
applicable{\rm ,} i.\,e.{\rm ,} the finite-dimensional optimization with respect to $ p_0 $ has to be performed over
the whole space~$ \mathbb{R}^n $. Moreover{\rm ,} a standard transformation of a Bolza functional to Mayer form {\rm (}by
introducing a new state variable{\rm )} may violate the uniqueness in the choice of control values in {\rm (\ref{F_37})}. \qed
\end{remark}

\begin{remark}  \label{Rem_42}  \rm
Using the method of characteristics according to Theorem~\ref{Thm_20} (if possible) has a number of key advantages over
solving Cauchy problems for Hamilton--Jacobi--Bellman equations by well-known grid-based approaches, such as finite-difference
schemes
\cite{FlemingSoner2006,CrandallLions1984,OsherShu1991,JiangPeng2000,ZhangShu2003,BokanForcadelZidani2010,
BokanCristianiZidani2010,ROCHJ2017},
semi-Lagrangian schemes
\cite{BardiCapuzzoDolcetta2008,FalconeFerretti1998,Falcone2006,CristianiFalcone2007,ROCHJ2017},
level set methods
\cite{OsherSethian1988,Osher1993,Sethian1999,OsherFedkiw2003,MitchellBayenTomlin2001,MitchellBayenTomlin2005,
MitchellLevelSetToolbox2012},
etc. First, the method of characteristics allows the value function to be computed separately at different initial positions
(contrary to the generic nature of grid methods). Thereby, the curse of dimensionality can be mitigated, and parallel
computations can significantly increase the numerical efficiency, although constructing global (or semi-global) value function
approximations often suffers from the curse of complexity (sparse grid techniques \cite{KangWilcox2017} may help to overcome
the latter). Furthermore, the practical dilemma of choosing a suitable bounded computational domain in the state space often
arises when implementing grid methods. In fact, if one can analytically verify the existence of a bounded strongly invariant
domain in the state space, then it can be used in semi-Lagrangian iteration procedures, but the actual convergence of the latter
strongly depends on how the assumed initial approximation of the value function is taken, which is a heuristic choice.
Finite-difference schemes do not need initial approximations of solutions but cannot take possible strong invariance into account,
i.\,e., a sufficiently large computational domain has to be chosen in order to reduce boundary cutoff errors in a relevant
subdomain, and there is also lack of general recommendations for that. These difficulties do not appear when different initial
positions are treated separately by the method of characteristics. Next, the optimal feedback control strategy at any isolated
position can be obtained directly from integrated optimal characteristics, and one does not need to compute partial derivatives of
the value function, which may be an unstable procedure. However, despite the indicated strong points of the presented characteristic
approach, it still has a limited range of practical applicability, as follows from Remarks~\ref{Rem_14}, \ref{Rem_14_0}, \ref{Rem_23},
\ref{Rem_24} and the aforementioned curse of complexity. Besides, its extensions to zero-sum two-player differential games have been
developed only for sufficiently narrow classes of control systems (see Section~4 and Appendix).
\qed
\end{remark}

\begin{example}  \label{Exa_41}  \rm
Consider the Cauchy problem for an Eikonal equation
\begin{equation}
\left\{ \begin{aligned}
& \frac{\partial V(t, x)}{\partial t} \: + \: c(x) \, \left\| \mathrm{D}_x V(t, x) \right\| \:\: = \:\: 0, \quad
(t, x) \: \in \: (0, T) \times \mathbb{R}^n, \\
& V(T, x) \: = \: \sigma(x), \quad x \in \mathbb{R}^n,
\end{aligned} \right.  \label{F_73}
\end{equation}
in which $ \: c \colon \mathbb{R}^n \to \mathbb{R} \: $ and $ \: \sigma \colon \mathbb{R}^n \to \mathbb{R} \: $ are twice
continuously differentiable functions{\rm ,} $ c $ is Lipschitz continuous{\rm ,} and one of the following two conditions holds{\rm :}
\begin{equation}
c(x) \, < \, 0 \quad \forall x \in \mathbb{R}^n,  \label{F_74}
\end{equation}
\begin{equation}
c(x) \, > \, 0 \quad \forall x \in \mathbb{R}^n.  \label{F_75}
\end{equation}
For any fixed $ \: (t_0, x_0) \, \in \, [0, T) \times \mathbb{R}^n, \: $ the viscosity solution~$ V(t_0, x_0) $ of (\ref{F_73}) is
the value function in the control problem
\begin{equation}
\left\{ \begin{aligned}
& \dot{x}(t) \: = \: c(x(t)) \, u(t), \\
& x(t_0) \, = \, x_0, \\
& u(t) \: \in \: U \: = \: \left\{ v \in \mathbb{R}^n \: \colon \: \| v \| \leqslant 1 \right\}, \\
& t \in [t_0, T],
\end{aligned} \right.  \label{F_76}
\end{equation}
with the criterion
\begin{equation}
\sigma(x(T)) \: \longrightarrow \: \min  \label{F_77}
\end{equation}
in the case~(\ref{F_74}) and
\begin{equation}
\sigma(x(T)) \: \longrightarrow \: \max  \label{F_78}
\end{equation}
in the case~(\ref{F_75}). Then Theorem~\ref{Thm_20} and Remarks~\ref{Rem_21},~\ref{Rem_22} can be applied. The Cauchy
problems~(\ref{F_37}) become
\begin{equation}
\left\{ \begin{aligned}
& \dot{x^*}(t) \:\: = \:\: c(x^*(t)) \: u^*(t), \\
& \dot{p^*}(t) \:\: = \:\: -\mathrm{D} c \, (x^*(t)) \: \left< p^*(t), \, u^*(t) \right> \:\: = \:\:
-\mathrm{D} c \, (x^*(t)) \: \| p^*(t) \|, \\
& u^*(t) \: \in \: U^*(p^*(t)), \\
& t \in [t_0, T], \\
& x^*(t_0) \: = \: x_0, \quad p^*(t_0) \: = \: p_0,
\end{aligned} \right.  \label{F_79}
\end{equation}
where
\begin{equation}
U^*(p) \:\: = \:\: \begin{cases}
\left\{ \dfrac{1}{\| p \|} \: p \right\}, & p \neq 0, \\
U, & p = 0,
\end{cases}  \label{F_80}
\end{equation}
for all $ p \in \mathbb{R}^n $, and the set of possible values of $ p_0 $ can be taken as (\ref{F_37_0}). Note that
$ p^* \equiv 0 $ everywhere on $ [t_0, T] $ if $ p^*(t') = 0 $ at some instant~$ t' \in [t_0, T] $. Therefore,
$ p^*(t) \neq 0 $ and $ U^*(p^*(t)) $ is singleton for all $ t \in [t_0, T] $ if $ p_0 \neq 0 $, but no information
concerning $ u^*(\cdot) $ is available when $ p_0 = 0 $. Recall also the terminal condition in Pontryagin's principle
(Theorem~\ref{Thm_18}), which should be formulated as $ \: p^*(T) \, \uparrow\uparrow \, \mathrm{D} \sigma \, (x^*(T)), \: $ i.\,e., as
\begin{equation}
p^*(T) \:\: \in \:\: \{ \alpha \, \mathrm{D} \sigma \, (x^*(T)) \: \colon \: \alpha > 0 \},  \label{F_80_0}
\end{equation}
if $ p_0 $ is normalized. Then it is easy to conclude that the value~$ p_0 = 0 $ can be excluded from consideration if
\begin{equation}
\mathrm{D} \sigma(x) \neq 0 \quad \forall x \in \mathbb{R}^n  \label{F_81}
\end{equation}
or
\begin{equation}
\{ x \in \mathbb{R}^n \: \colon \: \mathrm{D} \sigma(x) \, = \, 0 \} \:\: \subseteq \:\: \begin{cases}
\mathrm{Arg} \, \max\limits_{x \in \mathbb{R}^n} \, \sigma(x) & \mbox{in the case~(\ref{F_74})}, \\
\mathrm{Arg} \, \min\limits_{x \in \mathbb{R}^n} \, \sigma(x) & \mbox{in the case~(\ref{F_75})}.
\end{cases}  \label{F_82}
\end{equation}

For the problems~(\ref{F_76}),~(\ref{F_77}) and (\ref{F_76}),~(\ref{F_78}) under the corresponding basic assumptions, it is
in fact possible to modify the statement of Theorem~\ref{Thm_20} in order to exclude the nonuniqueness in the choice of extremal
control values without imposing the particular conditions~(\ref{F_81}),~(\ref{F_82}). Let us demonstrate this for
the problem~(\ref{F_76}),~(\ref{F_77}) in the case~(\ref{F_74}). One can use similar reasonings for
the problem~(\ref{F_76}),~(\ref{F_78}) in the case~(\ref{F_75}).

Let $ x' \in \mathbb{R}^n $ be a zero point of $ \mathrm{D} \sigma $, i.\,e., $ \: \mathrm{D} \sigma(x') = 0 $.
Theorem~\ref{Thm_20} does not allow the characterization of the extremal trajectories fulfilling $ \: x^*(T) = x' $,
$ \: p^*(T) = \mathrm{D} \sigma(x') = 0 \: $ and $ p^* \equiv 0 $. Let such a trajectory emanating from $ x^*(t_0) = x_0 $
exist. Then the minimum time problem
\begin{equation}
\left\{ \begin{aligned}
& \dot{x}(t) \: = \: c(x(t)) \, u(t), \\
& x(t_0) \, = \, x_0, \quad x(T') \, = \, x', \\
& u(t) \, \in \, U, \\
& t \in [t_0, T'], \\
& T' \geqslant t_0 \:\: \mbox{is free}, \\
& T' \: \longrightarrow \: \min \, ,
\end{aligned} \right.  \label{F_83}
\end{equation}
admits a solution for which $ \: T' = T'_{\min} \leqslant T $. If we extend the related control function to the whole
time interval~$ [t_0, T] $ by taking it as zero on $ (T'_{\min}, T] $, then the resulting process fulfills $ x^*(t) = x' $
for all $ t \in [T'_{\min}, T] $ and thereby gives the cost~$ \sigma(x') $. This will be an optimal process for the original
problem~(\ref{F_76}),~(\ref{F_77}) if $ \: V(t_0, x_0) = \sigma(x') $. Furthermore, Pontryagin's principle for the minimum time
problem~(\ref{F_83}) (see \cite{Pontryagin1964}) leads to the same system of characteristic equations as in (\ref{F_33}), but
in the absence of the terminal condition on $ p^*(T') $ and under the requirement that $ p^*(t) \neq 0 $ for all
$ t \in [t_0, T'] $. Let us also emphasize that these reasonings are applicable to any zero point of $ \mathrm{D} \sigma $
which can be reached at $ t = T $ by an extremal state trajectory emanating from $ x^*(t_0) = x_0 $.

Thus, one arrives at the following statement: {\it for any $ \: (t_0, x_0) \, \in \, [0, T) \times \mathbb{R}^n, \: $
the minimax solution~$ V(t_0, x_0) $ of the problem~{\rm (\ref{F_73})} or{\rm ,} equivalently{\rm ,} the value function in
the problem~{\rm (\ref{F_76}),~(\ref{F_77}) (}under the formulated basic conditions on the functions~$ c, \sigma ${\rm ,}
including {\rm (\ref{F_74}))} is the minimum of the quantity~$ \sigma(x^*(T'')) $ over the solutions of the Cauchy
problems~{\rm (\ref{F_79}),~(\ref{F_80})} for all possible values $ T'' \in [t_0, T] $ and
\begin{equation}
p_0 \:\: \in \:\: \left\{ p \in \mathbb{R}^n \: \colon \: \| p \| = 1 \right\}  \label{F_84}
\end{equation}
{\rm (}the value~$ p_0 = 0 $ is excluded here{\rm )}. If{\rm ,} moreover{\rm ,}
$$
\{ x \in \mathbb{R}^n \: \colon \: \mathrm{D} \sigma(x) \, = \, 0 \} \:\: = \:\:
\mathrm{Arg} \, \min\limits_{x \in \mathbb{R}^n} \, \sigma(x) \:\: = \:\: \{ x' \}
$$
for some $ x' \in \mathbb{R}^n ${\rm ,} then{\rm ,} in the latter characterization of $ V(t_0, x_0) ${\rm ,} it is enough
to specify $ T'' $ as the minimum over all $ T' \in [t_0, T] $ at which $ x^*(T') = x' $ if such $ T' $ exist{\rm ,} and
as $ T $ otherwise}. \qed
\end{example}

\begin{example}  \label{Exa_41_2}  \rm
Consider the Cauchy problem
\begin{equation}
\left\{ \begin{aligned}
& \frac{\partial V(t, x)}{\partial t} \: + \: c(x) \, \left\| \mathrm{D}_x V(t, x) \right\| \: + \: \eta(x) \:\: = \:\: 0, \quad
(t, x) \: \in \: (0, T) \times \mathbb{R}^n, \\
& V(T, x) \: = \: \sigma(x), \quad x \in \mathbb{R}^n,
\end{aligned} \right.  \label{F_73_2}
\end{equation}
in which {\it $ \: c \colon \mathbb{R}^n \to \mathbb{R} ${\rm ,} $ \: \eta \colon \mathbb{R}^n \to \mathbb{R} \: $ and
$ \: \sigma \colon \mathbb{R}^n \to \mathbb{R} \: $ are twice continuously differentiable functions{\rm ,}
$ c $ is Lipschitz continuous{\rm ,} and one of the conditions~{\rm (\ref{F_74}),~(\ref{F_75})} holds.}

The special case of (\ref{F_73_2}) with $ \eta \equiv 0 $ was studied in Example~\ref{Exa_41}.

For any fixed $ \: (t_0, x_0) \, \in \, [0, T) \times \mathbb{R}^n, \: $ the viscosity solution~$ V(t_0, x_0) $ of (\ref{F_73_2}) is
the value function in the control problem~(\ref{F_76}) with the criterion
\begin{equation}
\sigma(x(T)) \: + \: \int\limits_{t_0}^T \eta(x(t)) \, dt \:\: \longrightarrow \:\: \min  \label{F_77_2}
\end{equation}
in the case~(\ref{F_74}) and
\begin{equation}
\sigma(x(T)) \: + \: \int\limits_{t_0}^T \eta(x(t)) \, dt \:\: \longrightarrow \:\: \max  \label{F_78_2}
\end{equation}
in the case~(\ref{F_75}). As per Example~\ref{Exa_41}, Theorem~\ref{Thm_20} and Remarks~\ref{Rem_21},~\ref{Rem_22} can be applied. The Cauchy
problems~(\ref{F_37}) become
\begin{equation}
\left\{ \begin{aligned}
& \dot{x^*}(t) \:\: = \:\: c(x^*(t)) \: u^*(t), \\
& \dot{p^*}(t) \:\: = \:\: -\mathrm{D} c \, (x^*(t)) \: \| p^*(t) \| \: - \: \mathrm{D} \eta \, (x^*(t)), \\
& u^*(t) \: \in \: U^*(p^*(t)), \\
& t \in [t_0, T], \\
& x^*(t_0) \: = \: x_0, \quad p^*(t_0) \: = \: p_0,
\end{aligned} \right.  \label{F_79_2}
\end{equation}
where $ U^*(p) $ is defined by (\ref{F_80}) for all $ p \in \mathbb{R}^n $, and $ p_0 $ takes values in the whole
space~$ \mathbb{R}^n $.

For the Bolza functional in (\ref{F_77_2}) and (\ref{F_78_2}), the set of possible values of $ p_0 $ cannot be reduced to
the bounded set~(\ref{F_37_0}). However, by introducing the new scalar state variable~$ \tilde{x} $ such that
\begin{equation}
\dot{\tilde{x}}(t) \: = \: \eta(x(t)), \quad t \in [t_0, T], \quad \tilde{x}(t_0) \: = \: 0,  \label{F_76_2}
\end{equation}
one arrives at the Mayer functional $ \: \sigma(x(T)) \, + \, \tilde{x}(T) $. Then the characteristic Cauchy problems
take the form
\begin{equation}
\left\{ \begin{aligned}
& \dot{x^*}(t) \:\: = \:\: c(x^*(t)) \: u^*(t), \\
& \dot{\tilde{x}}^*(t) \:\: = \:\: \eta(x^*(t)), \\
& \dot{p^*}(t) \:\: = \:\: -\mathrm{D} c \, (x^*(t)) \: \| p^*(t) \| \: - \: \tilde{p}^*(t) \: \mathrm{D} \eta \, (x^*(t)), \\
& \dot{\tilde{p}}^* \, \equiv \, 0 \:\:\: \Longrightarrow \:\:\: \tilde{p}^* \, \equiv \, \mathrm{const}, \\
& u^*(t) \: \in \: U^*(p^*(t)), \\
& t \in [t_0, T], \\
& x^*(t_0) \: = \: x_0, \quad \tilde{x}^*(t_0) \: = \: 0, \quad p^*(t_0) \: = \: p_0,
\end{aligned} \right.  \label{F_79_3}
\end{equation}
where
\begin{equation}
\left( p_0, \tilde{p}^* \right) \:\: \in \:\: \left\{ \left( p, \tilde{p} \right) \, \in \,
\mathbb{R}^n \times \mathbb{R} \: \colon \: \left\| \left( p, \tilde{p} \right) \right\| \, \in \, \{ 0, 1 \} \right\}
\label{F_37_2}
\end{equation}
according to the second part of Theorem~\ref{Thm_20}. Since the coefficient of $ \: \mathrm{D} \eta \, (x^*(t)) \: $ in
the adjoint system of the original Cauchy problems~(\ref{F_79_2}) does not vanish (it equals~$ -1 $), the case
$ \tilde{p}^* = 0 $ can be excluded when considering the transformed Cauchy problems~(\ref{F_79_3}), i.\,e., (\ref{F_37_2}) is
reduced to
\begin{equation}
\left( p_0, \tilde{p}^* \right) \:\: \in \:\: \left\{ \left( p, \tilde{p} \right) \, \in \,
\mathbb{R}^n \times \mathbb{R} \: \colon \: \left\| \left( p, \tilde{p} \right) \right\| \, = \, 1, \:\:
\tilde{p} \neq 0 \right\}.
\label{F_37_3}
\end{equation}

Hence, {\it the value function $ V(t_0, x_0) $ at any fixed position $ \: (t_0, x_0) \, \in \, [0, T) \times \mathbb{R}^n \: $
can be obtained by optimizing the functional}
\begin{equation}
\sigma(x^*(T)) \: + \: \tilde{x}^*(T) \:\: = \:\: \sigma(x^*(T)) \: + \:
\int\limits_{t_0}^T \eta(x^*(t)) \, dt  \label{F_79_4}
\end{equation}
{\it {\rm (}minimizing in the case~{\rm (\ref{F_74}),~(\ref{F_77_2})} and maximizing in
the case~{\rm (\ref{F_75}),~(\ref{F_78_2}))} over the solutions of the Cauchy problems~{\rm (\ref{F_79_3}),~(\ref{F_80})}
for all parameters~{\rm (\ref{F_37_3})}}.

The extremal control set~$ U^*(p) $ is not singleton only when $ p = 0 $. If $ \dot{p^*}(t) \neq 0 $ for all instants
$ t \in [t_0, T) $ at which $ p^*(t) = 0 $, then every zero of $ p^*(\cdot) $ on $ [t_0, T) $ is isolated, and any particular
choice of extremal control values at such isolated instants does not affect the corresponding characteristic curve.
A sufficient condition for that is
\begin{equation}
\mathrm{D} \eta(x) \: \neq \: 0 \quad \forall x \in \mathbb{R}^n.  \label{F_81_2}
\end{equation}
Indeed, the system~(\ref{F_79_3}) and condition~$ \tilde{p}^* \neq 0 $ yield that the expression
\begin{equation}
\dot{p^*}(t) \left|_{p^*(t) \, = \, 0} \right. \:\: = \:\: -\tilde{p}^* \: \mathrm{D} \eta \, (x^*(t))  \label{F_79_5}
\end{equation}
is nonzero for all $ t \in [t_0, T] $ if (\ref{F_81_2}) holds.

Finally, let us relax the condition~(\ref{F_81_2}) and modify the value function representations so as to avoid
the nonuniqueness in the choice of extremal control values. Instead of (\ref{F_81_2}), {\it assume that}
\begin{equation}
\{ x \in \mathbb{R}^n \: \colon \: \mathrm{D} \eta(x) \, = \, 0 \} \:\: \subseteq \:\:
\mathrm{Arg} \, \min\limits_{x \in \mathbb{R}^n} \, \eta(x) \:\: \cap \:\:
\mathrm{Arg} \, \min\limits_{x \in \mathbb{R}^n} \, \sigma(x)  \label{F_82_2}
\end{equation}
{\it in the case~{\rm (\ref{F_74})} and that}
\begin{equation}
\{ x \in \mathbb{R}^n \: \colon \: \mathrm{D} \eta(x) \, = \, 0 \} \:\: \subseteq \:\:
\mathrm{Arg} \, \max\limits_{x \in \mathbb{R}^n} \, \eta(x) \:\: \cap \:\:
\mathrm{Arg} \, \max\limits_{x \in \mathbb{R}^n} \, \sigma(x)  \label{F_82_3}
\end{equation}
{\it in the case~{\rm (\ref{F_75})}}. Then the following implication holds for the problem~(\ref{F_76}),~(\ref{F_77_2}) in
the case~(\ref{F_74}), as well as for the problem~(\ref{F_76}),~(\ref{F_78_2}) in the case~(\ref{F_75}): if an optimal
characteristic curve satisfies $ x^*(t') = x' $ for some $ t' \in [t_0, T) $ and $ \, \mathrm{D} \eta(x') = 0, \, $ it will
remain optimal after setting $ u^*(t) = 0 $ for all $ t \in (t', T] $ (which yields $ x^*(t) = x' $ and $ p^*(t) = 0 $ for all
$ t \in [t', T] $ due to Pontryagin's principle). This leads to the sought-for value function representations. Let us provide
the one for the problem~(\ref{F_76}),~(\ref{F_77_2}) in the case~(\ref{F_74}). A similar statement can be given for
the problem~(\ref{F_76}),~(\ref{F_78_2}) in the case~(\ref{F_75}).

{\it For any $ \: (t_0, x_0) \, \in \, [0, T) \times \mathbb{R}^n, \: $ the minimax solution~$ V(t_0, x_0) $ of
the problem~{\rm (\ref{F_73_2})} or{\rm ,} equivalently{\rm ,} the value function in
the problem~{\rm (\ref{F_76}),~(\ref{F_77_2}) (}under the formulated basic conditions on the functions~$ c, \eta, \sigma ${\rm ,}
including {\rm (\ref{F_74}),~(\ref{F_82_2}))} is the minimum of {\rm (\ref{F_79_4})} over such solutions of
{\rm (\ref{F_79_3}),~(\ref{F_80}),~(\ref{F_37_3})} that satisfy the following property{\rm :} if $ x^*(t') = x' $
for some $ t' \in [t_0, T) $ and $ \, \mathrm{D} \eta(x') = 0, \, $ then $ u^*(t) = 0 $ for all $ t \in (t', T] $
{\rm (}and{\rm ,} consequently{\rm ,} $ x^*(t) = x' $ for all $ t \in [t', T] ${\rm )}}.
\qed
\end{example}

\section{Curse-of-dimensionality-free approaches for solving \\ Hamilton--Jacobi--Isaacs equations in zero-sum \\ two-player
differential games: principal issues and \\ some applications}

The aim of this section is to discuss principal issues in overcoming the curse of dimensionality for zero-sum two-player
differential games and to indicate existing curse-of-dimensionality-free approaches for specific classes of systems.

Different classes of admissible control strategies lead to different notions of lower and upper values, saddle points and
Nash equilibrium in zero-sum two-player differential games. Under some standard technical assumptions and the so-called Isaacs
condition, there exists an equilibrium in feedback (closed-loop) strategies, and the corresponding value function is a unique
minimax/viscosity solution of the appropriate Cauchy problem for the Hamilton--Jacobi--Isaacs equation
\cite{KrasovskiiSubbotin1974,KrasovskiiSubbotin1988,SubbotinChentsov1981,Berkovitz1985,Subbotin1995}. For the class of
nonanticipative (Varaiya--Roxin--Elliot--Kalton) strategies, the value function appears to be the same \cite{Yong2015}. However,
the existence of an equilibrium in closed-loop or nonanticipative strategies does not imply the existence of an equilibrium in
open-loop (programmed) strategies, as the classical example of the ``lady in the lake'' game indicates~\cite{BasarOlsder1995}.

As opposed to optimal control problems, Pontryagin's principle for zero-sum two-player differential games gives necessary
conditions only for saddle open-loop strategies, but not for saddle closed-loop ones. The main qualitative difference in the behavior
of characteristics for optimal control problems and differential games lies in the corner conditions for switching surfaces that are
reached on one side and left on the other side \cite{Bernhard1977,Bernhard2014}. While Pontryagin's theorem extends to optimal
control theory the Weierstrass--Erdmann condition stating that the adjoint function is continuous along an extremal trajectory,
differential game theory allows discontinuities there. In general, these singularities cannot be found by a local analysis along
isolated characteristics and require the construction of a complete field of extremals, leading to a global synthesis map.
The related notions of equivocal, envelope and focal manifolds are discussed in
\cite{Bernhard1977,Melikyan1998,MelikyanBernhard2005,Evans2014}. Thus, developing curse-of-dimensionality-free characteristics
approaches for wide classes of Hamilton--Jacobi--Isaacs equations in differential games turns out to be extremely difficult.

Given a fixed finite time horizon~$ T \in (0, +\infty) $, consider the control system of ordinary differential equations
\begin{equation}
\left\{ \begin{aligned}
& \dot{x}(t) \:\: = \:\: f(t, \, x(t), \, u_1(t), \, u_2(t)), \\
& u_i(t) \, \in \, U_i, \quad i = 1,2, \\
& t \in [0, T],
\end{aligned} \right.  \label{F_40_0}
\end{equation}
where $ \: x \, \colon \, [0, T] \to \mathbb{R}^n \: $ is a state function and
$ \: u_i \, \colon \, [0, T] \, \to \, U_i \subseteq \mathbb{R}^{m_i} $, $ i = 1,2, \: $ are measurable control
functions corresponding to two players, labelled $ 1 $ and $ 2 $.

Suppose that the aim of the player~$ 1 $ is to minimize a terminal payoff~$ \sigma(x(T)) $, while the player~$ 2 $ intends
to maximize it. Hence, we arrive at the zero-sum differential game for (\ref{F_40_0}) that can formally be written as
\begin{equation}
\sigma(x(T)) \:\: \longrightarrow \:\: \inf_{u_1(\cdot)} \, \sup_{u_2(\cdot)} \:\: \mbox{or} \:\:
\sup_{u_2(\cdot)} \, \inf_{u_1(\cdot)} \, .  \label{F_41}
\end{equation}

Let the game~(\ref{F_40_0}),~(\ref{F_41}) fulfill standard technical assumptions and the Isaacs condition, so that the closed-loop
game value function~$ V $ exists (its rigorous definition relies on specific mathematical constructions \cite[\S III.11]{Subbotin1995}
and is omitted here for the sake of brevity).

For $ t_0 \in [0, T) $ and $ i = 1,2 $, let $ \mathcal{U}^i_{t_0, \, T} $ be the class of measurable functions defined on
$ [t_0, T] $ with values in $ U_i $. If, moreover, $ x_0 \in \mathbb{R}^n $ and $ \: u_i(\cdot) \in \mathcal{U}^i_{t_0, \, T} $,
$ \, i = 1,2, \: $ then $ \: x(\cdot; \, t_0, x_0, u_1(\cdot), u_2(\cdot))) \: $ denotes the solution of (\ref{F_40_0})
satisfying $ x(t_0) = x_0 $ and corresponding to the open-loop control strategies~$ u_1(\cdot), u_2(\cdot) $. The lower open-loop
game value function (or, in other words, the programmed maximin function) is defined by
\begin{equation}
\begin{aligned}
& V^*(T, x_0) \: \stackrel{\mathrm{def}}{=} \: \sigma(x_0), \\
& V^*(t_0, x_0) \:\: \stackrel{\mathrm{def}}{=} \:\: \sup_{u_2(\cdot) \, \in \, \mathcal{U}^2_{t_0, \, T}} \,
\inf_{u_1(\cdot) \, \in \, \mathcal{U}^1_{t_0, \, T}} \: \sigma(x(T; \, t_0, x_0, u_1(\cdot), u_2(\cdot))) \\
& \forall \: (t_0, x_0) \: \in \: [0, T) \times \mathbb{R}^n.
\end{aligned}  \label{F_41_1}
\end{equation}

If the programmed maximin and closed-loop game value functions are equal to each other in the whole considered domain of
initial positions, the game is called regular.

In \cite[\S IV.5]{SubbotinChentsov1981}, a so-called programmed iteration procedure starting from the programmed maximin
function~$ V^* $ was proposed, and a result on its pointwise convergence to the closed-loop game value function~$ V $ was
established. Since the corresponding constructions are in general rather complicated, it is important to investigate
the problem classes for which the sought-for limit is exactly reached after a small number of iterations. 

The rest of this section is organized as follows. Example~\ref{Exa_26} presents a differential game for which
the closed-loop game value cannot be reached in a finite number of programmed iterations. Theorem~\ref{Thm_27} gives
a regularity criterion for a certain class of differential games, and Example~\ref{Exa_28} shows a trivial
regular game. Theorem~\ref{Thm_38} describes a problem class for which the closed-loop game value is reached exactly
after one step of the programmed iteration procedure, and Examples~\ref{Exa_39},~\ref{Exa_40} indicate two particular
applications (besides, numerical simulation results for Example~\ref{Exa_40} can be seen in Section~5). Another
related class of differential games is introduced in Appendix. Note that the mentioned problem classes were earlier
described in \cite{SubbotinChentsov1981}. However, their review still seems to be useful, because most of the corresponding
general results and examples were not presented in English-language scientific literature according to our knowledge.
Moreover, these results reduce the computation of the closed-loop game value at any selected position to finite-dimensional
optimization and thereby allow to mitigate the curse of dimensionality (analogously to the characteristics approaches
considered in Sections~2 and 3).

\begin{example}{\rm \cite[\S IV.6]{SubbotinChentsov1981}}  \label{Exa_26}  \rm
First, let us mention a particular problem of the form~(\ref{F_40_0}), (\ref{F_41}), where the programmed iterations converge to
the closed-loop game value but never reach the latter exactly:
$$
\left\{ \begin{aligned}
& \dot{x}(t) \:\: = \:\: u_1(t) \: + \: u_2(t), \quad t \in [0, T], \quad T = 1, \\
& n \, = \, m_1 \, = \, m_2 \, = \, 1, \quad U_1 \, = \, [-1, 1], \quad U_2 \, = \, [-2, 2], \\
& \sigma(x) \: = \: \min\limits_{y \in Y} \, |x - y| \quad \forall x \in \mathbb{R}, \quad
Y \: \stackrel{\mathrm{def}}{=} \: (-\infty, -1] \, \cup \, [1, +\infty).
\end{aligned} \right.
$$
\qed
\end{example}

Let us formulate a game regularity criterion for a linear system
\begin{equation}
\left\{ \begin{aligned}
& \dot{x}(t) \:\: = \:\: A(t) \, x(t) \: + \: B_1(t) \, u_1(t) \: + \: B_2(t) \, u_2(t), \\
& u_i(t) \, \in \, U_i, \quad i = 1,2, \\
& t \in [0, T].
\end{aligned} \right.  \label{F_40}
\end{equation}
The following basic conditions are supposed to hold (together with the Isaacs condition which is obviously satisfied for
(\ref{F_40}), they guarantee the existence of the closed-loop game value function~$ V $).

\begin{assumption}  \label{Ass_25}
The matrix functions $ \: A \, \colon \, [0, T] \to \mathbb{R}^{n \times n} \: $ and
$ \: B_i \, \colon \, [0, T] \to \mathbb{R}^{n \times m_i} ${\rm ,} $ i = 1,2, \: $ are continuous{\rm ,} and the sets
$ \: U_i \subset \mathbb{R}^{m_i} ${\rm ,} $ i = 1,2 ${\rm ,} are compact and convex. Moreover{\rm ,} the terminal game payoff
is determined by a Lipschitz continuous convex function $ \: \sigma \, \colon \, \mathbb{R}^n \to \mathbb{R} $.
\end{assumption}

We use the notations
\begin{equation}
\arraycolsep=1.5pt
\def\arraystretch{2}
\begin{array}{c}
\sigma^*(l) \:\: \stackrel{\mathrm{def}}{=} \:\: \sup\limits_{x \in \mathbb{R}^n} \{ \left< l, x \right> \: - \:
\sigma(x) \} \quad \forall l \in \mathbb{R}^n, \\
L \:\: \stackrel{\mathrm{def}}{=} \:\: \mathrm{dom} \: \sigma^* \:\: = \:\: \left\{ l \in \mathbb{R}^n \: \colon \:
\sigma^*(l) < +\infty \right\}
\end{array}  \label{F_42}
\end{equation}
(the set~$ L $ is bounded due to the Lipschitz continuity of $ \sigma $), and the Cauchy matrix function
$ \: [0, T] \ni t \: \longmapsto \: \Phi(T, t) \, \in \, \mathbb{R}^{n \times n} \: $ is the solution of
\begin{equation}
\left\{ \begin{aligned}
& \frac{d}{dt} \, \Phi(T, t) \: = \: -\Phi(T, t) \, A(t), \\
& \Phi(T, T) \: = \: I_{n \times n}
\end{aligned} \right.  \label{F_43}
\end{equation}
($ I_{n \times n} $ denotes the unit matrix in $ \mathbb{R}^{n \times n} $).

\begin{theorem}{\rm (\cite[\S III.16.1]{Subbotin1995}, \cite[\S III.5]{SubbotinChentsov1981},
\cite[\S 5.2, \S 5.4]{KrasovskiiSubbotin1988})}  \label{Thm_27}
Let Assumption~{\rm \ref{Ass_25}} hold. For all $ \: (t_0, x_0) \, \in \, [0, T] \times \mathbb{R}^n, \: $ the programmed
maximin function for {\rm (\ref{F_40}), (\ref{F_41})} is determined by
\begin{equation}
\begin{aligned}
& V^*(t_0, x_0) \:\: = \:\: \max\limits_{l \, \in \, L} \, \varphi(t_0, x_0, l), \\
& \varphi(t_0, x_0, l) \:\: \stackrel{\mathrm{def}}{=} \:\: \left< l, \, \Phi(T, t_0) \, x_0 \right> \:\: + \:\:
\int\limits_{t_0}^T \varkappa_1(T, t, l) \: dt \:\: + \\
& \qquad\qquad\qquad\qquad\quad \:\:\:\,
+ \:\: \int\limits_{t_0}^T \varkappa_2(T, t, l) \: dt \:\: - \:\: \sigma^*(l), \\
& \varkappa_1(T, t, l) \:\: \stackrel{\mathrm{def}}{=} \:\: \min\limits_{u_1 \, \in \, U_1}
\left< l, \, \Phi(T, t) \, B_1(t) \, u_1 \right>, \\
& \varkappa_2(T, t, l) \:\: \stackrel{\mathrm{def}}{=} \:\: \max\limits_{u_2 \, \in \, U_2}
\left< l, \, \Phi(T, t) \, B_2(t) \, u_2 \right>.
\end{aligned}  \label{F_44}
\end{equation}
A necessary and sufficient condition for its coincidence with the closed-loop game value function~$ V $ on
$ \, [0, T] \times \mathbb{R}^n \, $ {\rm (}i.\,e.{\rm ,} the game regularity criterion{\rm )} is
\begin{equation}
\begin{aligned}
& \max\limits_{u_2 \, \in \, U_2} \, \min\limits_{u_1 \, \in \, U_1} \, \max\limits_{l \, \in \, L^*(t, x)} \,
\{ \left< l, \, \Phi(T, t) \, B_1(t) \, u_1 \right> \: - \: \varkappa_1(T, t, l) \:\: + \\
& \qquad\qquad\qquad\quad \:\:\:\,
+ \: \left< l, \, \Phi(T, t) \, B_2(t) \, u_2 \right> \: - \: \varkappa_2(T, t, l) \} \:\: \leqslant \:\: 0 \\
& \forall \: (t, x) \: \in \: [0, T) \times \mathbb{R}^n,
\end{aligned}  \label{F_45}
\end{equation}
where
\begin{equation}
L^*(t, x) \:\: \stackrel{\mathrm{def}}{=} \:\: \mathrm{Arg} \max_{l \, \in \, L} \: \varphi(t, x, l) \quad
\forall \: (t, x) \: \in \: [0, T] \times \mathbb{R}^n.  \label{F_47}
\end{equation}
In particular{\rm ,} if the reduction
\begin{equation}
L \: \ni \: l \:\: \longmapsto \:\: \varphi(t, x, l)  \label{F_46}
\end{equation}
is concave for any $ \: (t, x) \, \in \, [0, T] \times \mathbb{R}^n, \: $ then $ V^* \equiv V $ on
$ \, [0, T] \times \mathbb{R}^n $. If the set~{\rm (\ref{F_47})} consists of a single element~$ l^*(t, x) $ for any
$ \: (t, x) \, \in \, [0, T] \times \mathbb{R}^n \: $ {\rm (}a sufficient condition for that is the strict concavity of
{\rm (\ref{F_46})} for all such $ (t, x) ${\rm ),} then{\rm ,} almost everywhere on the time interval~$ [0, T] ${\rm ,}
the control functions corresponding to the programmed maximin can be chosen according to the extremal aiming rule
$$
u^*_i(t) \: \in \: U^*_i(t, x(t)), \quad i = 1,2,
$$
with the feedback maps
\begin{equation}
\begin{aligned}
& U^*_i(t, x) \:\: = \:\: \{ u_i \in U_i \: \colon \: \left< l^*(t, x), \: \Phi(T, t) \, B_i(t) \, u_i \right> \:\, = \:\,
\varkappa_i \left(T, \, t, \, l^*(t, x) \right) \}, \\
& i = 1,2, \quad \forall \: (t, x) \: \in \: [0, T] \times \mathbb{R}^n.
\end{aligned}  \label{F_48}
\end{equation}
\end{theorem}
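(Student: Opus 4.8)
The plan is to establish the theorem in three stages: first the closed form \eqref{F_44} for the programmed maximin $V^*$, then the regularity criterion \eqref{F_45}, and finally the concavity corollary together with the feedback maps \eqref{F_48}. For the first stage I would write the terminal state by variation of parameters, $x(T)=\Phi(T,t_0)\,x_0+\int_{t_0}^T\Phi(T,s)\,[B_1(s)u_1(s)+B_2(s)u_2(s)]\,ds$, noting that the solution $\Phi(T,\cdot)$ of \eqref{F_43} is exactly the state transition matrix. Since $\sigma$ is convex and Lipschitz (hence closed), the Fenchel--Moreau theorem gives $\sigma(x)=\max_{l\in L}\{\langle l,x\rangle-\sigma^*(l)\}$ with $L=\mathrm{dom}\,\sigma^*$ compact, so that $\sigma(x(T))$ is the maximum over $l\in L$ of an expression affine in $u_1(\cdot)$, affine in $u_2(\cdot)$, and concave (affine minus $\sigma^*$) in $l$. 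For \emph{fixed} $u_2(\cdot)$ the payoff is genuinely concave in $l$, so Sion's minimax theorem permits the exchange of $\inf_{u_1}$ and $\max_l$; afterwards $\sup_{u_2}$ and $\max_l$ commute freely. Evaluating the remaining $\inf_{u_1}$ and $\sup_{u_2}$ pointwise in time, which is legitimate by a measurable--selection argument, produces $\int_{t_0}^T\varkappa_1\,dt$ and $\int_{t_0}^T\varkappa_2\,dt$ and yields \eqref{F_44}. The asymmetry is worth stressing: for the dual (programmed minimax) the analogous payoff is not concave in $l$, which is exactly why $V^*$ need not equal the upper value.

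For the regularity criterion I would use that the closed-loop value $V$ is the \emph{unique} minimax solution of the Hamilton--Jacobi--Isaacs equation with Isaacs Hamiltonian $\mathcal H(t,x,p)=\langle p,A(t)x\rangle+\min_{u_1\in U_1}\langle p,B_1(t)u_1\rangle+\max_{u_2\in U_2}\langle p,B_2(t)u_2\rangle$, equivalently the unique function satisfying both one-sided stability inequalities of Krasovskii--Subbotin. The maximin $V^*=\max_{l\in L}\varphi(\cdot,\cdot,l)$ automatically satisfies the inequality associated with the maximizing player (his open-loop guarantee persists in closed loop), so $V^*\le V$, and the whole content lies in the other inequality. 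Because $\varphi$ is smooth with $\mathrm{D}_x\varphi(t,x,l)=\Phi(T,t)^{\top}l$ and $\partial_t\varphi(t,x,l)=-\langle\Phi(T,t)^{\top}l,A(t)x\rangle-\varkappa_1(T,t,l)-\varkappa_2(T,t,l)$, Danskin's theorem gives the one-sided derivative of $V^*$ along the dynamics as a maximum over the active set $L^*(t,x)$ from \eqref{F_47}:
\[
d^{+}V^*\bigl(t,x;\,1,\,A(t)x+B_1(t)u_1+B_2(t)u_2\bigr)
=\max_{l\in L^*(t,x)}\bigl\{\langle l,\Phi(T,t)B_1(t)u_1\rangle-\varkappa_1(T,t,l)+\langle l,\Phi(T,t)B_2(t)u_2\rangle-\varkappa_2(T,t,l)\bigr\},
\]
the $A(t)x$ contributions cancelling. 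The remaining stability inequality reads $\max_{u_2}\min_{u_1}d^{+}V^*(\,\cdots)\le 0$, which is precisely \eqref{F_45}. Hence $V^*$ is a minimax solution iff \eqref{F_45} holds, and by uniqueness $V^*\equiv V$ iff \eqref{F_45}.

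The concavity corollary is cleanest in the singleton case $L^*(t,x)=\{l^*(t,x)\}$: the inner maximum collapses, $\min_{u_1}\langle l^*,\Phi(T,t)B_1u_1\rangle=\varkappa_1(T,t,l^*)$ cancels the first pair, and $\max_{u_2}\{\langle l^*,\Phi(T,t)B_2u_2\rangle-\varkappa_2(T,t,l^*)\}=0$, so the left-hand side of \eqref{F_45} equals $0$. Strict concavity of \eqref{F_46} forces a unique maximizer everywhere and thus $V^*\equiv V$; plain concavity then follows by perturbing $\varphi$ with a small strictly concave term and passing to the limit through the stability of the value under data perturbation. The feedback maps \eqref{F_48} are read off from the same computation: where $l^*(t,x)$ is unique, $V^*$ is differentiable with $\mathrm{D}_xV^*=\Phi(T,t)^{\top}l^*$, and the controls attaining $\varkappa_1,\varkappa_2$ are exactly the pointwise Hamiltonian optimizers $U_1^*,U_2^*$ used when evaluating $\inf_{u_1},\sup_{u_2}$ in the first stage; integrating them along the resulting trajectory returns $\varphi(t_0,x_0,l^*)=V^*(t_0,x_0)$, confirming that the extremal aiming rule realizes the maximin.

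The hard part will be the second stage. Two points need care: showing that the maximin $V^*$ is \emph{automatically} stable for the maximizing player, which rests on the Krasovskii--Subbotin stable-bridge / programmed-iteration machinery rather than on the explicit formula; and justifying Danskin's directional-derivative identity for $V^*=\max_{l\in L}\varphi$, for which I would invoke compactness of $L$, joint continuity of $(t,x,l)\mapsto\mathrm{D}\varphi$, and upper semicontinuity of the set-valued map $L^*(\cdot)$, so that the one-sided derivative genuinely equals the maximum over the active set and the quantifier order $\max_{u_2}\min_{u_1}\max_{l\in L^*}$ appearing in \eqref{F_45} is obtained intact.
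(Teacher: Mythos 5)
You should first note that the paper contains no proof of Theorem~\ref{Thm_27}: it is imported verbatim, with citations to Subbotin (1995), Subbotin--Chentsov (1981) and Krasovskii--Subbotin (1988), so your proposal can only be judged against the classical arguments in those references --- which, in structure, it correctly reproduces. Stage one (variation of parameters, Fenchel--Moreau to write $\sigma(x(T))=\max_{l\in L}\{\langle l,x(T)\rangle-\sigma^*(l)\}$, a Sion-type swap of $\inf_{u_1}$ with $\max_l$, then pointwise evaluation of $\inf_{u_1}$, $\sup_{u_2}$ under the integral) is exactly the standard derivation of (\ref{F_44}); the swap is legitimate since the payoff is affine in $u_1(\cdot)$ over a convex weak*-compact set and concave, upper semicontinuous in $l$. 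Stage two is also the classical mechanism: $V^*\leqslant V$ and the ``maximizer-side'' stability of $V^*$ are automatic, so regularity is equivalent to the minimizer-side ($u$-)stability of $V^*$, whose infinitesimal form, via the Danskin formula for $\max_{l\in L}\varphi$ and the cancellation of the $A(t)x$ terms, is precisely (\ref{F_45}). One technical point worth stating: Danskin applies here even though $\sigma^*$ is only lower semicontinuous, because the $(t,x)$-dependent part of $\varphi$ is additively separated from $-\sigma^*(l)$, so difference quotients of $V^*$ in $(t,x)$ never involve $\sigma^*$.

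Two places need more than you wrote. First, for the merely concave case, ``perturbing $\varphi$ with a small strictly concave term'' is meaningful only if the perturbation is induced by admissible game data, since regularity is a property of the game, not of a formula. It is: adding $\varepsilon\|l\|^2$ to $\sigma^*$ amounts to replacing $\sigma$ by its Moreau envelope (inf-convolution with $\|\cdot\|^2/(4\varepsilon)$), which is again convex and Lipschitz with the same constant, and $0\leqslant\sigma-\sigma_\varepsilon\leqslant\varepsilon\,(\mathrm{Lip}\,\sigma)^2$ forces uniform convergence of both $V^*_\varepsilon$ and $V_\varepsilon$; this identification is the entire content of the step and must be made explicit. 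Be aware also that you cannot shortcut this by applying Sion to $\min_{u_1}\max_{l\in L^*(t,x)}$ in (\ref{F_45}): the bracket there is convex, not concave, in $l$ (the term $-\varkappa_1$ is convex), so the swap is unavailable --- your argument survives only because $L^*$ is a singleton after perturbation. The classical texts avoid the issue differently: concavity of $\varphi(t,x,\cdot)$ together with convexity and compactness of the reachable sets lets one apply Sion at finite horizon (minimum over reachable states against maximum over $L$) and verify $u$-stability directly, with no perturbation and no infinitesimal criterion. Second, in the feedback part, the claim that integrating the selections (\ref{F_48}) ``returns $\varphi(t_0,x_0,l^*)=V^*(t_0,x_0)$'' implicitly uses that $l^*(t,x(t))$ stays constant along the extremal motion, so that the realized open-loop control $u_2^*(\cdot)$ is extremal for the fixed direction $l^*(t_0,x_0)$ and hence optimal in the programmed maximin; this follows by a bootstrap from uniqueness of the maximizer and continuity of $l^*(\cdot)$ (upper semicontinuity of $L^*$ plus singleton values), and, together with measurability of the selections, should be spelled out rather than asserted.
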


\begin{example}{\rm \cite[\S IV.5]{SubbotinChentsov1981}}  \label{Exa_28}  \rm
In the following case, the game of the form~(\ref{F_40}), (\ref{F_41}) is regular:
$$
\left\{ \begin{aligned}
& \dot{x}(t) \:\: = \:\: u_1(t) \: + \: u_2(t), \quad t \in [0, T], \quad \mbox{$ T > 0 $ is fixed}, \\
& \mbox{$ U_i $ is the closed ball in $ \mathbb{R}^n $ with center at the origin and radius~$ a_i > 0 $}, \:\: i = 1,2, \\
& \sigma(x) \: = \: \| x \| \quad \forall x \in \mathbb{R}^n.
\end{aligned} \right.
$$
By using Theorem~\ref{Thm_27}, the related programmed maximin and closed-loop game value functions can be represented~as
$$
\begin{aligned}
& V^*(t, x) \:\: = \:\: V(t, x) \:\: = \:\: \max \{ \| x \| \: + \: (a_2 - a_1) \, (T - t), \:\: 0 \} \\
& \forall \: (t, x) \: \in \: [0, T] \times \mathbb{R}^n.
\end{aligned}
$$
\qed
\end{example}

Next, let us describe a problem class for which a single programmed iteration allows to obtain the closed-loop game value exactly.
The corresponding result can be formulated for a state-affine system with a more general control input term:
\begin{equation}
\left\{ \begin{aligned}
& \dot{x}(t) \:\: = \:\: A(t) \, x(t) \: + \: \hat{f}(t, \, u_1(t), \, u_2(t)), \\
& u_i(t) \, \in \, U_i, \quad i = 1,2, \\
& t \in [0, T].
\end{aligned} \right.  \label{F_61}
\end{equation}
The system (\ref{F_40}) is the special case of (\ref{F_61}) with
$ \: \hat{f}(t, u_1(t), u_2(t)) \: = \: B_1(t) u_1(t) \, + \, B_2(t) u_2(t) $.

\begin{assumption}  \label{Ass_35}
$ \hat{f} \, \colon \, [0, T] \times U_1 \times U_2 \, \to \, \mathbb{R}^n \: $ is a continuous function.
\end{assumption}

We need additional notations. Take any $ j \in \mathbb{N} $. Denote the origin in $ \mathbb{R}^j $ by $ O_j $ and
the Euclidean norm in $ \mathbb{R}^j $ by $ \| \cdot \|_j $ (previously we denoted this norm by
$ \| \cdot \|_{\mathbb{R}^j} $ or simply $ \| \cdot \| $). The corresponding unit sphere in $ \mathbb{R}^j $ is
\begin{equation}
L_j \:\: \stackrel{\mathrm{def}}{=} \:\: \left\{ l \in \mathbb{R}^j \: \colon \: \| l \|_j = 1 \right\}.  \label{F_49}
\end{equation}
For all $ \: y \, = \, (y_1, y_2, \ldots, y_j) \, \in \, \mathbb{R}^j \: $ and $ \varepsilon \geqslant 0 $, denote also
\begin{equation}
\mathcal{B}_j(y, \varepsilon) \:\: \stackrel{\mathrm{def}}{=} \:\: \left\{ v \in \mathbb{R}^j \: \colon \:
\| v - y \|_j \, \leqslant \, \varepsilon \right\}.  \label{F_50}
\end{equation}
For any nonempty convex compact set $ K \subset \mathbb{R}^j $, denote its support function by $ s(\cdot; \, K) $.
For any $ \: j \in \{1, 2, \ldots, n\} \: $ and $ x \in \mathbb{R}^n $, let $ \{x\}_j \in \mathbb{R}^j $ be the vector of
the first $ j $ coordinates of $ x $. For any set $ X \subseteq \mathbb{R}^n $, denote
$ \: \{X\}_j \: \stackrel{\mathrm{def}}{=} \: \{ \{x\}_j \, \colon \, x \in X \} \: \subseteq \: \mathbb{R}^j $.

Furthermore, suppose a particular form of the terminal payoff.

\begin{assumption}  \label{Ass_29}
$ k \in \{1, 2, \ldots, n\} \: $ is a fixed number{\rm ,} $ \mathcal{M} \subset \mathbb{R}^k $ is a nonempty convex compact
set{\rm ,} and
\begin{equation}
\sigma(x) \:\: = \:\: \min\limits_{y \, \in \, \mathcal{M}} \left\| \{ x \}_k \, - \, y \right\|_k \quad
\forall x \in \mathbb{R}^n.  \label{F_51}
\end{equation}
\end{assumption}

The programmed maximin function can be characterized as follows.

\begin{proposition}{\rm \cite[\S V.1]{SubbotinChentsov1981}}  \label{Pro_36}
Under Assumptions~{\rm \ref{Ass_25}, \ref{Ass_35}, \ref{Ass_29},} the programmed maximin for {\rm (\ref{F_61}), (\ref{F_41})} at
any position $ \: (t_0, x_0) \: \in \: [0, T] \times \mathbb{R}^n \: $ is determined by
\begin{equation}
V^*(t_0, x_0) \: = \: \max \left\{ \tilde{V}^*(t_0, x_0), \: 0 \right\},  \label{F_62}
\end{equation}
where
\begin{equation}
\begin{aligned}
& \tilde{V}^*(t_0, x_0) \:\: \stackrel{\mathrm{def}}{=} \:\: \max\limits_{l \, \in \, L_k} \,
\left\{ \left< l, \, \{ \Phi(T, t_0) \, x_0 \}_k \right> \:\: + {}^{{}^{{}^{{}^{{}^{{}^{{}^{{}^{{}^{}}}}}}}}} \right. \\
& \qquad
\left. + \:\: \int\limits_{t_0}^T \, \max\limits_{u_2 \, \in \, U_2} \, \min\limits_{u_1 \, \in \, U_1} \,
\left< l, \: \left\{ \Phi(T, t) \, \hat{f}(t, u_1, u_2) \right\}_k \right> \: dt \:\: - \:\: s(l; \, \mathcal{M}) \right\}.
\end{aligned}  \label{F_63}
\end{equation}
For the system~{\rm (\ref{F_40})} with a linear control input term{\rm ,} the representation~{\rm (\ref{F_63})} transforms into
\begin{equation}
\begin{aligned}
& \tilde{V}^*(t_0, x_0) \:\: = \:\: \max\limits_{l \, \in \, L_k} \,
\left\{ \left< l, \, \{ \Phi(T, t_0) \, x_0 \}_k \right> \:\: + {}^{{}^{{}^{{}^{{}^{{}^{{}^{{}^{{}^{}}}}}}}}} \right. \\
& \qquad\qquad \:\:
+ \:\: \int\limits_{t_0}^T \min\limits_{u_1 \, \in \, U_1}
\left< l, \: \{ \Phi(T, t) \, B_1(t) \, u_1(t) \}_k \right> \: dt \:\: + \\
& \qquad\qquad \:\:
\left. + \:\: \int\limits_{t_0}^T \max\limits_{u_2 \, \in \, U_2}
\left< l, \: \{ \Phi(T, t) \, B_2(t) \, u_2(t) \}_k \right> \: dt \:\: - \:\: s(l; \, \mathcal{M}) \right\}.
\end{aligned}  \label{F_53}
\end{equation}
\end{proposition}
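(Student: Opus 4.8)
The plan is to collapse the functional maximin onto a finite-dimensional concave maximization over the dual variable $l$, exploiting three structural features: the state-affine dynamics, the convexity of the distance payoff, and a Lyapunov-type convexity of the first player's reachable contribution. First I would record the closed-form solution of $\dot x = A(t)x + \hat f$. Since the Cauchy matrix $\Phi(T,\cdot)$ in (\ref{F_43}) is exactly the state transition matrix of $\dot x = A(t)x$, variation of parameters yields, for any admissible controls,
\[
\{x(T)\}_k \;=\; \{\Phi(T,t_0)\,x_0\}_k \;+\; \int_{t_0}^T \{\Phi(T,s)\,\hat f(s, u_1(s), u_2(s))\}_k\,ds .
\]
Next I would dualize the payoff (\ref{F_51}): because $\mathcal{M}$ is convex and compact, the Euclidean distance admits $\sigma(x) = \max_{l\in\mathcal{B}_k(O_k,1)}\{\langle l,\{x\}_k\rangle - s(l;\mathcal{M})\}$, obtained by writing $\|{\cdot}\|_k$ as a support over the unit ball and swapping the resulting min and max of a bilinear pairing on two convex compacta. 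After substitution, the optimized quantity is concave in $l$ (affine plus the concave term $-s(l;\mathcal{M})$) and linear in the control contribution.

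The heart of the argument is the inner infimum $\inf_{u_1(\cdot)}$ for a frozen $u_2(\cdot)$. I would introduce the reachable contribution
\[
R(u_2(\cdot)) \;=\; \left\{ \int_{t_0}^T \{\Phi(T,s)\,\hat f(s, u_1(s), u_2(s))\}_k\,ds \;:\; u_1(\cdot)\in\mathcal{U}^1_{t_0, T} \right\} \subseteq \mathbb{R}^k ,
\]
which is convex and compact by Aumann's / Lyapunov's theorem on integrals of set-valued maps (the integrand has compact values, is integrably bounded since $U_1$ is compact and $\hat f,\Phi$ are continuous, and Lebesgue measure is nonatomic). Writing $w_0 = \{\Phi(T,t_0)x_0\}_k$, the payoff depends on $u_1(\cdot)$ only through $r\in R(u_2(\cdot))$, so $\inf_{u_1}\sigma(x(T)) = \min_{r\in R}\,\mathrm{dist}(w_0+r,\mathcal{M})$, a convex function minimized over a convex compact set. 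Expressing the distance in its dual form and applying Sion's minimax theorem (linear in $r$ on the convex compact $R$, concave in $l$ on the convex compact unit ball) permits swapping $\min_r$ and $\max_l$; the resulting $\min_{r\in R}\langle l,r\rangle$ equals $\int_{t_0}^T \min_{u_1\in U_1}\langle l,\{\Phi(T,s)\hat f(s,u_1,u_2(s))\}_k\rangle\,ds$, since the support function of a set-valued integral is the integral of the support functions.

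Finally I would take $\sup_{u_2(\cdot)}$. As this is a supremum, it commutes with the outer $\max_l$; and because the integrand is optimized pointwise in $s$, a measurable-selection argument gives $\sup_{u_2(\cdot)}\int \min_{u_1}\langle\cdots\rangle\,ds = \int_{t_0}^T \max_{u_2\in U_2}\min_{u_1\in U_1}\langle l,\{\Phi(T,s)\hat f\}_k\rangle\,ds$, which is precisely the bracket in (\ref{F_63}). That bracket is positively homogeneous of degree one in $l$, so maximizing over the closed unit ball $\mathcal{B}_k(O_k,1)$ equals $\max\{\tilde V^*,0\}$ (the value at $l=O_k$ being $0$), giving (\ref{F_62}). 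For the linear input of (\ref{F_40}), $\hat f = B_1 u_1 + B_2 u_2$ makes the inner $\max_{u_2}\min_{u_1}$ split additively, producing (\ref{F_53}). The main obstacle is the rigorous justification of the convexity and compactness of $R(u_2(\cdot))$ together with the two interchange steps (support function versus integral, and $\sup_{u_2}$ versus integral), all of which rest on measurable-selection theory; once these are secured, the two minimax swaps are routine, and under Assumptions~\ref{Ass_25},~\ref{Ass_35},~\ref{Ass_29} every optimum is attained by compactness.
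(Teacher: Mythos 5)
Your proof is correct. Note that the paper itself gives no argument for this proposition: it is stated as a cited result from \cite[\S V.1]{SubbotinChentsov1981}, so the only meaningful comparison is with the classical derivation in that source, and your reconstruction follows essentially that route. The chain you propose --- Cauchy formula via $\Phi(T,\cdot)$, dualization of the distance payoff as $\sigma(x)=\max_{\|l\|\leqslant 1}\{\langle l,\{x\}_k\rangle - s(l;\mathcal{M})\}$ (equivalently, $\sigma^* = s(\cdot;\mathcal{M})$ on the unit ball, $+\infty$ outside, which is how the general representation~(\ref{F_44}) specializes under Assumption~\ref{Ass_29}), convexity and compactness of the first player's reachable contribution via the Aumann--Lyapunov theorem, the Sion swap of $\min_r$ and $\max_l$, the support-function-of-the-integral identity, a measurable selection for the outer supremum over $u_2(\cdot)$, and finally positive homogeneity of degree one to replace the ball by the sphere together with the value $0$ at $l=O_k$ --- is exactly the standard machinery of the Krasovskii--Subbotin school for programmed maximin functions of state-affine games with convex payoffs. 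The only steps demanding genuine care are the two you explicitly flag: identifying $R(u_2(\cdot))$ with the Aumann integral (which needs Filippov's implicit-function lemma to convert an arbitrary integrable selection back into a measurable $u_1(\cdot)$), and the interchange of $\sup_{u_2(\cdot)}$ with the time integral (which needs a measurable maximizer selection; joint continuity of $(s,u_2)\mapsto \min_{u_1\in U_1}\langle l,\{\Phi(T,s)\hat f(s,u_1,u_2)\}_k\rangle$ supplies it). Both are secured under Assumptions~\ref{Ass_25}, \ref{Ass_35}, \ref{Ass_29}, and the additive splitting of $\max_{u_2}\min_{u_1}$ for $\hat f = B_1u_1+B_2u_2$ yielding~(\ref{F_53}) is immediate, so there is no gap.
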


One more assumption is required.

\begin{assumption}  \label{Ass_37}
There exist continuous functions $ \: h \, \colon \, [0, T] \to \mathbb{R} ${\rm ,}
$ \: \tilde{z} \, \colon \, [0, T] \to \mathbb{R}^k \: $ and a mapping $ \Pi $ of $ [0, T] $ into the family of all nonempty
convex compact sets in $ \mathbb{R}^k ${\rm ,} such that
\begin{equation}
\begin{aligned}
& \int\limits_t^T \, \max\limits_{u_2 \, \in \, U_2} \, \min\limits_{u_1 \, \in \, U_1} \,
\left< l, \: \left\{ \Phi(T, \xi) \, \hat{f}(\xi, u_1, u_2) \right\}_k \right> \: d \xi \:\: - \:\: s(l; \, \mathcal{M}) \\
& \qquad\qquad\qquad\qquad\qquad \:\:
= \:\: \left< l, \, \tilde{z}(t) \right> \:\: - \:\: s(l; \, \Pi(t)) \:\: + \:\: h(t) \\
& \forall \: (t, l) \: \in \: [0, T] \times L_k
\end{aligned}  \label{F_64}
\end{equation}
and
\begin{equation}
\max\limits_{l \, \in \, L_k} \, \{ \left< l, y \right> \: - \: s(l; \, \Pi(t)) \} \:\: \geqslant \:\: 0 \quad
\forall \: (t, y) \: \in \: [0, T] \times \mathbb{R}^k.  \label{F_65}
\end{equation}
\end{assumption}

The sought-after result can now be formulated.

\begin{theorem}{\rm \cite[\S V.1]{SubbotinChentsov1981}}  \label{Thm_38}
Under Assumptions~{\rm \ref{Ass_25}, \ref{Ass_35}, \ref{Ass_29}, \ref{Ass_37},} the lower {\rm (}$ \sup $--$ \inf ${\rm )}
closed-loop game value function for {\rm (\ref{F_61}), (\ref{F_41})} is represented as
\begin{equation}
\begin{aligned}
& V_{\mathrm{low}}(t_0, x_0) \:\: = \:\: \max \, \left\{ V^*(t_0, x_0), \:\: \max\limits_{t \, \in \, [t_0, T]} \, h(t) \right\} \\
& \forall \: (t_0, x_0) \: \in \: [0, T] \times \mathbb{R}^n,
\end{aligned}  \label{F_66}
\end{equation}
where $ V^* $ is the programmed maximin function specified in Proposition~{\rm \ref{Pro_36}}. If{\rm ,} moreover{\rm ,}
the Isaacs condition
\begin{equation}
\begin{aligned}
& \min_{u_1 \, \in \, U_1} \, \max_{u_2 \, \in \, U_2} \, \left< p, \, \hat{f}(t, u_1, u_2) \right> \:\: = \:\:
\max_{u_2 \, \in \, U_2} \, \min_{u_1 \, \in \, U_1} \, \left< p, \, \hat{f}(t, u_1, u_2) \right> \\
& \: \forall \: (t, p) \: \in \: [0, T] \times \mathbb{R}^n
\end{aligned}  \label{F_67}
\end{equation}
holds{\rm ,} then there exists a closed-loop game value function{\rm ,} i.\,e.{\rm ,}
$$
V(t_0, x_0) \: = \: V_{\mathrm{low}}(t_0, x_0) \: = \: V_{\mathrm{up}}(t_0, x_0) \quad
\forall \: (t_0, x_0) \: \in \: [0, T] \times \mathbb{R}^n,
$$
and{\rm ,} for all $ \: (t, x) \: \in \: [0, T] \times \mathbb{R}^n, \: $ the corresponding saddle feedback strategies can be
determined~by
$$
\bar{u}_i(t, x) \: \in \: \bar{U}_i(t, x),
$$
where the sets $ \: \bar{U}_i(t, x) ${\rm ,} $ i = 1,2, \: $ are described as follows{\rm :}
\begin{equation}
\begin{aligned}
& \bar{U}_i(t, x) \, = \, U_i, \:\: i = 1,2, \quad \mathrm{if} \:\:\: \tilde{V}^*(t, x) \: \leqslant \: \max \, \{ 0, \, h(t) \}, \\
& \bar{L}(t, x) \:\: \stackrel{\mathrm{def}}{=} \:\: \mathrm{Arg} \, \max\limits_{l \in L_k} \,
\left\{ \left< l, \: \{ \Phi(T, t) \, x \}_k \, + \, \tilde{z}(t) \right> \: - \:
s(l; \, \Pi(t)) \right\}, \\
& \bar{L}(t, x) \: = \: \left\{ \bar{l}(t, x) \right\} \:\:\: \mbox{\rm is singleton} \:\:\:
\mathrm{if} \:\:\: \tilde{V}^*(t, x) \: > \: \max \, \{ 0, \, h(t) \}, \\
& \bar{U}_1(t, x) \:\: = \:\: \left\{ \bar{u}_1 \in U_1 \: \colon \:
\max\limits_{u_2 \, \in \, U_2} \, \left< \bar{l}(t, x), \: \left\{ \Phi(T, t) \,
\hat{f} \left( t, \bar{u}_1, u_2 \right) \right\}_k \right> \:\: = \right. \\
& \qquad\qquad\qquad\quad \:
\left. = \:\: \min\limits_{u_1 \, \in \, U_1} \, \max\limits_{u_2 \, \in \, U_2} \,
\left< \bar{l}(t, x), \: \left\{ \Phi(T, t) \, \hat{f} \left( t, u_1, u_2 \right) \right\}_k \right> \right\} \\
& \qquad\qquad\qquad\qquad\qquad\qquad\qquad\qquad\qquad\quad
\mathrm{if} \:\:\: \tilde{V}^*(t, x) \: > \: \max \, \{ 0, \, h(t) \}, \\
& \bar{U}_2(t, x) \:\: = \:\: \left\{ \bar{u}_2 \in U_2 \: \colon \:
\min\limits_{u_1 \, \in \, U_1} \, \left< \bar{l}(t, x), \: \left\{ \Phi(T, t) \,
\hat{f} \left( t, u_1, \bar{u}_2 \right) \right\}_k \right> \:\: = \right. \\
& \qquad\qquad\qquad\quad \:
\left. = \:\: \max\limits_{u_2 \, \in \, U_2} \, \min\limits_{u_1 \, \in \, U_1} \,
\left< \bar{l}(t, x), \: \left\{ \Phi(T, t) \, \hat{f} \left( t, u_1, u_2 \right) \right\}_k \right> \right\} \\
& \qquad\qquad\qquad\qquad\qquad\qquad\qquad\qquad\qquad\quad
\mathrm{if} \:\:\: \tilde{V}^*(t, x) \: > \: \max \, \{ 0, \, h(t) \}.
\end{aligned}  \label{F_68}
\end{equation}
\end{theorem}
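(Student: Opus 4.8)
The plan is to follow the programmed iteration scheme of \cite[\S IV.5, \S V.1]{SubbotinChentsov1981}, whose pointwise convergence to the closed-loop game value is already known, and to show that under Assumption~\ref{Ass_37} a single iteration from the programmed maximin~$V^*$ already produces the right-hand side of (\ref{F_66}). First I would rewrite the programmed maximin of Proposition~\ref{Pro_36}: substituting the decomposition (\ref{F_64}) into (\ref{F_62})--(\ref{F_63}) gives
$$
\tilde{V}^*(t, x) \:\: = \:\: \max_{l \in L_k} \left\{ \langle l, \, \{ \Phi(T,t) x \}_k + \tilde{z}(t) \rangle \: - \: s(l; \, \Pi(t)) \right\} \: + \: h(t),
$$
and by (\ref{F_65}) the bracketed maximum is nonnegative, so that $\tilde{V}^*(t,x) \geq h(t)$ and $V^*(t,x) \geq \max\{0, h(t)\}$ pointwise. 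This identifies $h(t)$ as a state-independent guarantee that the maximizing player can hope to realize at the intermediate instant~$t$, which is exactly the quantity entering (\ref{F_66}).

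Next I would carry out one step $V_1$ of the maximin programmed iteration, in which the second (maximizing) player maximizes over open-loop controls and over an intermediate stopping instant~$\tau \in [t_0, T]$ the value $V^*(\tau, x(\tau))$ secured from $\tau$ onward, while the first player minimizes over open-loop controls; schematically $V_1(t_0, x_0) = \sup_{u_2} \inf_{u_1} \max_{\tau \in [t_0, T]} V^*(\tau, x(\tau))$. The crucial algebraic simplification comes from the state-affine dynamics (\ref{F_61}): since $\frac{d}{dt}[\Phi(T,t) x(t)] = \Phi(T,t) \hat{f}(t, u_1(t), u_2(t))$ by (\ref{F_43}), one has
$$
\{ \Phi(T,\tau) x(\tau) \}_k \:\: = \:\: \{ \Phi(T,t_0) x_0 \}_k \: + \: \int_{t_0}^{\tau} \{ \Phi(T,t) \hat{f}(t, u_1(t), u_2(t)) \}_k \, dt,
$$
so that, for each fixed direction $l \in L_k$, the functional maximized by player~$2$ is affine in the control history. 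The embedded $\max_{u_2} \min_{u_1}$ of (\ref{F_64}) together with the convexity of $\mathcal{M}$ and of the sets $\Pi(t)$ is then what lets me interchange the outer $\sup_{u_2} \inf_{u_1}$ with the inner maxima over $l$ and~$\tau$, collapsing the whole expression to $\max\{ V^*(t_0, x_0), \, \max_{\tau \in [t_0,T]} h(\tau) \}$. I expect this interchange --- justifying that pointwise-in-time optimal aiming reproduces the open-loop maximin up to the intermediate guarantee --- to be the main obstacle, and it is precisely where the decomposition (\ref{F_64}) and the nonnegativity (\ref{F_65}) are indispensable.

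Having obtained $V_1$, I would verify that it is a fixed point of the iteration (a second application leaves it unchanged, because the floor $\max_\tau h(\tau)$ is already attained) and invoke the convergence theorem of \cite[\S IV.5]{SubbotinChentsov1981} to conclude $V_{\mathrm{low}} = V_1$, i.e.\ (\ref{F_66}). For the second assertion, under the Isaacs condition (\ref{F_67}) the symmetric computation of the upper value $V_{\mathrm{up}}$ yields the same expression, so $V = V_{\mathrm{low}} = V_{\mathrm{up}}$ exists. Finally, to extract the feedback (\ref{F_68}) I would split according to whether $\tilde{V}^*(t,x) \leq \max\{0, h(t)\}$ or not: in the former case the value is locally governed by the state-independent terms $0$ or $h(t)$, every control is optimal, and $\bar{U}_i(t,x) = U_i$; in the latter case the aiming direction $\bar{l}(t,x)$ maximizing $\langle l, \{ \Phi(T,t) x \}_k + \tilde{z}(t) \rangle - s(l; \Pi(t))$ over $L_k$ is the unique element of $\bar{L}(t,x)$, and the saddle strategies are read off as the extremal aiming rule for this direction, namely those controls realizing the inner $\min_{u_1}\max_{u_2}$ and $\max_{u_2}\min_{u_1}$ of $\langle \bar{l}(t,x), \{ \Phi(T,t) \hat{f}(t, u_1, u_2) \}_k \rangle$, which is exactly (\ref{F_68}).
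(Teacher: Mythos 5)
You should begin by noting that the paper contains no proof of Theorem~\ref{Thm_38} at all: the statement is imported verbatim with the citation \cite[\S V.1]{SubbotinChentsov1981}, and the paper's only commentary is the Section~4 narrative identifying it as a problem class ``for which the closed-loop game value is reached exactly after one step of the programmed iteration procedure.'' There is therefore no in-paper argument to compare yours against; the only meaningful comparison is with the route the paper attributes to Subbotin and Chentsov, and that is exactly the route you chose.

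Within that route, everything you actually verify is correct. Substituting (\ref{F_64}) at $t = t_0$ into (\ref{F_62}), (\ref{F_63}) does give $\tilde{V}^*(t_0,x_0) = \max_{l \in L_k} \left\{ \left< l, \, \{\Phi(T,t_0)\,x_0\}_k + \tilde{z}(t_0) \right> - s(l;\,\Pi(t_0)) \right\} + h(t_0)$; the nonnegativity (\ref{F_65}) then yields the pointwise bound $V^*(t,x) \geqslant \max\{0, h(t)\}$; and together with your one-step iteration $V_1(t_0,x_0) = \sup_{u_2} \inf_{u_1} \max_{\tau \in [t_0,T]} V^*(\tau, x(\tau))$ this already delivers the easy half of (\ref{F_66}), namely $V_1 \geqslant \max\{V^*(t_0,x_0), \, \max_{t \in [t_0,T]} h(t)\}$ (take $\tau = t_0$ for the first term, and use the pointwise bound along an arbitrary trajectory for the second). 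The identity $\frac{d}{dt}\left[\Phi(T,t)\,x(t)\right] = \Phi(T,t)\,\hat{f}(t,u_1(t),u_2(t))$ follows from (\ref{F_43}) as you say. The genuine gap is that your proposal stops precisely where the content of \cite[\S V.1]{SubbotinChentsov1981} lies: the reverse inequality $V_1 \leqslant \max\{V^*, \max_t h\}$ --- the statement that player~1, by extremal aiming along the maximizing direction, can hold the maximizer down to this bound --- is only announced as ``the main obstacle,'' not carried out; the same is true of the fixed-point property of $V_1$ under a second iteration, of the singleton claim for $\bar{L}(t,x)$ in (\ref{F_68}), and of the coincidence $V_{\mathrm{low}} = V_{\mathrm{up}}$ under the Isaacs condition (\ref{F_67}), which is a substantive theorem rather than a ``symmetric computation.'' Nothing in your outline is misdirected and I see no step that would fail, but as a standalone proof the attempt is incomplete at its technical core --- in effect it reproduces the paper's own decision to delegate exactly those arguments to the reference.
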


One can use Theorem~\ref{Thm_38} in the next two examples.

\begin{example}{\rm \cite[\S V.2]{SubbotinChentsov1981}}  \label{Exa_39}  \rm
For the game
\begin{equation}
\left\{ \begin{aligned}
& \dot{x}_1(t) \:\: = \:\: x_3(t) \: + \: v_1(t), \\
& \dot{x}_2(t) \:\: = \:\: x_4(t) \: + \: v_2(t), \\
& \dot{x}_3(t) \:\: = \:\: u_1(t), \\
& \dot{x}_4(t) \:\: = \:\: u_2(t), \\
& x(t) \: = \: (x_1(t), \, x_2(t), \, x_3(t), \, x_4(t))^{\top} \: \in \: \mathbb{R}^4, \\
& u(t) \: = \: (u_1(t), \, u_2(t))^{\top} \: \in \: U_1 \: = \: \mathcal{B}_2(O_2, a_1), \\
& v(t) \: = \: (v_1(t), \, v_2(t))^{\top} \: \in \: U_2 \: = \: \mathcal{B}_2(O_2, a_2), \\
& a_i \, = \, \mathrm{const} \, > \, 0, \quad i = 1,2, \\
& t \in [0, T], \quad \mbox{$ T > 0 $ is fixed}, \\
& k = 2, \quad \mathcal{M} = \{ O_2 \}, \\
& \sigma(x(T)) \:\: = \:\: \| \{ x(T) \}_2 \|_2 \:\: = \:\: \sqrt{x_1^2(T) \, + \, x_2^2(T)} \:\: \longrightarrow \\
& \qquad\qquad\qquad\qquad\qquad\quad \:\:\,
\longrightarrow \:\: \inf_{u(\cdot)} \, \sup_{v(\cdot)} \:\: \mbox{or} \:\:
\sup_{v(\cdot)} \, \inf_{u(\cdot)} \, ,
\end{aligned} \right.  \label{F_58_0}
\end{equation}
Theorem~\ref{Thm_38} leads to the representation
\begin{equation}
\begin{aligned}
& V \left( t_0, \, x^0 \right) \:\: = \:\: \max \, \left\{ V^* \left( t_0, \, x^0 \right), \:\:
\max\limits_{t \, \in \, [t_0, T]}
\left\{ \left( a_2 \, - \, \frac{a_1}{2} \, (T - t) \right) \, (T - t) \right\} \right\} \\
& \forall \: \left( t_0, \, x^0 \right) \: \in \: [0, T] \times \mathbb{R}^4,
\end{aligned}  \label{F_58}
\end{equation}
where $ V^* $ is determined according to Proposition~\ref{Pro_36}. \qed
\end{example}

\begin{example}{\rm \cite[\S V.1]{SubbotinChentsov1981}}  \label{Exa_40}  \rm
For the game
\begin{equation}
\left\{ \begin{aligned}
& \dot{x}_1(t) \:\: = \:\: x_3(t) \: + \: v_1(t), \\
& \dot{x}_2(t) \:\: = \:\: x_4(t) \: + \: v_2(t), \\
& \dot{x}_3(t) \:\: = \:\: -\alpha \, x_3(t) \: + \: u_1(t), \\
& \dot{x}_4(t) \:\: = \:\: -\alpha \, x_4(t) \: + \: u_2(t), \\
& x(t) \: = \: (x_1(t), \, x_2(t), \, x_3(t), \, x_4(t))^{\top} \: \in \: \mathbb{R}^4, \\
& u(t) \: = \: (u_1(t), \, u_2(t))^{\top} \: \in \: U_1 \: = \: U^0 \, + \, \mathcal{B}_2(O_2, a), \\
& U^0 \:\: \stackrel{\mathrm{def}}{=} \:\: \left\{ \mu \, u^0 \: \colon \: \mu \in [-1, 1] \right\}, \\
& v(t) \: = \: (v_1(t), \, v_2(t))^{\top} \: \in \: U_2 \: = \: \mathcal{B}_2(O_2, b), \\
& \alpha > 0, \:\: a > 0, \:\: b > 0 \:\: \mbox{are scalar constants}, \\
& u^0 \: = \: \left( u^0_1, u^0_2 \right)^{\top} \: \in \: \mathbb{R}^2 \:\: \mbox{is a constant vector}, \\
& t \in [0, T], \quad  \mbox{$ T > 0 $ is fixed}, \\
& k = 2, \quad \mathcal{M} = \{ O_2 \}, \\
& \sigma(x(T)) \:\: = \:\: \| \{ x(T) \}_2 \|_2 \:\: = \:\: \sqrt{x_1^2(T) \, + \, x_2^2(T)} \:\: \longrightarrow \\
& \qquad\qquad\qquad\qquad\qquad\quad \:\:\,
\longrightarrow \:\: \inf_{u(\cdot)} \, \sup_{v(\cdot)} \:\: \mbox{or} \:\:
\sup_{v(\cdot)} \, \inf_{u(\cdot)} \, ,
\end{aligned} \right.  \label{F_69_0}
\end{equation}
Proposition~\ref{Pro_36} and Theorem~\ref{Thm_38} lead to the representations
\begin{equation}
\begin{aligned}
& V \left( t_0, x^0 \right) \:\: = \:\: \max \, \left\{ V^* \left( t_0, x^0 \right), \:\:
\max\limits_{t \, \in \, [t_0, T]} \, h(t) \right\}, \\
& V^* \left( t_0, x^0 \right) \:\: = \:\: \max \left\{ \tilde{V}^* \left( t_0, x^0 \right), \: 0 \right\}, \\
& \tilde{V}^* \left( t_0, x^0 \right) \:\: = \:\: \max\limits_{l \: = \: (l_1, l_2) \: \in \: L_2}
\left\{ \left( x^0_1 \: + \: r_{\alpha}(T, t_0) \, x^0_3 \right) \, l_1 \:\: + \right. \\
& \qquad\quad
\left. + \:\: \left( x^0_2 \: + \: r_{\alpha}(T, t_0) \, x^0_4 \right) \, l_2 \:\: - \:\: R_{\alpha}(T, t) \,
\left| l_1 \, u^0_1 \: + \: l_2 \, u^0_2 \right| \right\} \:\: + \:\: h(t_0) \\
& \forall \: \left( t_0, \, \left( x^0 \right)^{\top} \right) \: = \:
\left( t_0, \, x^0_1, \, x^0_2, \, x^0_3, \, x^0_4 \right) \: \in \: [0, T] \times \mathbb{R}^4,
\end{aligned}  \label{F_69}
\end{equation}
where
\begin{equation}
\begin{aligned}
& h(t) \:\: = \:\: (T - t) \, b \: - \: a \, R_{\alpha}(T, t), \\
& r_{\alpha}(T, t) \:\: \stackrel{\mathrm{def}}{=} \:\: \frac{1 \, - \, e^{-\alpha \, (T - t)}}{\alpha} \:\: \geqslant \:\: 0, \\
& R_{\alpha}(T, t) \:\: \stackrel{\mathrm{def}}{=} \:\: \int\limits_t^T r_{\alpha}(T, \xi) \, d \xi \:\: = \:\:
\frac{T - t}{\alpha} \: - \: \frac{1 \, - \, e^{-\alpha \, (T - t)}}{\alpha^2} \:\: \geqslant \:\: 0 \\
& \forall t \in [0, T].
\end{aligned}  \label{F_69_2}
\end{equation}
Furthermore, at any position $ \: (t, x) \: \in \: [0, T] \times \mathbb{R}^n, \: $ the related saddle feedback strategies
can be chosen according to
$$
\bar{u}(t, x) \: \in \: \bar{U}_1(t, x), \quad \bar{v}(t, x) \: \in \: \bar{U}_2(t, x),
$$
where the sets $ \: \bar{U}_i(t, x) $, $ i = 1,2, \: $ are determined by (\ref{F_68}) with the following specifications:
\begin{equation}
\begin{aligned}
& \Phi(T, t) \:\: = \:\: \begin{pmatrix}
1 & 0 & r_{\alpha}(T, t) & 0 \\
0 & 1 & 0 & r_{\alpha}(T, t) \\
0 & 0 & e^{-\alpha \, (T - t)} & 0 \\
0 & 0 & 0 & e^{-\alpha \, (T - t)}
\end{pmatrix}, \\
& \tilde{z}(t) \: \equiv \: (0, 0)^{\top}, \\
& \{ \Phi(T, t) \, x \}_2 \, + \, \tilde{z}(t) \:\: = \:\: \begin{pmatrix}
x_1 \: + \: r_{\alpha}(T, t) \, x_3 \\
x_2 \: + \: r_{\alpha}(T, t) \, x_4
\end{pmatrix}, \\
& \Phi(T, t) \, \hat{f}(t, u, v) \:\: = \:\: \Phi(T, t) \: \begin{pmatrix}
v_1 \\
v_2 \\
u_1 \\
u_2
\end{pmatrix} \:\: = \:\: \begin{pmatrix}
v_1 \: + \: r_{\alpha}(T, t) \, u_1 \\
v_2 \: + \: r_{\alpha}(T, t) \, u_2 \\
e^{-\alpha \, (T - t)} \, u_1 \\
e^{-\alpha \, (T - t)} \, u_2
\end{pmatrix}, \\
& \left< l, \: \{ \Phi(T, t) \, \hat{f}(t, u, v) \}_2 \right> \:\: = \:\: \left< l, v \right> \: + \:
r_{\alpha}(T, t) \, \left< l, u \right>, \\
& \max\limits_{\tilde{v} \, \in \, U_2} \left< l, \tilde{v} \right> \:\:\: \mbox{is reached at} \:\:\:
\tilde{v} \: = \: b \, l, \\
& \min\limits_{\tilde{u} \, \in \, U_1} \left< l, \tilde{u} \right> \:\:\: \mbox{is reached at all} \:\:\:
\tilde{u} \: \in \: \left( \{ -a \, l \} \: + \: \mathrm{Arg} \, \min_{w \, \in \, U^0} \, \left< l, w \right> \right), \\
& \Pi(t) \:\: = \:\: R_{\alpha}(T, t) \, U^0 \:\: = \:\:
\left\{ \mu \, R_{\alpha}(T, t) \, u^0 \: \colon \: \mu \in [-1, 1] \right\}, \\
& \left< l, \: \{ \Phi(T, t) \, x \}_2 \, + \, \tilde{z}(t) \right> \: - \: s (l; \, \Pi(t)) \:\: = \:\:
(x_1 \: + \: r_{\alpha}(T, t) \, x_3) \, l_1 \:\: + \\
& \qquad\qquad\qquad\quad
+ \:\: (x_2 \: + \: r_{\alpha}(T, t) \, x_4) \, l_2 \:\: - \:\:
R_{\alpha}(T, t) \, \max\limits_{w \, \in \, U^0} \, \left< l, w \right> \\
& \forall \: \left( t, \, x^{\top}, \, u^{\top}, \, v^{\top}, \, l^{\top} \right) \: = \:
(t, \, x_1, \, x_2, \, x_3, \, x_4, \, u_1, \, u_2, \, v_1, \, v_2, \, l_1, \, l_2) \:\: \in \\
& \qquad\qquad\qquad\qquad\qquad\qquad\qquad\quad \:\:
\in \: [0, T] \times \mathbb{R}^4 \times U_1 \times U_2 \times L_2.
\end{aligned}  \label{F_70}
\end{equation}
Thus{\rm ,}
\begin{equation}
\begin{aligned}
& \bar{U}_1(t, x) \:\: = \:\: \left\{ -a \, \bar{l}(t, x) \right\} \: + \: \mathrm{Arg} \, \min_{w \, \in \, U^0} \,
\left< \bar{l}(t, x), \, w \right>, \\
& \mathrm{Arg} \, \min_{w \, \in \, U^0} \, \left< \bar{l}(t, x), \, w \right> \:\: = \:\: \begin{cases}
u^0, & \left< \bar{l}(t, x), \, u^0 \right> \: < \: 0, \\
-u^0, & \left< \bar{l}(t, x), \, u^0 \right> \: > \: 0, \\
U^0, & \left< \bar{l}(t, x), \, u^0 \right> \: = \: 0, \\
\end{cases} \\
& \bar{U}_2(t, x) \: = \: \{ b \, \bar{l}(t, x) \}, \\
& \bar{L}(t, x) \:\: = \:\: \mathrm{Arg} \, \max\limits_{l \: = \: (l_1, l_2) \: \in \: L_2} \,
\left\{ (x_1 \: + \: r_{\alpha}(T, t) \, x_3) \, l_1 \:\: + {}^{{}^{{}^{}}} \right. \\
& \qquad
\left. + \:\: (x_2 \: + \: r_{\alpha}(T, t) \, x_4) \, l_2 \:\: - \:\:
R_{\alpha}(T, t) \, \left| l_1 \, u^0_1 \: + \: l_2 \, u^0_2 \right| \right\} \:\: = \:\:
\left\{ \bar{l}(t, x) \right\} \\
& \mathrm{if} \:\: \tilde{V}^*(t, x) \: > \: \max \, \{ 0, \, h(t) \},
\end{aligned}  \label{F_71}
\end{equation}
and
\begin{equation}
\bar{U}_i(t, x) \, = \, U_i, \:\: i = 1,2, \quad \mathrm{if} \:\:\: \tilde{V}^*(t, x) \: \leqslant \: \max \, \{ 0, \, h(t) \}.
\label{F_72}
\end{equation}
\qed
\end{example}

In Appendix, we describe one more class of differential games for which a single programmed iteration leads to the closed-loop
game value.

\section{Numerical simulations}

In this section, we discuss our computational results. The numerical simulations have been conducted (without algorithm
parallelization) on a relatively weak machine with 1.4 GHz Intel 2957U CPU, and the corresponding runtimes are mentioned here.

\begin{example}  \label{Exa_43}  \rm
Consider the problem~(\ref{F_73}) from Example~\ref{Exa_41} with
\begin{equation}
\arraycolsep=1.5pt
\def\arraystretch{2}
\begin{array}{c}
c(x) \:\: = \:\: 1 \: + \: 3 \:
\exp \left( -4 \: \| x \: - \: (1, \, 1, \, 0, \, 0, \ldots, \, 0) \|_{\mathbb{R}^n}^2 \right) \:\: > \:\: 0, \\
\sigma(x) \:\: = \:\: \dfrac{1}{2} \: (\left< Ax, x \right> \: - \: 1), \\
A \:\: = \:\: \mathrm{diag} \: [0.25, \, 1, \, 0.5, \, 0.5, \, \ldots, \, 0.5] \:\: \in \:\: \mathbb{R}^{n \times n}.
\end{array}  \label{F_85}
\end{equation}
This particular problem appears from the problem of \cite[Section~5, Example~3]{ChowDarbonOsherYin2017} just by
changing the first diagonal element of the matrix~$ A $ from $ 2.5 $ to $ 0.25 $. Since $ \mathrm{D} \sigma $ vanishes only
at the point $ x = 0 $ which gives the global minimum to $ \sigma $, then Theorem~\ref{Thm_20} can be directly applied here
(together with Remark~\ref{Rem_21}), and the finite-dimensional optimization can be performed over the unit sphere~(\ref{F_84}),
so that the choice of extremal control values is unique (recall the reasonings of Example~\ref{Exa_41}).

Fig.~\ref{Fig_1} indicates the two value function approximations~$ V_{\mathrm{MoC}} $ and $ V_{\mathrm{FD}} $ constructed
respectively by the method of characteristics (Theorem~\ref{Thm_20}, Example~\ref{Exa_41}) and via the monotone Lax-Friedrichs
finite-difference scheme \cite{CrandallLions1984,OsherShu1991,ROCHJ2017} (which ensures a theoretical convergence property and
an error estimate) for $ n = 2 $ (two-dimensional state space) and $ \, T - t_0 = 0.5 $. Some related level sets are depicted in
Fig.~\ref{Fig_2}. They qualitatively resemble the corresponding results reported in \cite[Section~5, Example~3]{ChowDarbonOsherYin2017}.
Fig.~\ref{Fig_3} shows the optimal feedback control strategy obtained together with $ V_{\mathrm{MoC}} $ directly from
the integrated optimal characteristics.

\begin{figure}
\begin{center}
\includegraphics[ width = 8cm, height = 5.4cm ]{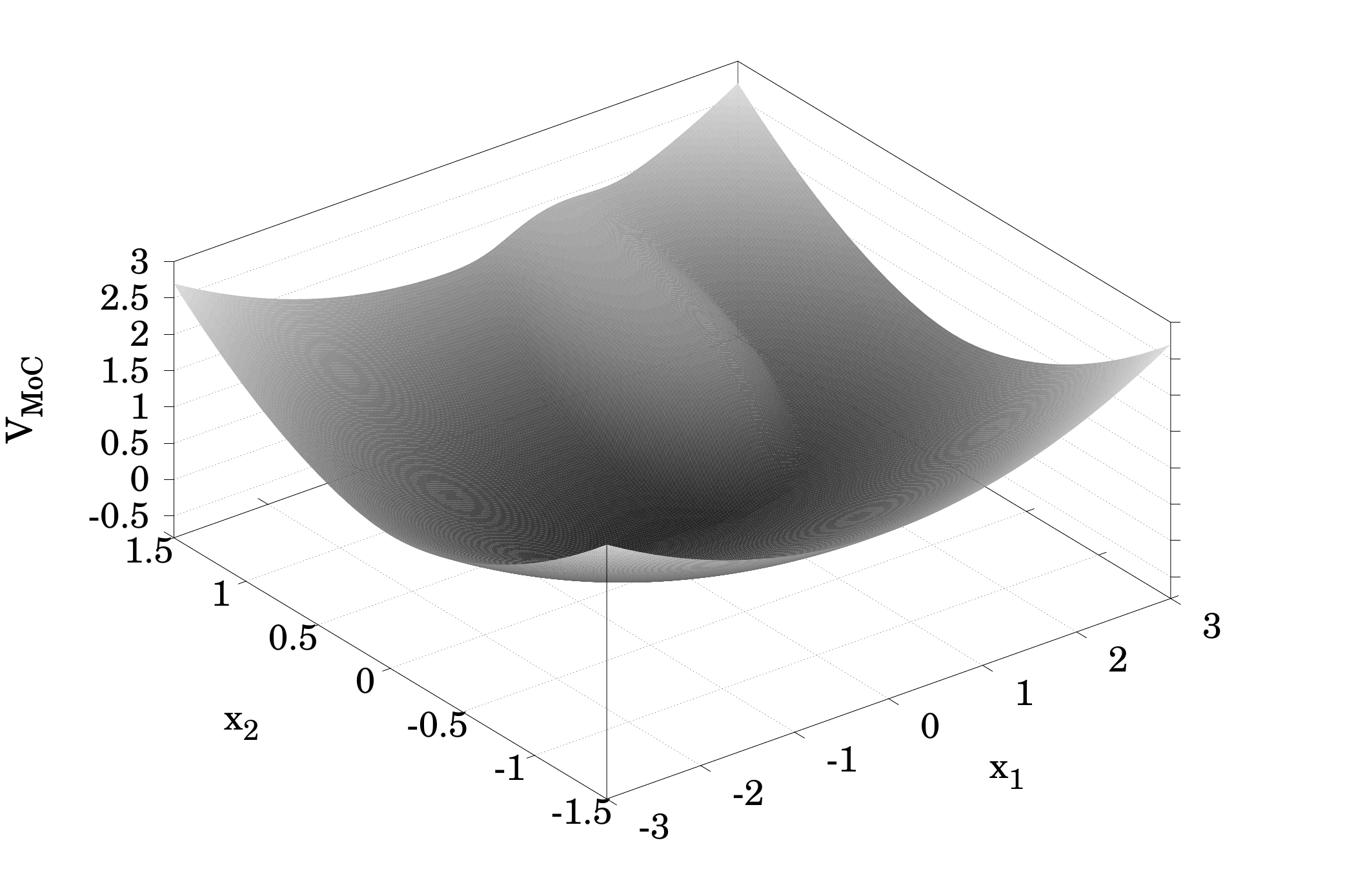}
\includegraphics[ width = 8cm, height = 5.4cm ]{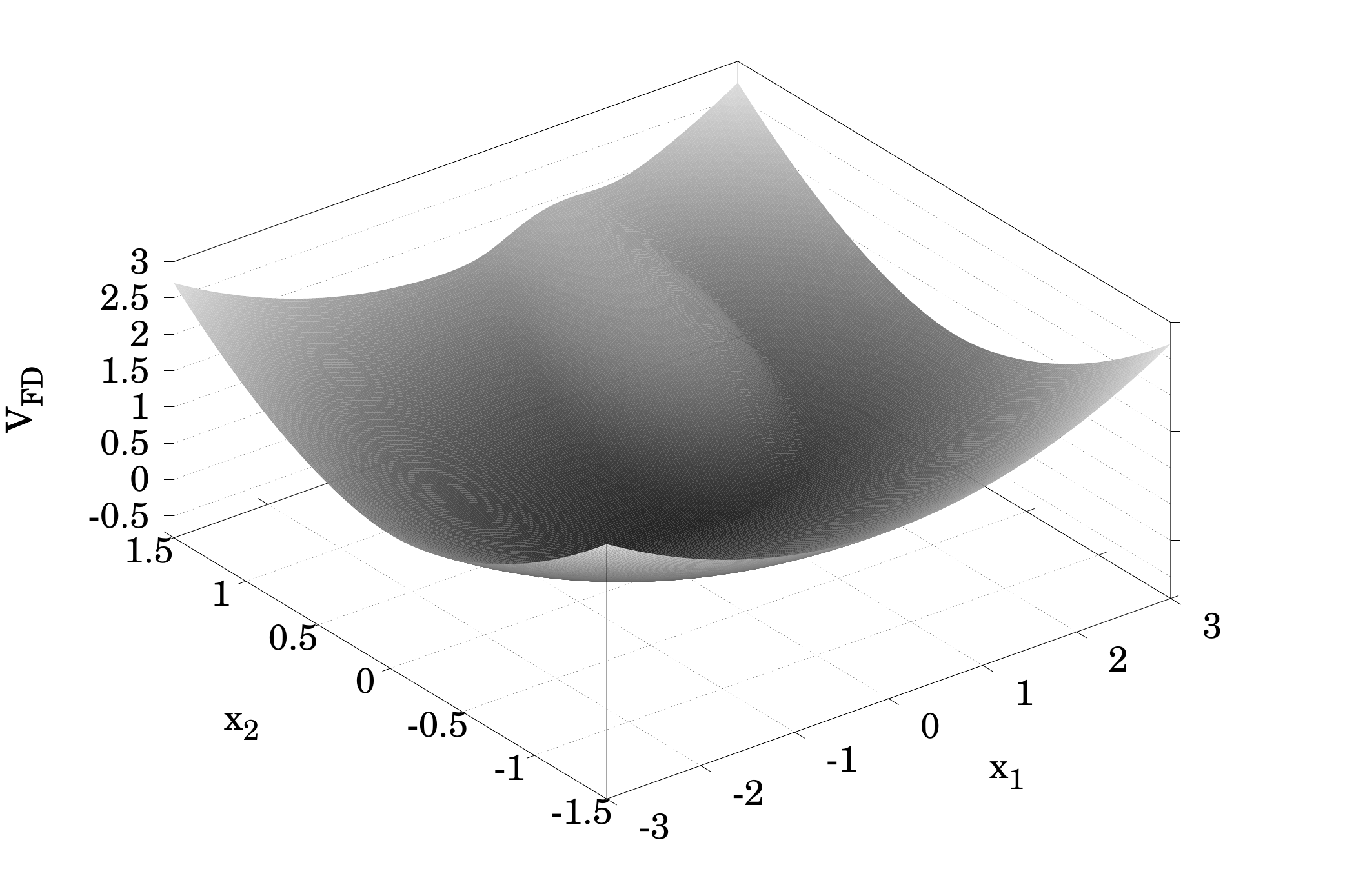} \\
\includegraphics[ width = 8cm, height = 5.4cm ]{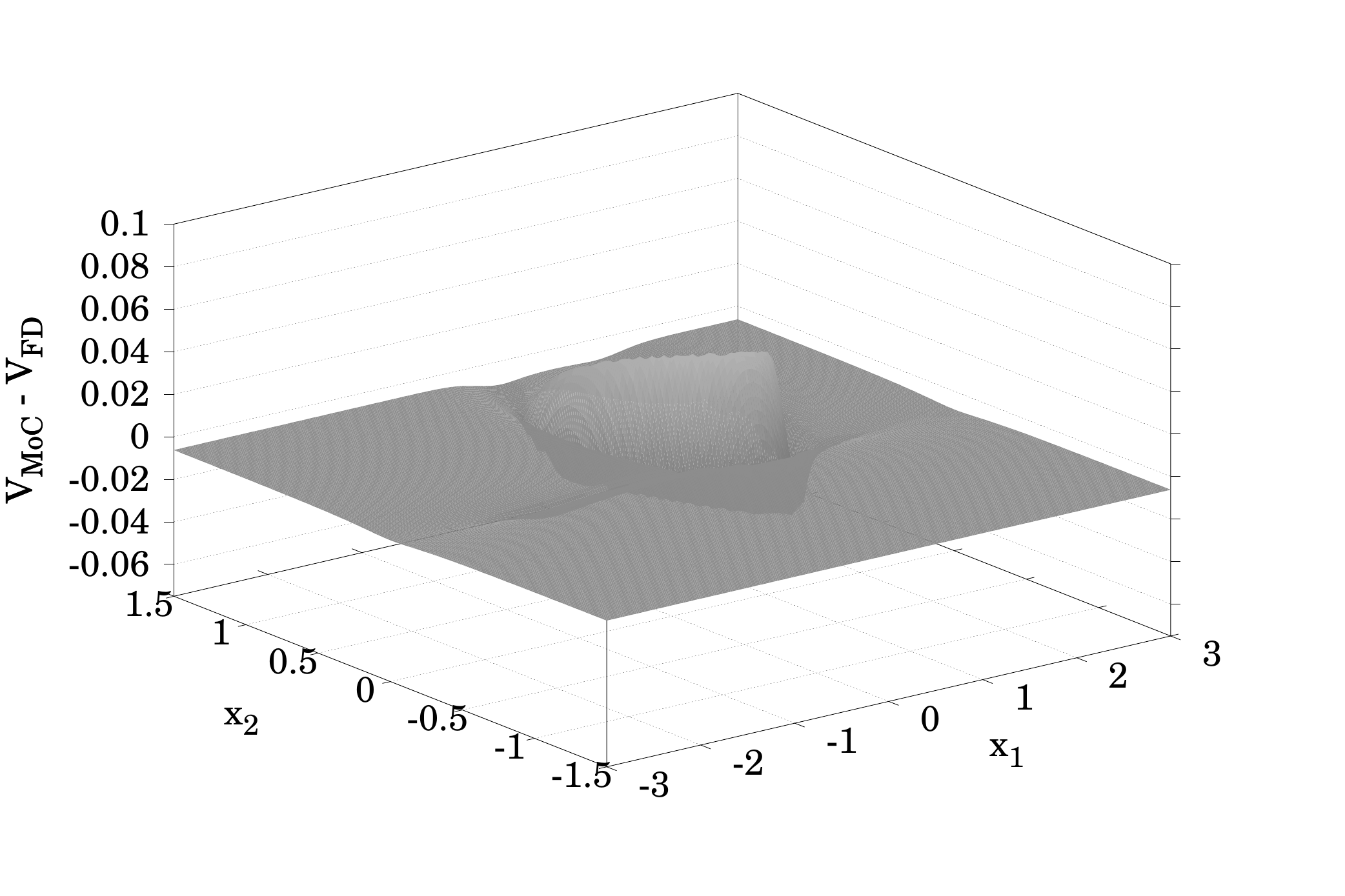}
\end{center}
\bf \caption{\it The value function approximations~$ V_{\mathrm{MoC}}, V_{\mathrm{FD}} $ and their
difference~$ \, V_{\mathrm{MoC}} - V_{\mathrm{FD}} \, $ in Example~{\rm \ref{Exa_43}} for $ n = 2 $ and $ \, T - t_0 = 0.5 $.
In order to see the graph of $ \, V_{\mathrm{MoC}} - V_{\mathrm{FD}} \, $ clearer{\rm ,} an essentially larger scale on
the vertical axis is used there.}
\label{Fig_1}
\end{figure}

\begin{figure}
\begin{center}
\includegraphics[ width = 8cm, height = 5.4cm ]{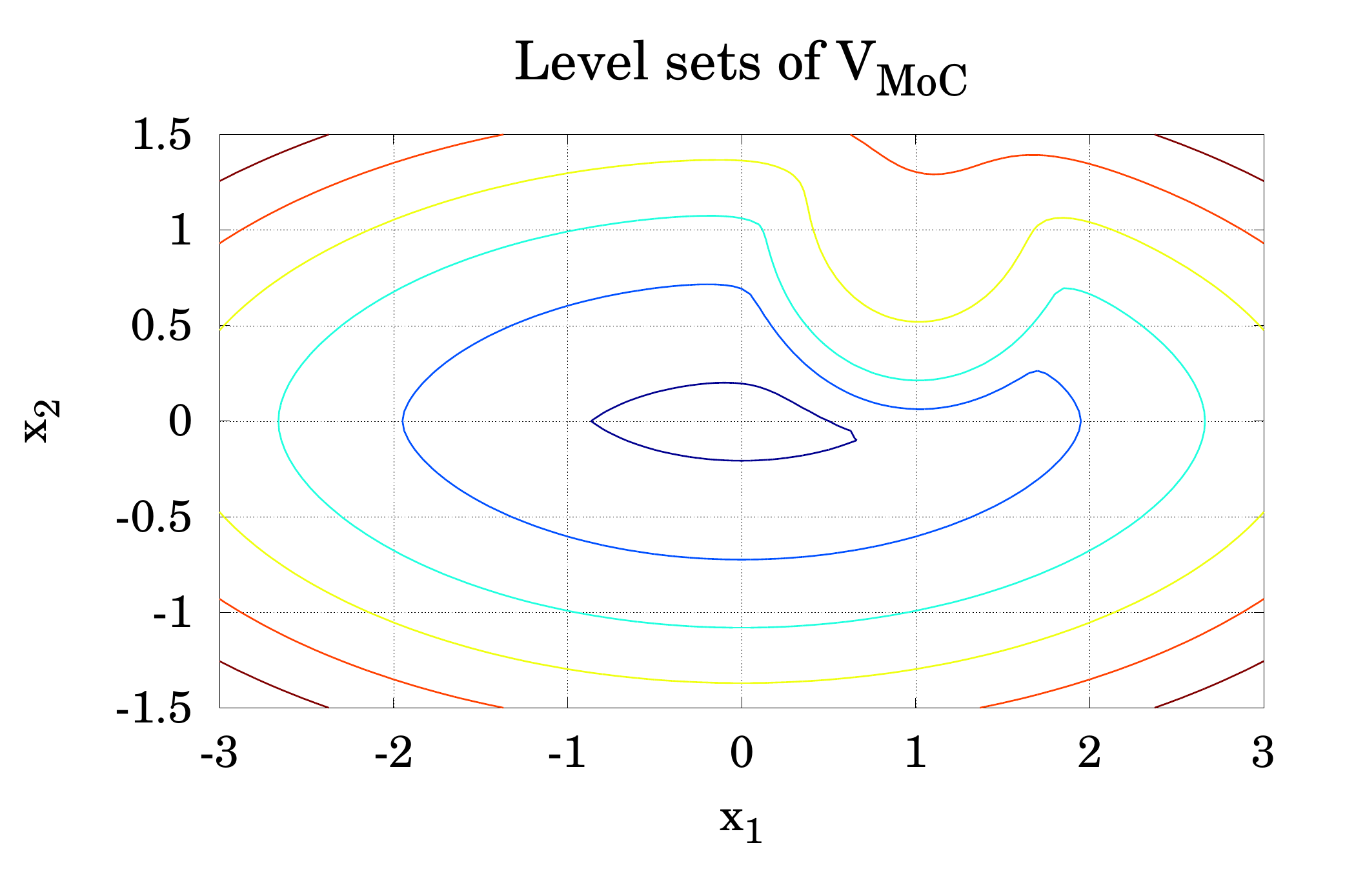}
\includegraphics[ width = 8cm, height = 5.4cm ]{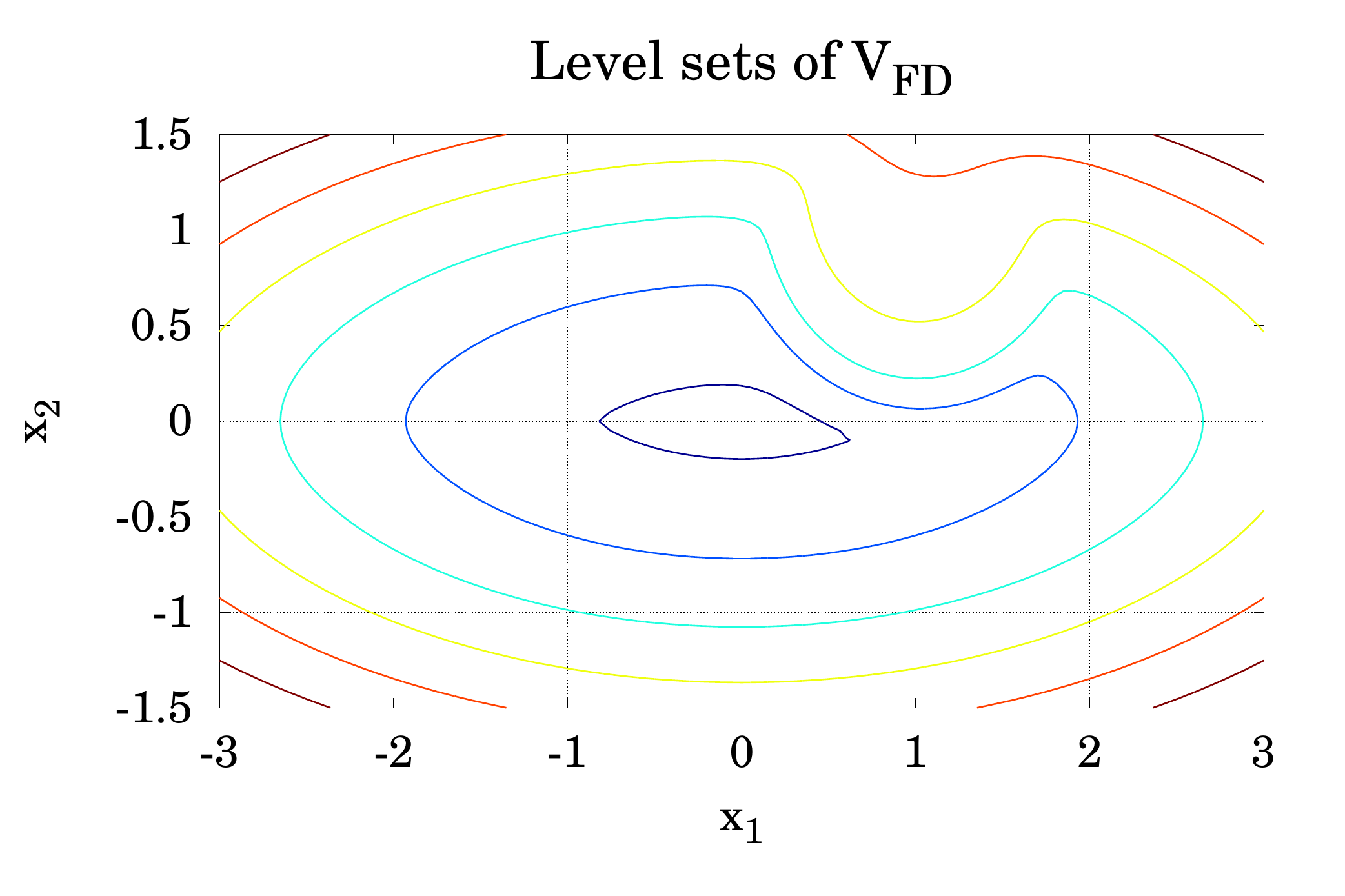}
\end{center}
\bf \caption{\it Level sets of the value function approximations~$ V_{\mathrm{MoC}} $ and $ V_{\mathrm{FD}} $ in
Example~{\rm \ref{Exa_43}} for $ n = 2 $ and $ \, T - t_0 = 0.5 $.}
\label{Fig_2}
\end{figure}

\begin{figure}
\begin{center}
\includegraphics[ width = 8cm, height = 5.4cm ]{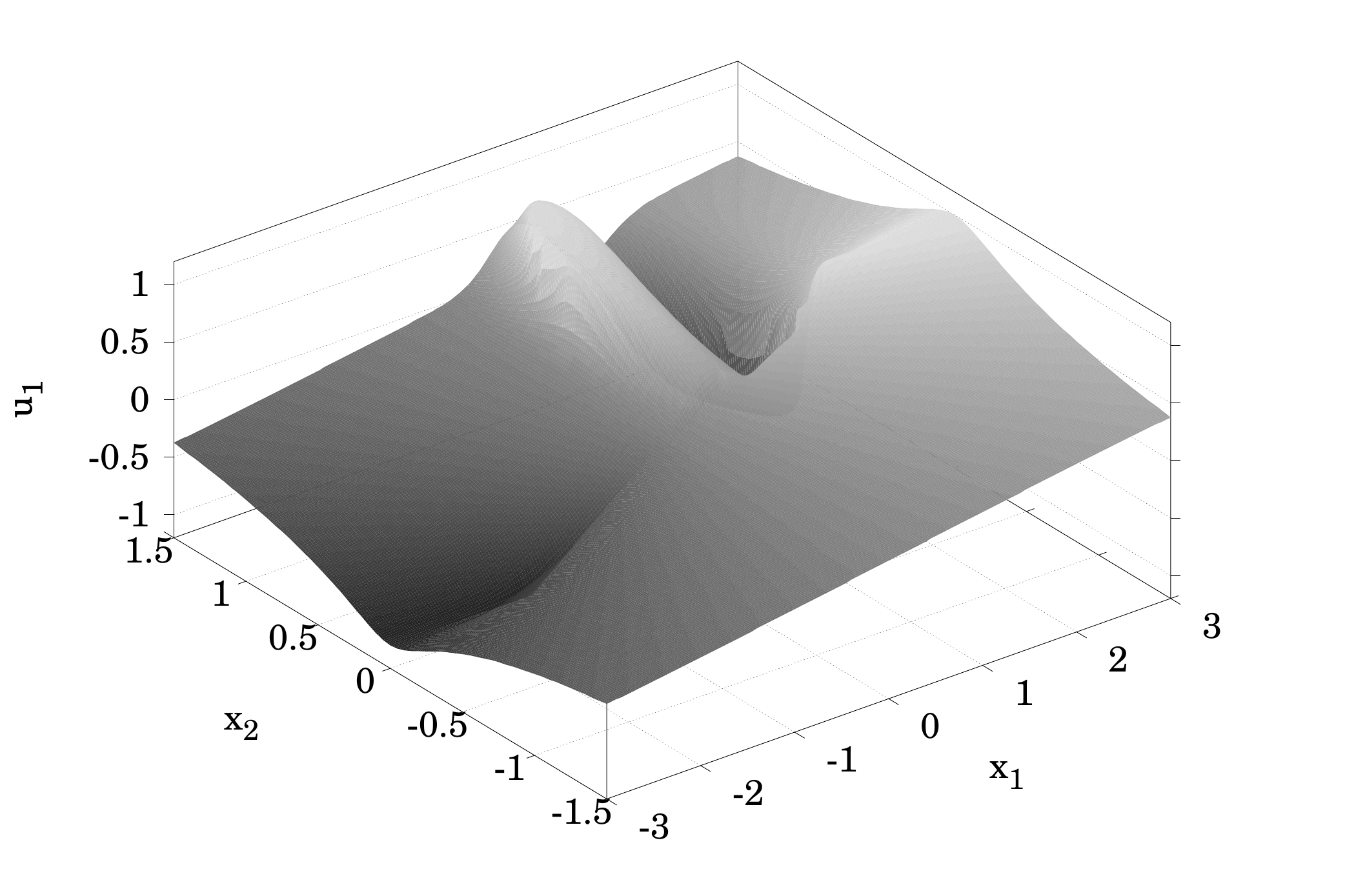}
\includegraphics[ width = 8cm, height = 5.4cm ]{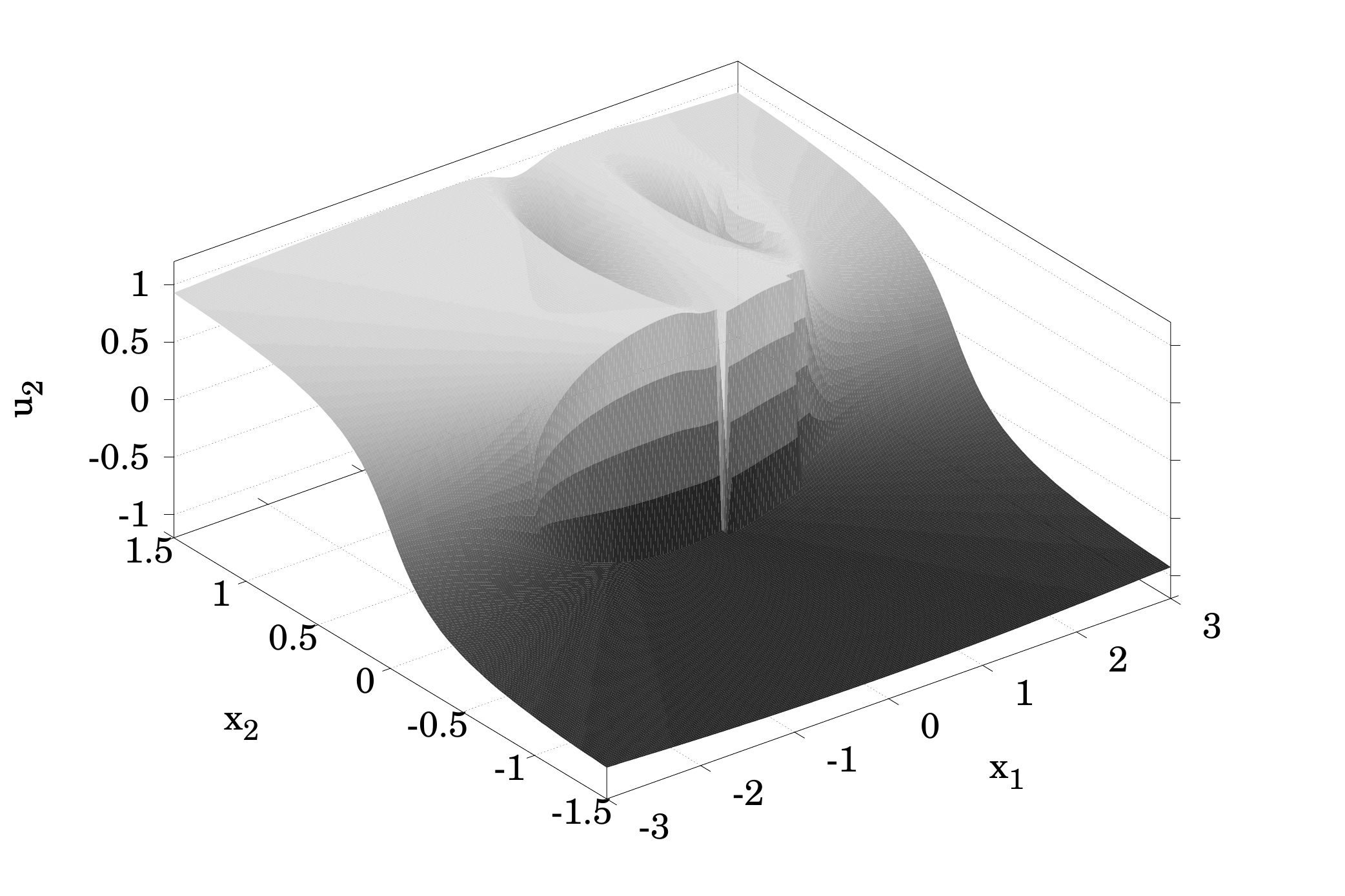}
\end{center}
\bf \caption{\it The optimal feedback control strategy related to $ V_{\mathrm{MoC}} $ in Example~{\rm \ref{Exa_43}} for
$ n = 2 $ and $ \, T - t_0 = 0.5 $.}
\label{Fig_3}
\end{figure}

The Cauchy problems~(\ref{F_79}),~(\ref{F_80}) have been solved numerically via the fifth-order Runge--Kutta algorithm from
the C++ library of \cite{PressNR2007}. We have also verified the obtained results by using the implicit Rosenbrock method
\cite[\S 17.5.1]{PressNR2007} (which is in general more stable but much more computationally expensive), and a good
agreement has been observed. When launching the Runge--Kutta routine, the initial guess for the stepsize was set as $ 10^{-3} $,
and the absolute and relative tolerances were specified as $ 10^{-5} $. The initial states were taken from the uniform grid on
the rectangle $ \, [-3, 3] \times [-1.5, 1.5] \, $ with the spatial step $ 0.05 $. The two-dimensional unit sphere of initial
adjoint vectors was parametrized by one angle with values in the interval~$ [0, 2 \pi) $. The uniform grid for the latter
consisted of $ 1000 $ points. The maximum of the cost functional was chosen directly around this grid (taking the denser grid of
$ 2000 $ points has led to very close results, as illustrated in Fig.~\ref{Fig_4}). The approximate runtime of computing the value
function by the method of characteristics (as shown in the first subfigure of Fig.~\ref{Fig_1}) has been 400.798~s totally and
0.054301314~s per point. The runtime can be decreased if the optimization over the two-dimensional unit sphere is performed
by means of an advanced one-dimensional maximization/minimization algorithm (see, for instance, \cite[Chapter~10]{PressNR2007})
after a random choice of a certain amount of starting points. Note that unconstrained maximization/minimization routines can be
reasonably used for optimization over sphere parametrizations due to the periodicity of the latter.

For computing the finite-difference approximation~$ V_{\mathrm{FD}} $, we have used the C++ package ROC--HJ
\cite{ROCHJ2017}. We chose the greater computational region $ \, [-5, 5] \times [-3, 3] \, $ in order to reduce
boundary cutoff errors in the relevant subdomain $ \, [-3, 3] \times [-1.5, 1.5] $. The spatial step was
taken uniformly as $ 5 \times 10^{-3} $, and the time step was set as $ 5 \times 10^{-4} $, so that
the Courant--Friedrichs--Lewy condition (sufficient for convergence) held. The total runtime has been around 431.475~s,
which is greater than the runtime of computing $ V_{\mathrm{MoC}} $. Therefore, the characteristics approach turns out
to be more efficient than the Lax-Friedrichs scheme for the considered problem even in the small-dimensional case.

Fig.~\ref{Fig_1} and \ref{Fig_2} show that $ V_{\mathrm{MoC}} $ and $ V_{\mathrm{FD}} $ in principle agree between each other,
while the greatest difference occurs nearly at points of nonsmoothness. In Fig.~\ref{Fig_1}, the absolute
difference~$ \, V_{\mathrm{MoC}} - V_{\mathrm{FD}} \, $ is indicated instead of a relative one, because the latter can be
far away from zero at some points where both $ V_{\mathrm{MoC}} $ and $ V_{\mathrm{FD}} $ are close to zero.

\begin{figure}
\begin{center}
\includegraphics[ width = 9cm, height = 6cm ]{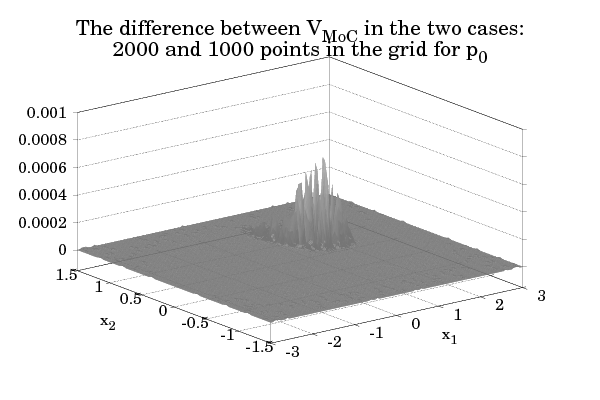}
\end{center}
\bf \caption{\it If{\rm ,} in Example~{\rm \ref{Exa_43}} for $ n = 2 $ and $ \, T - t_0 = 0.5, \, $ the number of points in
the grid on the two-dimensional unit sphere of initial adjoint vectors is increased from $ 1000 $ to $ 2000 ${\rm ,} then
the value function approximation~$ V_{\mathrm{MoC}} $ changes only negligibly. In order to see the graph of this small difference
clearer{\rm ,} a sufficiently large scale on the vertical axis is used.}
\label{Fig_4}
\end{figure}

For comparison, we have also approximated the value function by solving the boundary value problems of Pontryagin's
principle, although this is not a completely correct approach in general (recall Remark~\ref{Rem_12_0}). The characteristic
equations from (\ref{F_79}) were considered under the initial and terminal conditions $ \: x^*(t_0) = x_0 $,
$ \: p^*(T) \, \uparrow\uparrow \, \mathrm{D} \sigma(x^*(T)) $. In the related numerical algorithm, we chose $ p_0 $ from
the uniform grid consisting of $ 1000 $ points on the unit sphere~(\ref{F_84}) with the aim to maximize the dot product
$$
\left< \frac{1}{\| p^*(T) \|} \: p^*(T), \:\, \mathrm{D} \sigma \, (x^*(T)) \right>,
$$
i.\,e., to make the directions of the vectors~$ p^*(T) $ and $ \mathrm{D} \sigma \, (x^*(T)) $ as close to each other as possible.
The characteristic system was integrated via the same Runge--Kutta routine (with the same initial stepsize guess, absolute and
relative tolerances) as mentioned above. The time~$ \, T - t_0 = 0.5 \, $ and uniform grid for the initial states were also
taken the same as before. The corresponding value function approximation~$ V_{\mathrm{MoC, BVP}} $ is illustrated in
Fig.~\ref{Fig_5} together with the difference~$ \, V_{\mathrm{MoC}} - V_{\mathrm{MoC, BVP}} $. The latter is always nonnegative and
can be interpreted as an error function for $ V_{\mathrm{MoC, BVP}} $. The approximate runtime of computing $ V_{\mathrm{MoC, BVP}} $
together with the related feedback control strategy has been 399.190~s totally and 0.054083458~s per point, which is almost the same as
for $ V_{\mathrm{MoC}} $. However, the errors of $ V_{\mathrm{MoC, BVP}} $ with respect to $ V_{\mathrm{MoC}} $ are not negligible at
some of the points for which the boundary value problem of Pontryagin's principle admits multiple solutions and where the value
function is nonsmooth. By comparing the last subfigures of Fig.~\ref{Fig_1} and \ref{Fig_5}, we conclude that these errors can be
noticeably greater than the corresponding values of the absolute difference~$ \, \left| V_{\mathrm{MoC}} - V_{\mathrm{FD}} \right| $.

\begin{figure}
\begin{center}
\includegraphics[ width = 8cm, height = 5.4cm ]{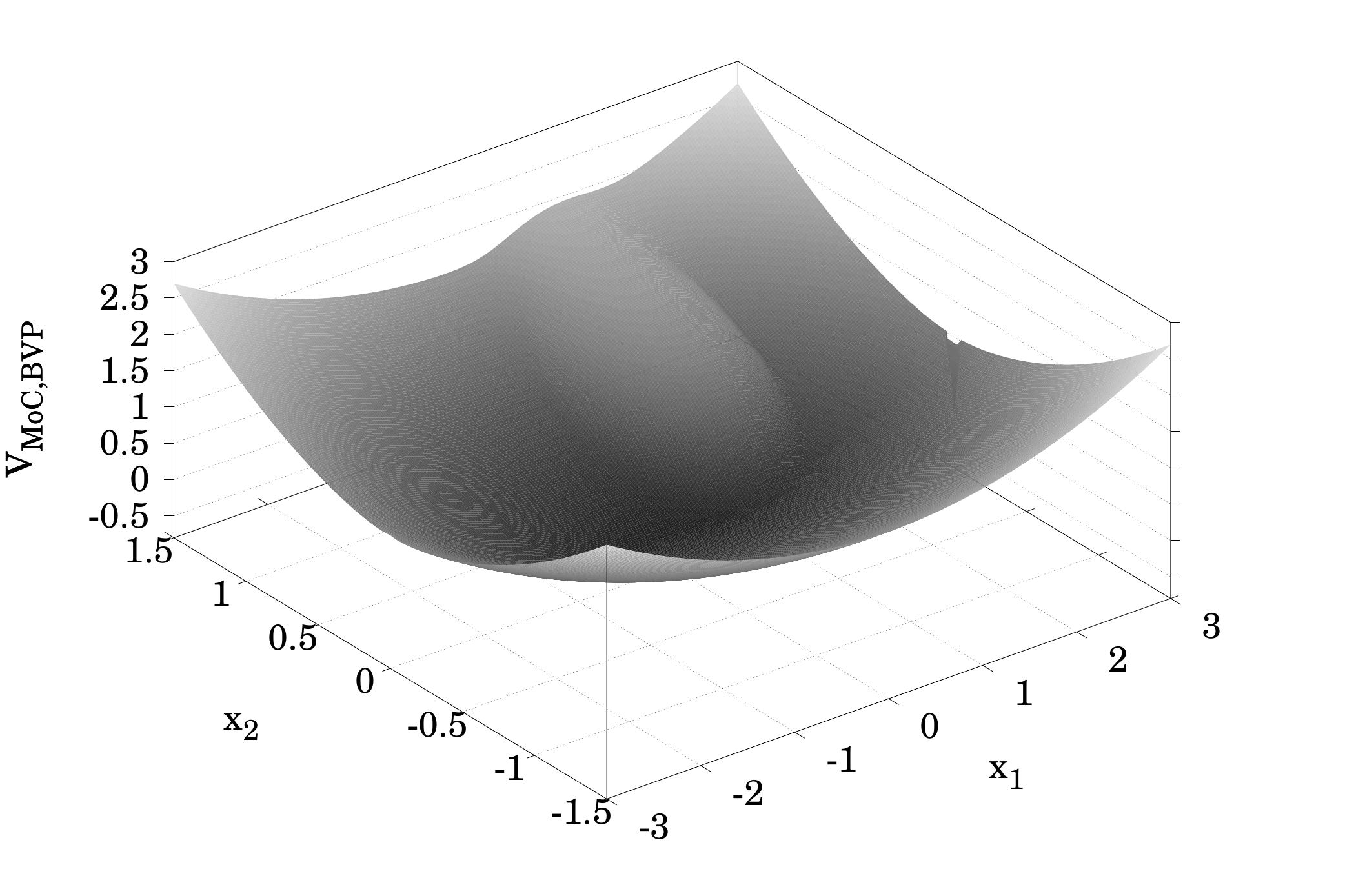}
\includegraphics[ width = 8cm, height = 5.4cm ]{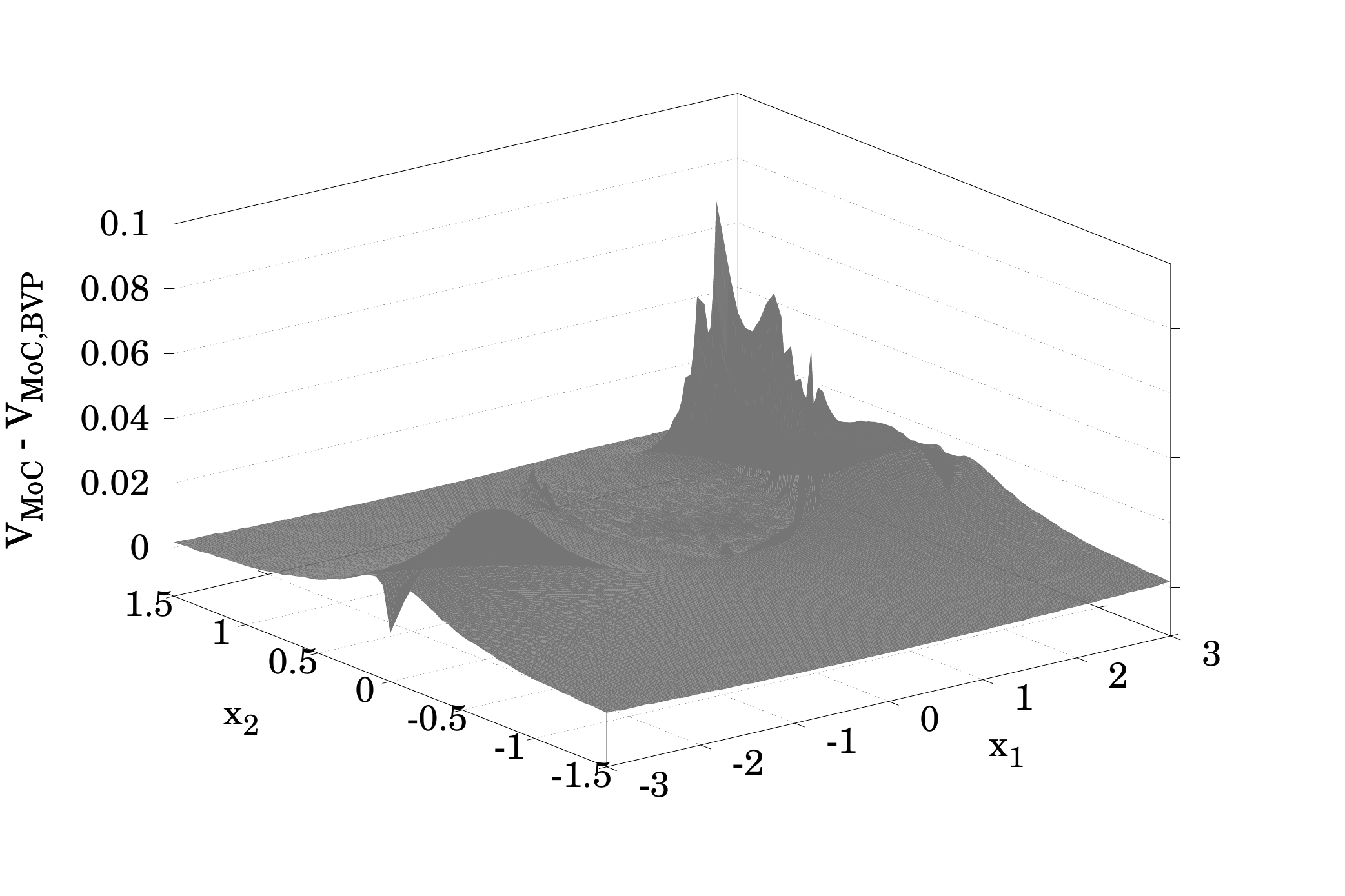}
\end{center}
\bf \caption{\it The value function approximation~$ V_{\mathrm{MoC, BVP}} $ and its
error~$ \, V_{\mathrm{MoC}} - V_{\mathrm{MoC, BVP}} \, $ in Example~{\rm \ref{Exa_43}} for $ n = 2 $ and $ \, T - t_0 = 0.5 $.
In order to see the graph of $ \, V_{\mathrm{MoC}} - V_{\mathrm{MoC, BVP}} \, $ clearer{\rm ,} an essentially larger scale on
the vertical axis is used there.}
\label{Fig_5}
\end{figure}

Finally, consider the high-dimensional case $ n = 5 $ ($ 5 $-dimensional state space) for which the curse of dimensionality
makes grid-based methods almost inapplicable. Take also $ \, T - t_0 = 0.5 $. By using the method of characteristics
(Theorem~\ref{Thm_20}, Example~\ref{Exa_41}), we have constructed the reduction of the corresponding value function
approximation~$ V_{\mathrm{MoC}} $ to the plane $ \: x_3 = x_4 = x_5 = 0 $. It is illustrated in Fig.~\ref{Fig_6} together
with some related level sets. The vector of the first two initial state coordinates~$ x_1, x_2 $ was chosen from the same uniform
grid on the rectangle $ \, [-3, 3] \times [-1.5, 1.5] \, $ as mentioned above. The $ 10 $-dimensional characteristic system was
integrated by means of the same Runge--Kutta routine (with the same initial stepsize guess, absolute and relative tolerances) as
used before. The optimization over the $ 5 $-dimensional unit sphere of initial adjoint vectors was performed via Powell's
algorithm from the C++ library of \cite{PressNR2007} (this is a zero-order method that does not require computation of
derivatives), and the tolerance parameter was specified as $ 10^{-7} $. The sphere was parametrized in the standard way:
\begin{equation}
\left\{ \begin{aligned}
& x_1 \:\: = \:\: r \: \prod\limits_{i = 1}^{n - 1} \sin \theta_i, \\
& x_j \:\: = \:\: r \: \cos \theta_{j - 1} \: \prod\limits_{i = j}^{n - 1} \sin \theta_i, \quad j = \overline{2, n - 1}, \\
& x_n \:\: = \:\: r \: \cos \theta_{n - 1}, \\
& r = 1, \quad 0 \leqslant \theta_1 < 2 \pi, \quad 0 \leqslant \theta_j \leqslant \pi, \quad j = \overline{2, n - 1}.
\end{aligned} \right.  \label{F_86}
\end{equation}
Due to the periodicity in the angles~$ \, \theta_i $, $ i = \overline{1, n - 1}, \, $ it was reasonable to use Powell's method of
unconstrained optimization with a random choice of some number of starting points. For each optimization process, we
randomly generated $ 5 $ starting points according to the uniform distribution with respect to the angles (taking $ 15 $
random starting points has led to an identical value function approximation). The runtime has been 1685.499~s totally and
0.228356456~s per point, which seems to be suitable, taking into account the high-dimensional case and weakness of
the computational resources we have used. The runtime can be substantially smaller for more powerful machines, especially
when parallelization is done. \qed

\begin{figure}
\begin{center}
\includegraphics[ width = 8cm, height = 5.4cm ]{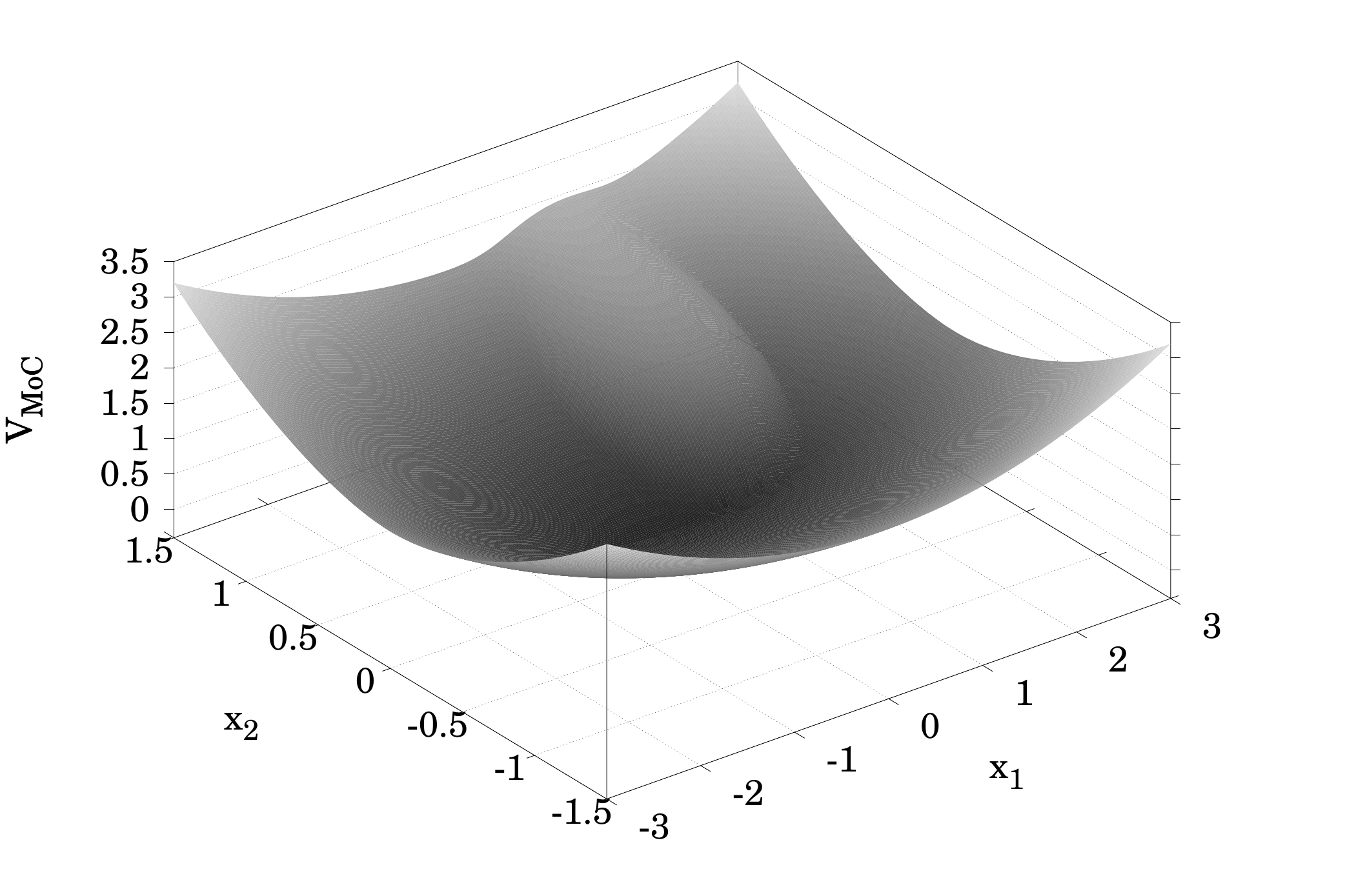}
\includegraphics[ width = 8cm, height = 5.4cm ]{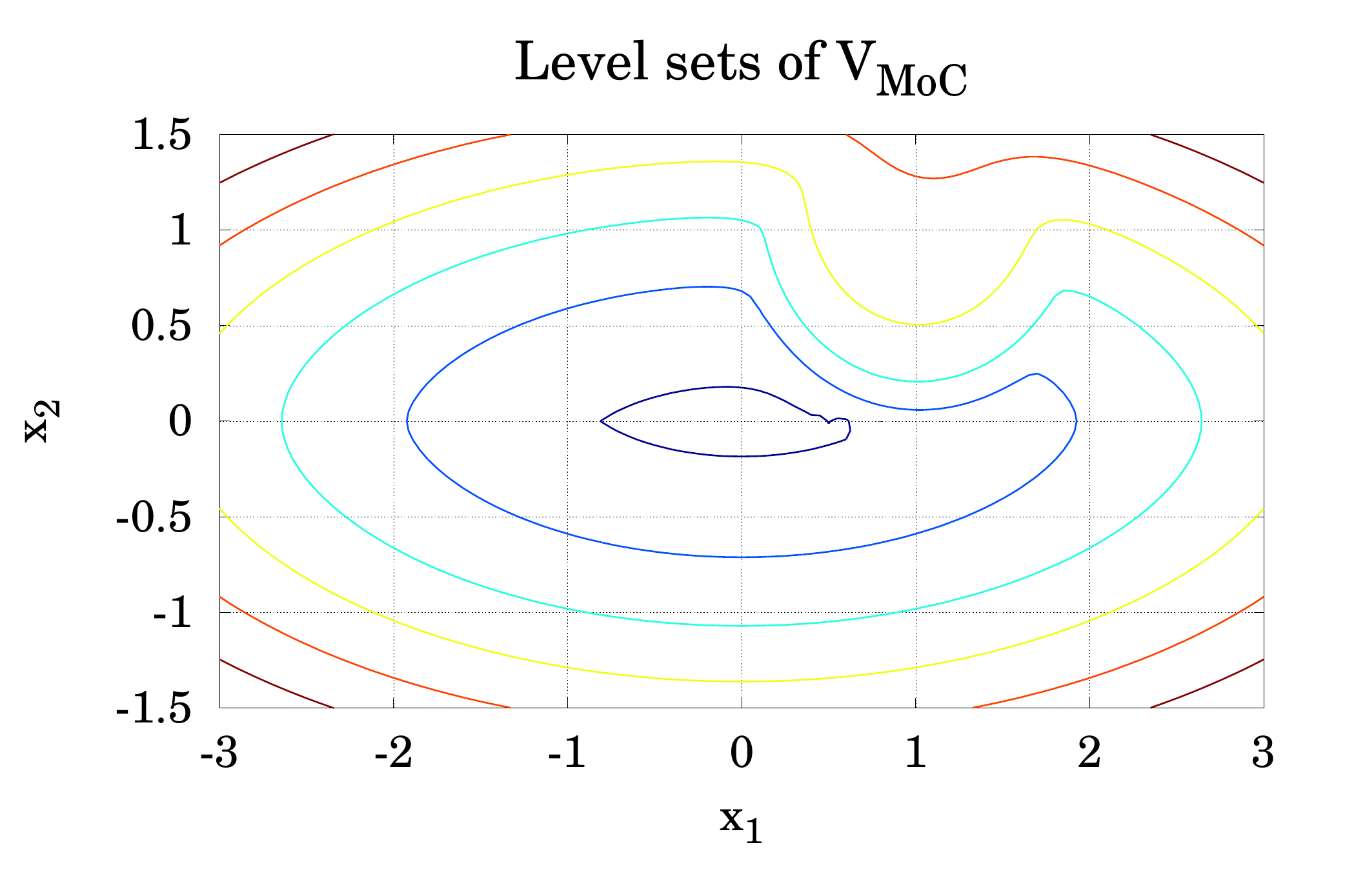}
\end{center}
\bf \caption{\it The value function approximation~$ V_{\mathrm{MoC}} $ on the plane $ \: x_3 = x_4 = x_5 = 0 \: $ in
Example~{\rm \ref{Exa_43}} for $ n = 5 $ and $ \, T - t_0 = 0.5 $.}
\label{Fig_6}
\end{figure}
\end{example}

\begin{example}  \label{Exa_44}  \rm
Consider the game~(\ref{F_69_0}) from Example~\ref{Exa_40} with
\begin{equation}
\arraycolsep=1.5pt
\def\arraystretch{2}
\begin{array}{c}
\alpha \, = \, 1, \quad a \, = \, 0.2, \quad b \, = \, 0.1, \quad u^0 \: = \: (0, 0)^{\top}, \\
t_0 \, = \, 0, \quad T \, = \, 2.
\end{array}  \label{F_87}
\end{equation}
The formulae~(\ref{F_69}) determine the programmed maximin function~$ V^* $ and closed-loop game value function~$ V $,
while the saddle feedback control maps are represented by (\ref{F_71}),~(\ref{F_72}). In (\ref{F_72}), let us select
the unique values as $ u^0 $ from $ U_1 $ and $ (0, 0)^{\top} $ from $ U_2 $.

Fig.~\ref{Fig_7} indicates the reductions of the functions~$ V, V^* $ and their difference~$ V - V^* $ to
the plane $ \: x_3 = x_4 = 0 $, $ t_0 = 0 \: $ (it is in fact enough to fix $ \, T - t_0 = 2 \, $ instead of specifying
the particular initial and final instants $ t_0 = 0 $, $ T = 2 $). Some related level sets are depicted in Fig.~\ref{Fig_8}.
The corresponding reductions of the saddle feedback control strategies are illustrated in Fig.~\ref{Fig_9}.

\begin{figure}
\begin{center}
\includegraphics[ width = 11cm, height = 7cm ]{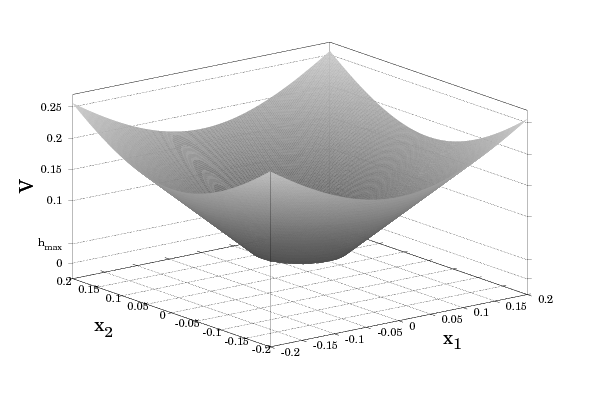}
\includegraphics[ width = 11cm, height = 7cm ]{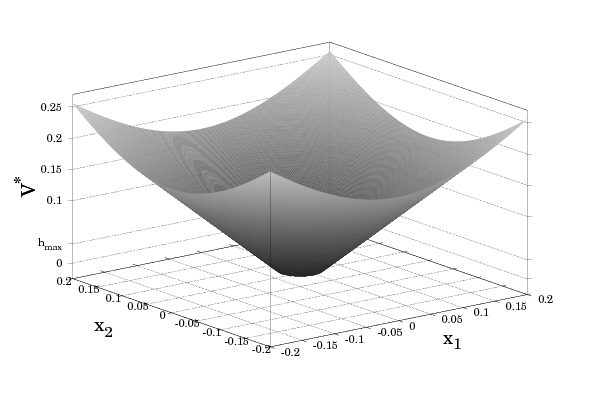} \\
\includegraphics[ width = 9cm, height = 6cm ]{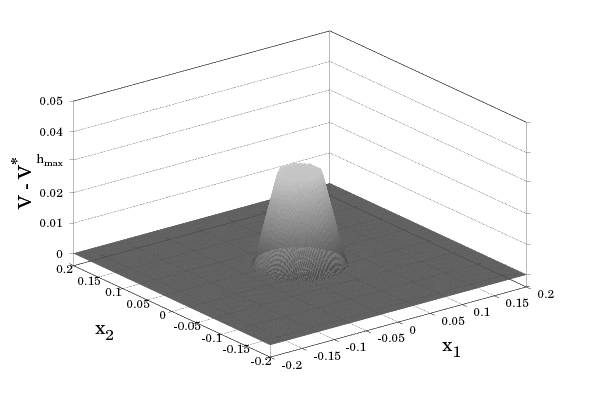}
\end{center}
\bf \caption{\it The reductions of the closed-loop game value function~$ V ${\rm ,} programmed maximin function~$ V^* $ and
their difference~$ V - V^* $ to the plane $ \: x_3 = x_4 = 0 ${\rm ,} $ \, T - t_0 = 2 \: $ in Example~{\rm \ref{Exa_44}}.
In order to see the graph of $ V - V^* $ clearer{\rm ,} a larger scale on the vertical axis is used there.}
\label{Fig_7}
\end{figure}

\begin{figure}
\begin{center}
\includegraphics[ width = 8cm, height = 5.4cm ]{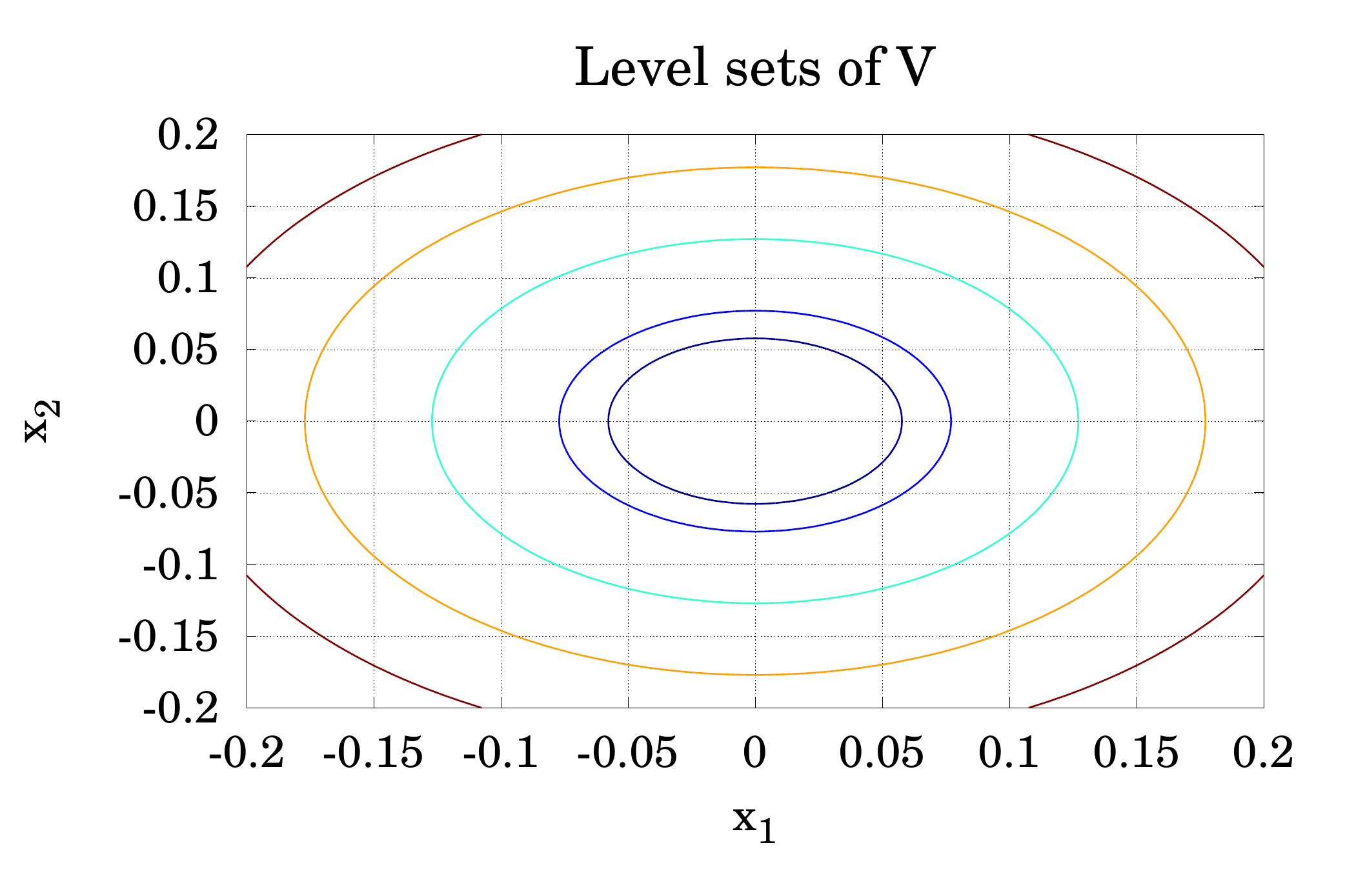}
\includegraphics[ width = 8cm, height = 5.4cm ]{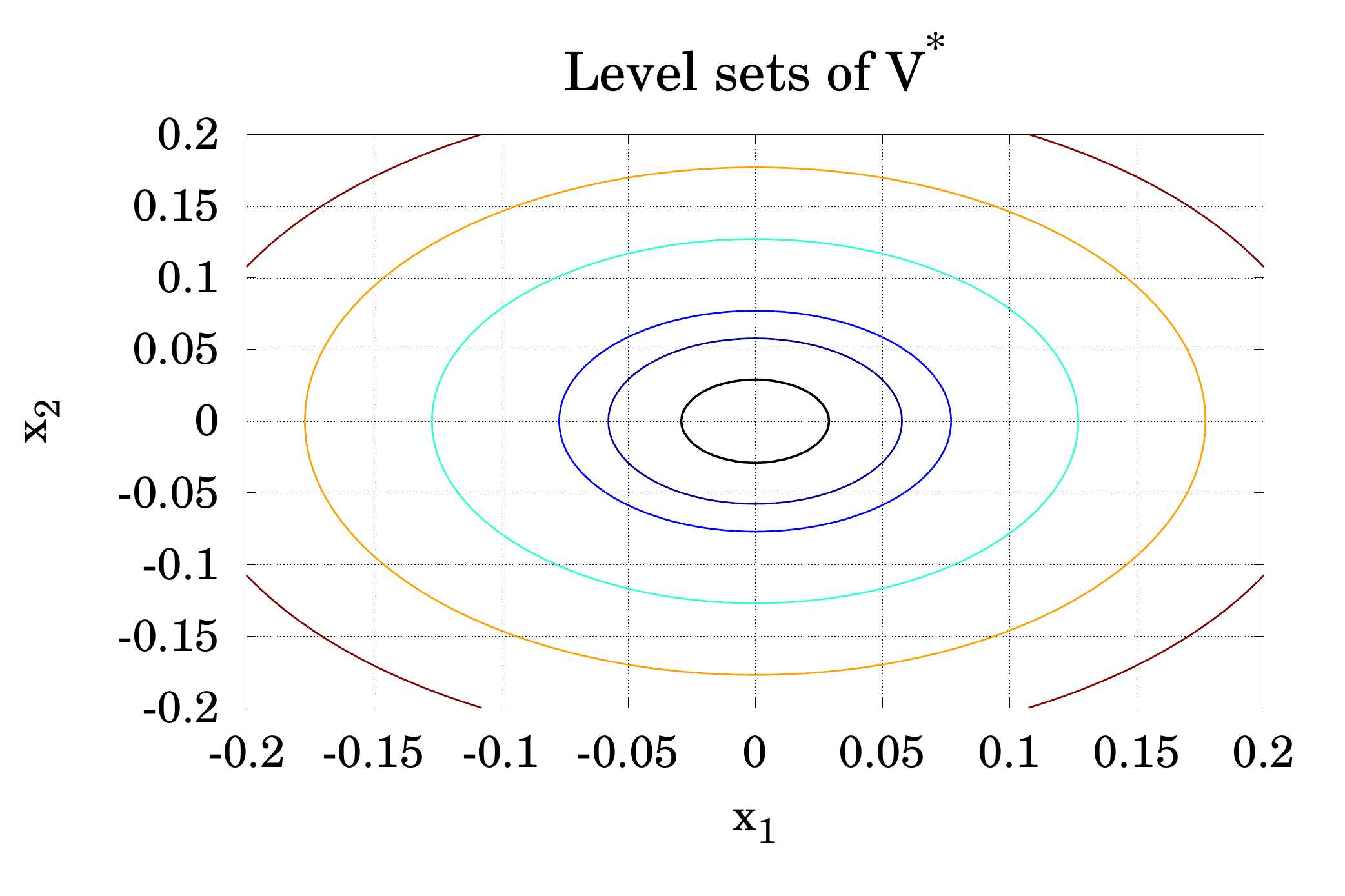}
\end{center}
\bf \caption{\it Level sets of the reductions of the functions $ V $ and $ V^* $ to the plane $ \: x_3 = x_4 = 0 ${\rm ,}
$ \, T - t_0 = 2 \: $ in Example~{\rm \ref{Exa_44}}.}
\label{Fig_8}
\end{figure}

\begin{figure}
\begin{center}
\includegraphics[ width = 8cm, height = 5.4cm ]{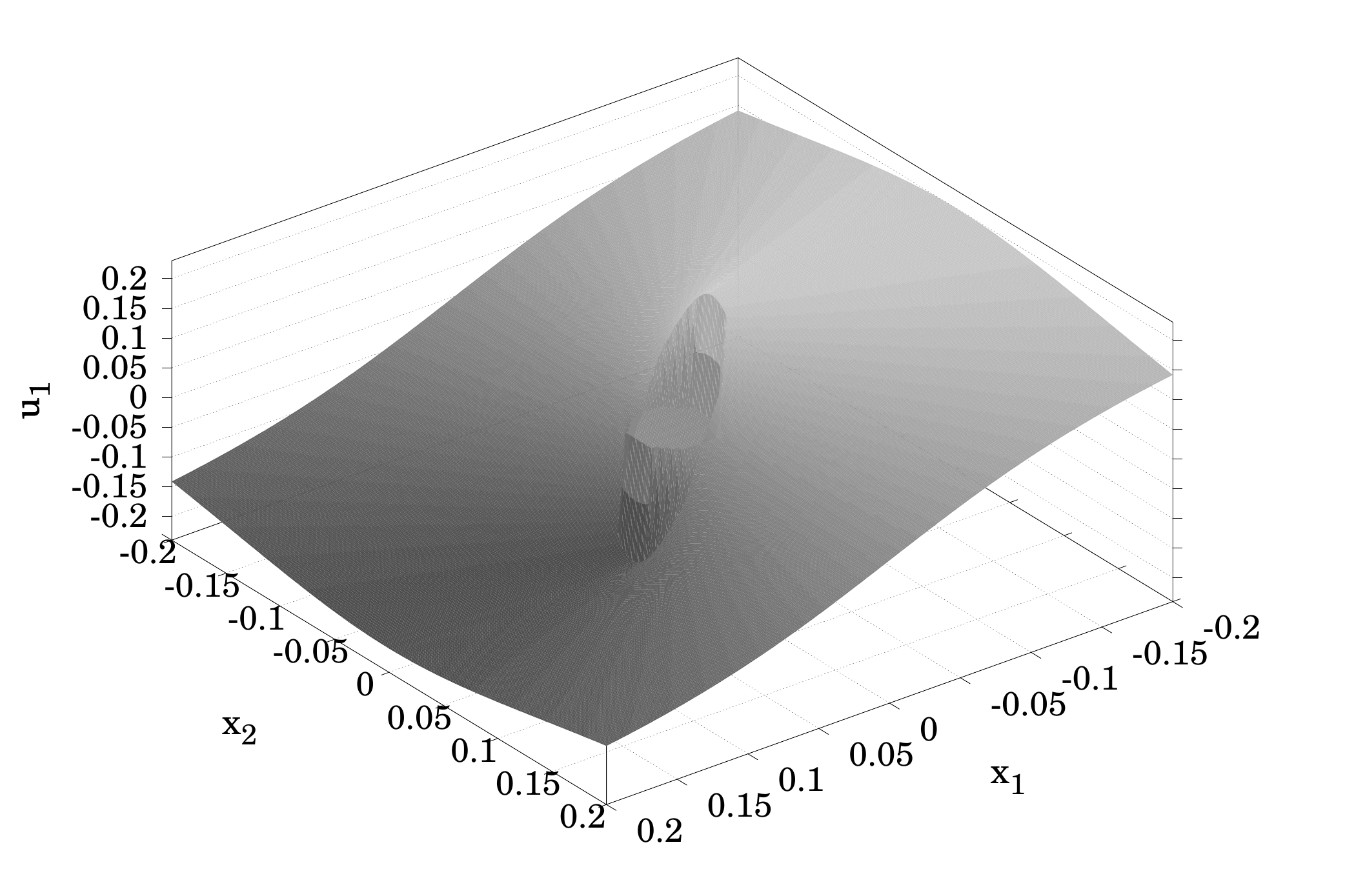}
\includegraphics[ width = 8cm, height = 5.4cm ]{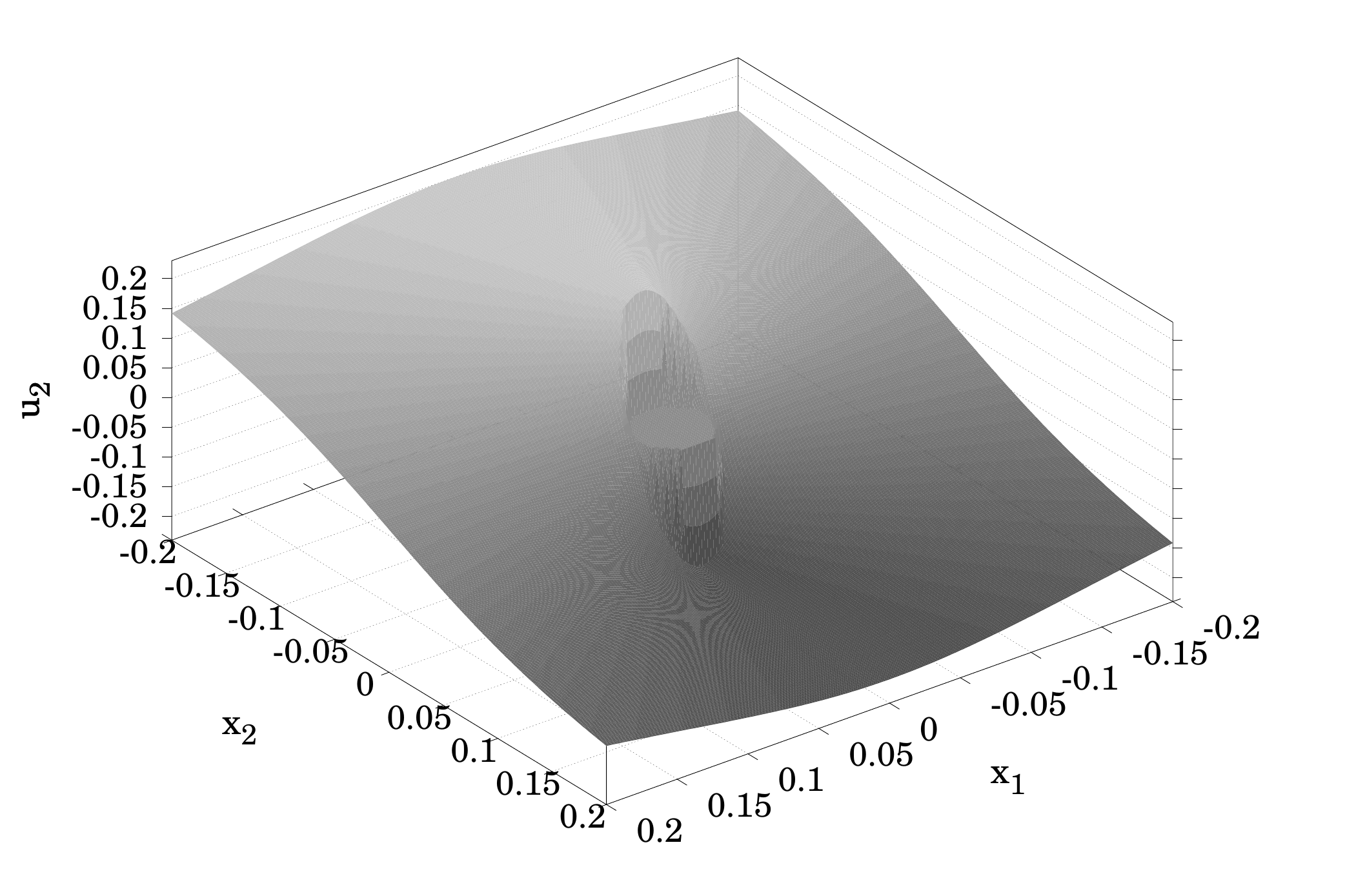} \\
\includegraphics[ width = 8cm, height = 5.4cm ]{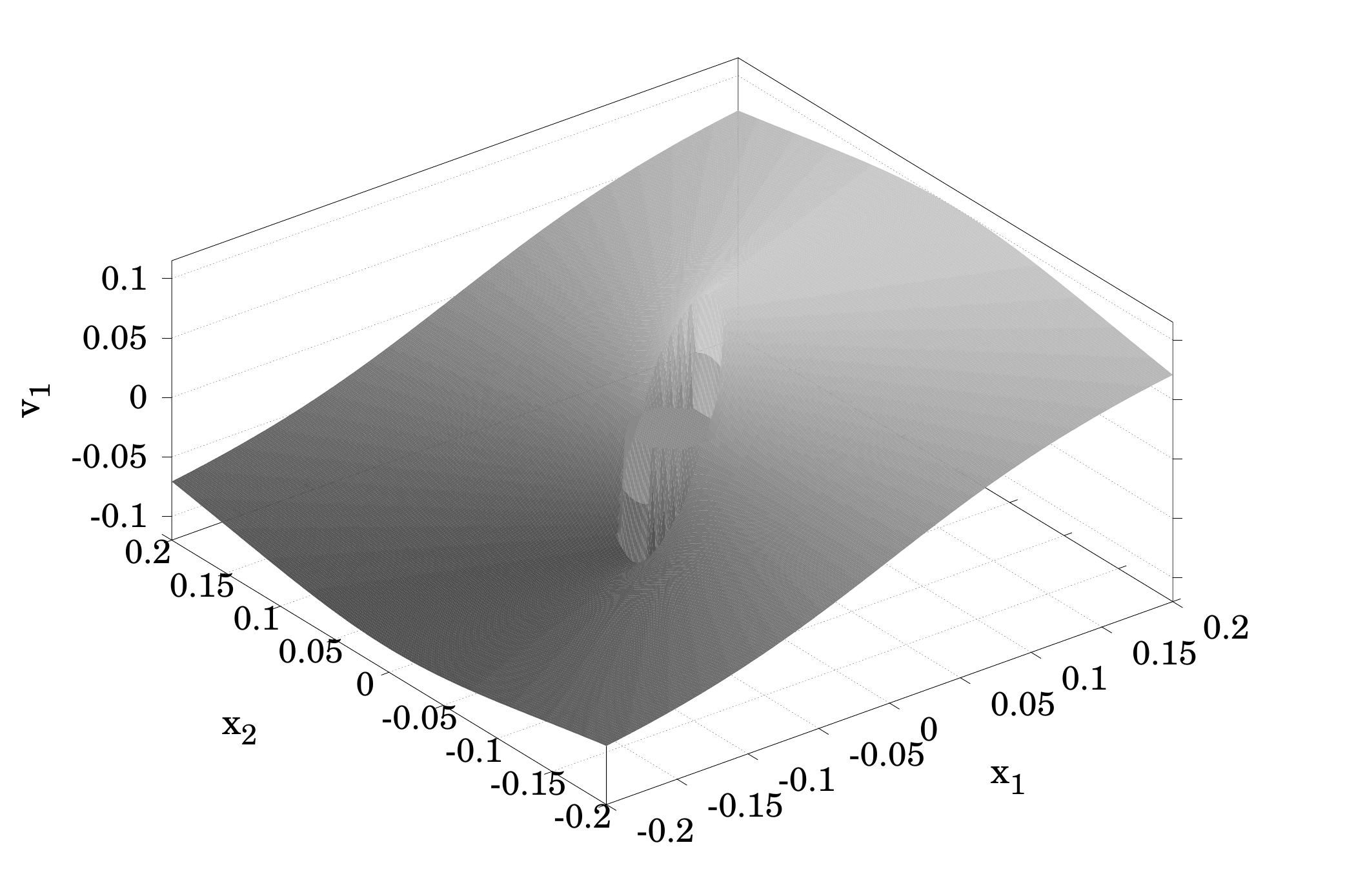}
\includegraphics[ width = 8cm, height = 5.4cm ]{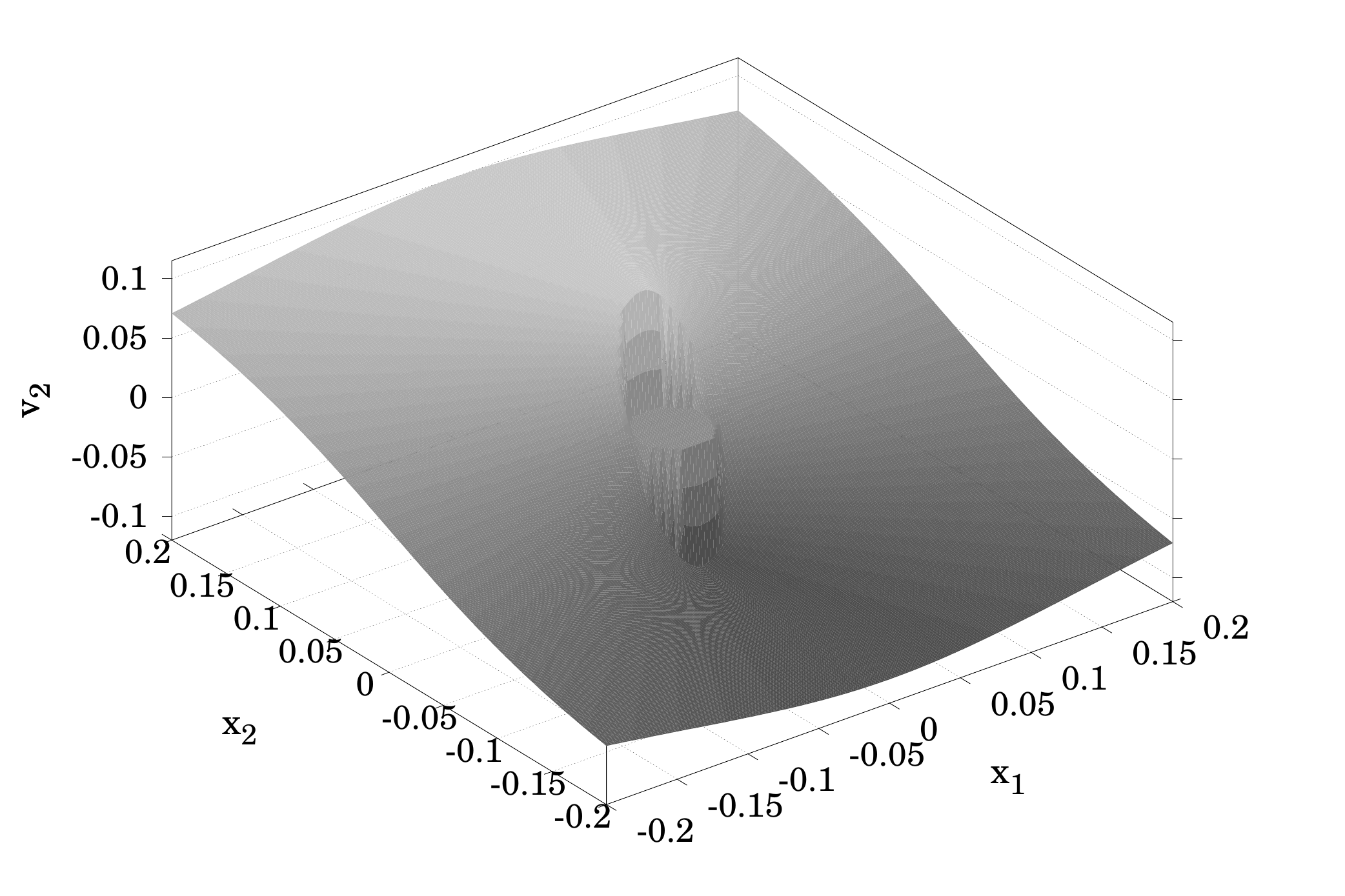}
\end{center}
\bf \caption{\it The reductions of the saddle feedback control strategies $ \, u = (u_1, u_2)^{\top} \, $ and
$ \, v = (v_1, v_2)^{\top} \, $ to the plane $ \: x_3 = x_4 = 0 ${\rm ,} $ \, T - t_0 = 2 \: $ in Example~{\rm \ref{Exa_44}}.
For the sake of convenience{\rm ,} different viewpoints for the horizontal axes are used in the graphs of $ u_1, u_2 $ and
$ v_1, v_2 $.}
\label{Fig_9}
\end{figure}

The vector of the first two initial state coordinates~$ x_1, x_2 $ was taken from the uniform grid on the square
$ \, [-0.2, 0.2] \times [-0.2, 0.2] \, $ with the spatial step $ \, 4 \times 10^{-3} $. The two-dimensional unit
sphere~$ L_2 $ was parametrized by one angle with values in the interval~$ [0, 2 \pi) $. The uniform grid for
the latter consisted of $ 10^4 $ points. In our implementation of (\ref{F_71}), the maximum over $ L_2 $ was computed
directly around this grid. Furthermore, the number
\begin{equation}
h_{\max} \:\, \stackrel{\mathrm{def}}{=} \:\, \max\limits_{t \, \in \, [t_0, T]} \, h(t)  \label{F_88}
\end{equation}
was computed directly around the uniform grid on $ [t_0, T] $ consisting of $ 10^4 + 1 $ points. We obtained
$ \, h_{\max} \approx 0.0307 $. The approximate runtime of computing the mentioned reductions of the programmed
maximin function, closed-loop game value function and saddle feedback control strategies has been 6.994~s totally and
0.000685619~s per point.

The last subfigure of Fig.~\ref{Fig_7} shows that the programmed maximin and closed-loop game value functions differ from
each other in some neighborhood of the origin ($ V - V^* $ reaches the value $ h_{\max} $ there). \qed
\end{example}

\begin{example}  \label{Exa_45}  \rm
Consider the problem~(\ref{F_73_2}) from Example~\ref{Exa_41_2} with
\begin{equation}
\arraycolsep=1.5pt
\def\arraystretch{2}
\begin{array}{c}
c(x) \:\: = \:\: -1 \: - \: 3 \:
\exp \left( -4 \: \| x \: - \: (1, \, 1, \, 0, \, 0, \ldots, \, 0) \|_{\mathbb{R}^n}^2 \right) \:\: < \:\: 0, \\
\sigma \: \equiv \: 0, \quad \eta(x) \: = \: \dfrac{1}{2} \, \left< Ax, x \right>, \\
A \:\: = \:\: \mathrm{diag} \: [0.25, \, 1, \, 0.5, \, 0.5, \, \ldots, \, 0.5] \:\: \in \:\: \mathbb{R}^{n \times n}.
\end{array}  \label{F_85_2}
\end{equation}
Then
\begin{equation}
\begin{aligned}
\{ x \in \mathbb{R}^n \: \colon \: \mathrm{D} \eta(x) \, = \, 0 \} \:\: & = \:\: \{ x' \} \:\: = \:\: \{ 0 \} \\
\:\: & = \:\: \mathrm{Arg} \, \min\limits_{x \in \mathbb{R}^n} \, \eta(x) \:\: \subset \:\:
\mathrm{Arg} \, \min\limits_{x \in \mathbb{R}^n} \, \sigma(x) \:\: = \:\: \mathbb{R}^n,
\end{aligned}  \label{F_88}
\end{equation}
i.\,e., the condition~(\ref{F_82_2}) holds. By using the final statement of Example~\ref{Exa_41_2}, one can compute
the related value function~$ V $ and optimal feedback (closed-loop) control strategy~$ u^*_{\scriptsize \mbox{c-l}} $ at any
selected position~$ \: (t, x) \, \in \, [0, T) \times \mathbb{R}^n $.

Now let the deterministic system~(\ref{F_76}) be perturbed by stochastic noise, so that the resulting system becomes
\begin{equation}
\left\{ \begin{aligned}
& dx(t) \:\: = \:\: c(x(t)) \, u(t) \, dt \: + \: \Lambda \, dw(t; t_0), \\
& x(t_0) \, = \, x_0, \\
& u(t) \: \in \: U \: = \: \left\{ v \in \mathbb{R}^n \: \colon \: \| v \| \leqslant 1 \right\}, \\
& t \in [t_0, T],
\end{aligned} \right.  \label{F_76_3}
\end{equation}
where $ \: (t_0, x_0) \, \in \, [0, T) \times \mathbb{R}^n \: $ is a deterministic initial position,
$ \Lambda \in \mathbb{R}^{n \times n} $ is a constant noise intensity matrix, $ w(\cdot; t_0) $ is an $ n $-dimensional
standard Brownian motion (Wiener process) on the time interval~$ [t_0, T] $, and the stochastic ordinary differential
equations are understood in the It\^o sense. An open-loop control strategy can also be a stochastic process if it is
obtained from a closed-loop map. It is reasonable to assess the control quality through the mean value
\begin{equation}
\mathbb{E} \left[ \sigma(x(T)) \: + \: \int\limits_{t_0}^T \eta(x(t)) \, dt \right] \:\: = \:\:
\mathbb{E} \left[ \int\limits_{t_0}^T \eta(x(t)) \, dt \right] \:\: = \:\:
\int\limits_{t_0}^T \mathbb{E} [\eta(x(t))] \, dt.  \label{F_89}
\end{equation}
The lower this value, the higher the control quality. The control goal can be informally interpreted as mitigating
the random vibrations whose strength is described by the mean running cost~(\ref{F_89}).

Set the noise intensity matrix as
\begin{equation}
\Lambda \:\: = \:\: \mathrm{diag} \: [\varepsilon, \, \varepsilon, \, 0, \, 0, \, \ldots, \, 0] \:\: \in \:\:
\mathbb{R}^{n \times n}, \quad \varepsilon = 0.3,  \label{F_90}
\end{equation}
i.\,e., the noise is diagonal and appears only in the first two state dynamic equations (it is degenerate for $ n \geqslant 3 $).
Furthermore, take the state space dimension, time horizon and initial position as follows:
\begin{equation}
n = 6, \quad T = 2, \quad t_0 = 0, \quad x_0 \: = \: (-0.5, \, 0.5, \, 0.3, \, -0.3, \, 0.3, \, -0.3)^{\top}.
\label{F_91}
\end{equation}

Let $ \: u^*_{\scriptsize \mbox{o-l}} \, \colon \, [t_0, T] \to U \: $ be an optimal open-loop control strategy in
the deterministic problem~(\ref{F_76}),~(\ref{F_77_2}) (corresponding to the case~$ \varepsilon = 0 $) with
the data~(\ref{F_85_2}), (\ref{F_91}), and recall that
$ \: u^*_{\scriptsize \mbox{c-l}} \, \colon \, [0, T] \times \mathbb{R}^n \, \to \, U \: $ denotes an optimal feedback
control law in this problem. Consider the two stochastic systems
\begin{equation}
\left\{ \begin{aligned}
& dx(t) \:\: = \:\: c(x(t)) \: u^*_{\scriptsize \mbox{o-l}}(t) \: dt \:\, + \:\, \Lambda \, dw(t; t_0), \quad
t \in [t_0, T], \\
& x(t_0) \, = \, x_0,
\end{aligned} \right.  \label{F_92}
\end{equation}
\begin{equation}
\left\{ \begin{aligned}
& dx(t) \:\: = \:\: c(x(t)) \: u^*_{\scriptsize \mbox{c-l}}(t, x(t)) \: dt \:\, + \:\, \Lambda \, dw(t; t_0), \quad
t \in [t_0, T], \\
& x(t_0) \, = \, x_0,
\end{aligned} \right.  \label{F_93}
\end{equation}
governed by the introduced open-loop and closed-loop strategies. Let $ J^*_{\varepsilon, \: \scriptsize \mbox{o-l}} $ and
$ J^*_{\varepsilon, \: \scriptsize \mbox{c-l}} $ be the mean values~(\ref{F_89}) for the trajectories of (\ref{F_92}) and
(\ref{F_93}), respectively. One can expect that the feedback control should mitigate the random vibrations better than
the open-loop control, i.\,e.,
$ \, J^*_{\varepsilon, \: \scriptsize \mbox{c-l}} < J^*_{\varepsilon, \: \scriptsize \mbox{o-l}}, \, $ while these quantities
are obviously equal to each other in the deterministic case~$ \varepsilon = 0 $.

In order to estimate $ J^*_{\varepsilon, \: \scriptsize \mbox{c-l}} $ for a high state space dimension~$ n $ (making nonlocal
approximations of $ u^*_{\scriptsize \mbox{c-l}} $ extremely difficult), we have implemented the piecewise constant control
policy that is recomputed every $ \, \Delta t_{\mathrm{recomp}} = 0.05 \, $ time units as the value of $ u^*_{\scriptsize \mbox{c-l}} $
at the current position (via the algorithm of Example~\ref{Exa_41_2}). Denote the related mean running cost~(\ref{F_89}) by
$ \hat{J}^*_{\varepsilon, \: \scriptsize \mbox{c-l}} $. Such an approximation technique is inspired by model predictive control (MPC)
approaches \cite{Wang2009,KangWilcox2017}. If the recomputation step~$ \Delta t_{\mathrm{recomp}} $ is small enough, it is also
reasonable to expect that
\begin{equation}
\hat{J}^*_{\varepsilon, \: \scriptsize \mbox{c-l}} \, < \, J^*_{\varepsilon, \: \scriptsize \mbox{o-l}}.  \label{F_94}
\end{equation}

The first subfigure of Fig.~\ref{Fig_10} indicates the graphs of the estimated mean value functions
$ \: [t_0, T] \ni t \, \longmapsto \, \mathbb{E} [\eta(x(t))] \: $ for the open-loop control system~(\ref{F_92}) and for
the specified MPC implementation of the feedback control system~(\ref{F_93}). The corresponding standard deviation estimates are
illustrated in the second subfigure. The graph of $ \: [t_0, T] \ni t \, \longmapsto \, \eta(x(t)) \: $ for the system~(\ref{F_92})
in the deterministic case~$ \varepsilon = 0 $ is also shown for comparison. These graphs have been constructed by performing
$ N = 1000 $ Monte Carlo iterations, and $ x^{[i]}(\cdot) $ denotes the state trajectory at the $ i $-th iteration,
$ i = \overline{1, N} $. The conjecture~(\ref{F_94}) indeed agrees with the presented numerical simulation results:
$ \: \hat{J}^*_{\varepsilon, \: \scriptsize \mbox{c-l}} \, \approx \, 0.116 \, < \, 0.188 \, \approx \, J^*_{\varepsilon, \: \scriptsize \mbox{o-l}} $.

\begin{figure}
\begin{center}
\includegraphics[ width = 13cm, height = 6cm ]{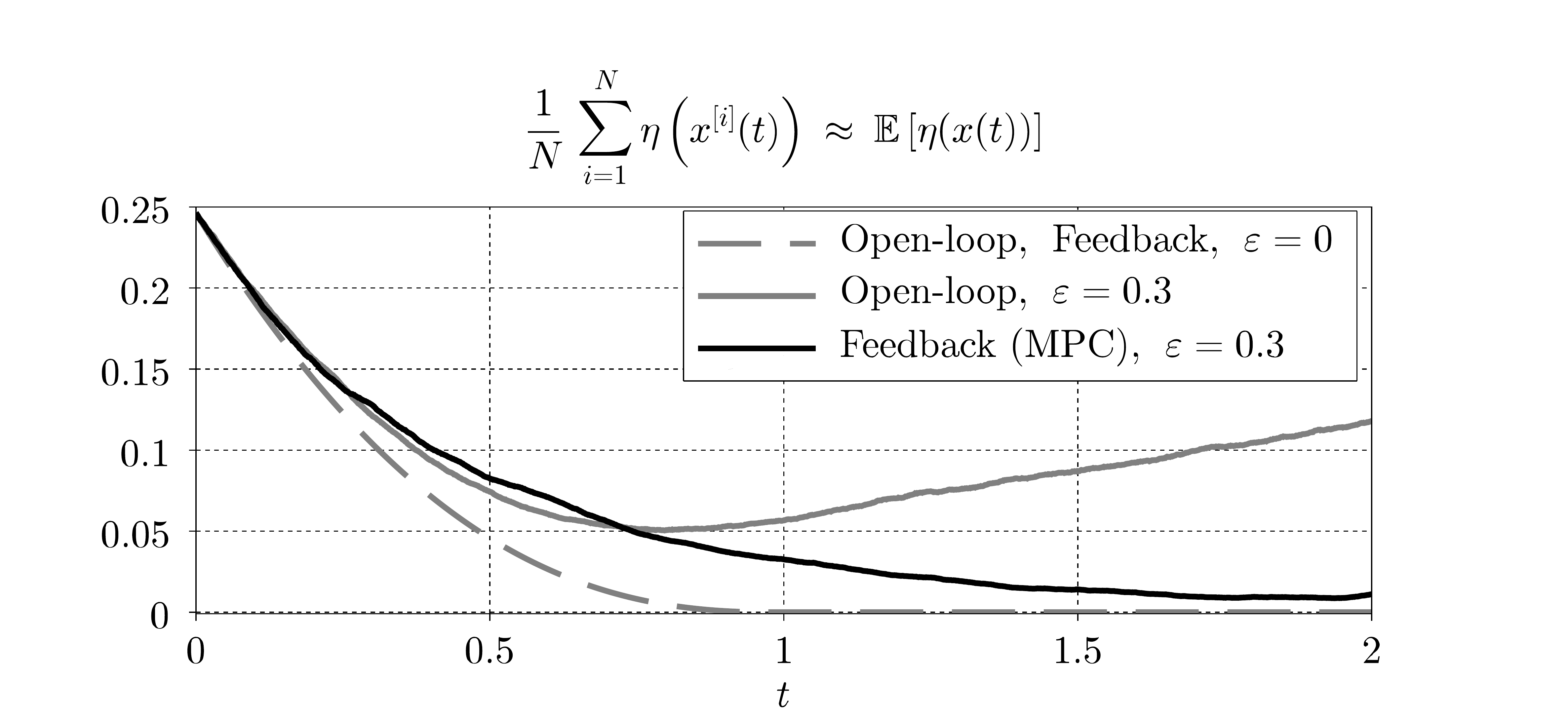}
\includegraphics[ width = 13cm, height = 6cm ]{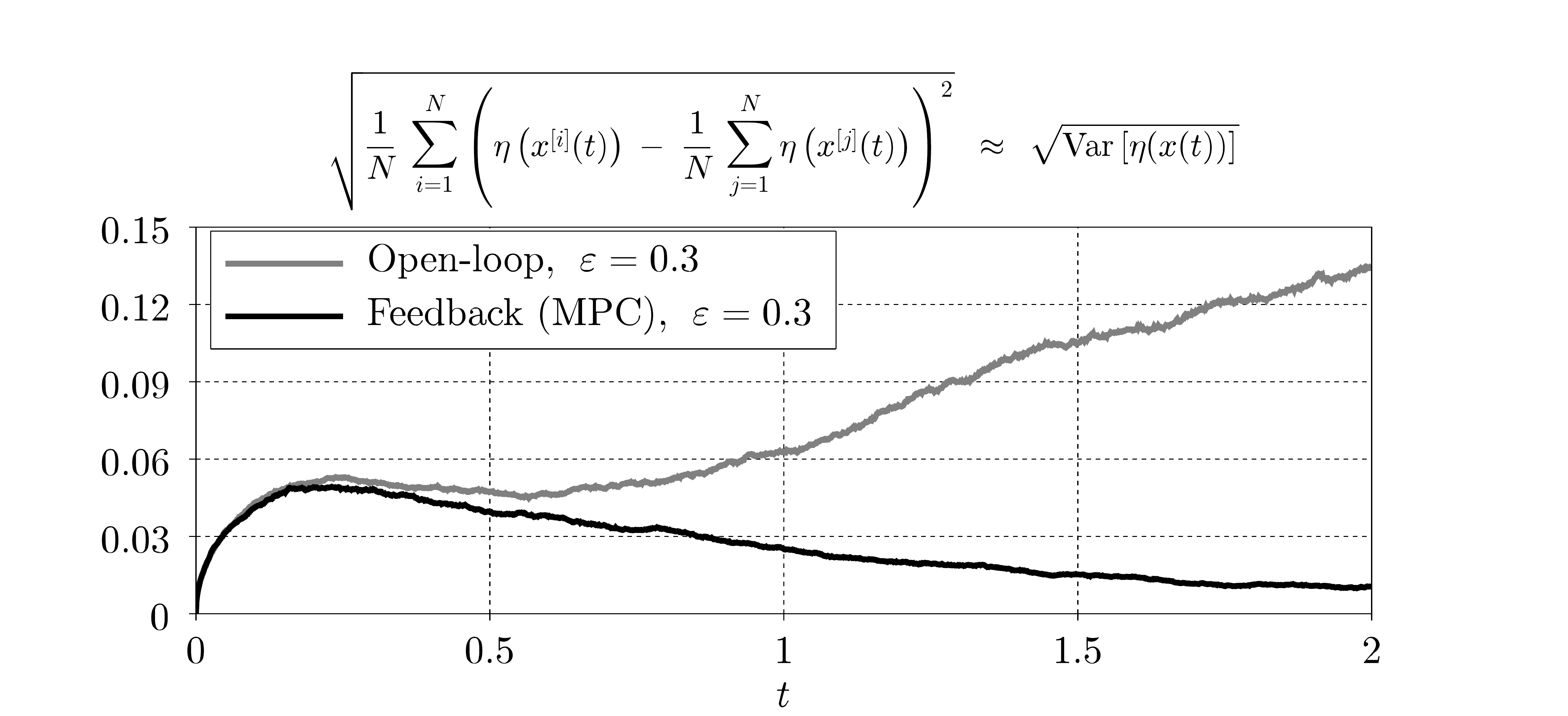}
\end{center}
\bf \caption{\it Estimates of the mean values~$ \mathbb{E} [\eta(x(t))] $ and standard deviations
$ \sqrt{\mathrm{Var} \, [\eta(x(t))]} $ for the open-loop control system~{\rm (\ref{F_92})} and for the specified MPC implementation
of the feedback control system~{\rm (\ref{F_93})} in Example~{\rm \ref{Exa_45}}.}
\label{Fig_10}
\end{figure}

When implementing the algorithm of Example~\ref{Exa_41_2} (for computing the optimal feedback strategy~$ u^*_{\scriptsize \mbox{c-l}} $
at selected positions), the numerical criterion of reaching the state
$ \: x \, = \, x' \, = \, 0 \, \in \, \mathrm{Arg} \, \min\limits_{x \in \mathbb{R}^n} \, \eta(x) \: $ was chosen as
$ \: \eta(x) - \eta(x') \, = \, \eta(x) \, < \, 5 \cdot 10^{-6} \: $ (the exact deterministic optimal control has zero action at
$ x = x' $). The $ 12 $-dimensional deterministic characteristic system was integrated via the same Runge--Kutta routine (with
the same initial stepsize guess, absolute and relative tolerances) as mentioned in Example~\ref{Exa_43}. The optimization over
the $ 7 $-dimensional unit sphere of the vectors~$ \left( p_0, \tilde{p}^* \right) $ was performed by using Powell's algorithm of
\cite{PressNR2007} with the tolerance parameter~$ 10^{-5} $. No special method was needed to exclude the value~$ \tilde{p}^* = 0 $,
since it was not approached enough (at least with the tolerance~$ 10^{-5} $) in the executed optimization iterations. The sphere was
parametrized in the standard way. The related representation was obtained from (\ref{F_86}) by replacing $ n = 6 $ with $ n + 1 = 7 $.
For each optimization process, $ 5 $ starting points were randomly generated in line with the uniform angles distribution.

The It\^o stochastic differential equations have been solved by means of the Euler--Maruyama scheme that coincides with
the Milstein scheme for the constant and diagonal noise intensity matrix~(\ref{F_90})~\cite{KloedenPlaten1995,Carletti2006}.
The corresponding time step was taken as $ \, \Delta t_{\mathrm{SDE}} = 10^{-5} $. Under certain smoothness and Lipschitz
continuity conditions on the drift vector function and noise intensity matrix function, the Milstein scheme has the first
strong convergence order (while the order of the Euler--Maruyama scheme in general equals~$ 0.5 $ if the noise intensity
matrix is not constant). One can expect that the first order of accuracy should be preserved in our MPC implementation of
the system~(\ref{F_93}), because the related control policy is piecewise constant and the ratio
$ \: \Delta t_{\mathrm{recomp}} / \Delta t_{\mathrm{SDE}} \, = \, 5 \cdot 10^3 \: $ is integer. When integrating the open-loop control
system~(\ref{F_92}), one may face an additional numerical error caused by the discontinuity of $ u^*_{\scriptsize \mbox{o-l}} $
at the time of entering the state~$ x = x' $. Since this has been the only discontinuity of $ u^*_{\scriptsize \mbox{o-l}} $ on
the whole time interval~$ [t_0, T] $, the resulting error has not appeared to be significant.

As follows from Fig.~\ref{Fig_10}, the feedback MPC approach allows for a rather successful random vibrations mitigation by
periodic recomputation of the control according to the current position. The open-loop control cannot be adapted in this way
and is therefore unstable. Starting from some time, it has zero action as if $ x = x' $, while the noise is likely to move
the state away from~$ x' $. \qed
\end{example}

\section{Conclusion}

This paper extends the considerations of the works~\cite{DarbonOsher2016,ChowDarbonOsherYin2017} concerning
curse-of-dimensionality-free numerical approaches to solve certain types of Hamilton--Jacobi equations arising in optimal control
problems, differential games and elsewhere. A rigorous formulation and justification for the extended Hopf--Lax formula of
\cite{ChowDarbonOsherYin2017} is provided together with novel theoretical and practical discussions including useful recommendations.
By using the method of characteristics, the solutions of some problem classes under convexity/concavity conditions on Hamiltonians
(in particular, the solutions of Hamilton--Jacobi--Bellman equations in optimal control problems) are evaluated separately at different
initial positions. This allows for the avoidance of the curse of dimensionality, as well as for choosing arbitrary computational
regions. The corresponding feedback control strategies are obtained at selected positions without approximating the partial
derivatives of the solutions. The numerical simulation results demonstrate the high potential of the proposed techniques.

Despite the indicated advantages, the related approaches still have a limited range of applicability (recall
Remarks~\ref{Rem_14}, \ref{Rem_14_0}, \ref{Rem_23}--\ref{Rem_42}), and their extensions to Hamilton--Jacobi--Isaacs equations in
zero-sum two-player differential games are currently developed only for rather narrow classes of linear control systems (as shown in
Section~4 and Appendix). That is why further extensions are worth investigating. In particular, it is relevant to find a wider
description for the classes of optimal control problems, where the finite-dimensional optimization in the algorithms of computing
the value functions can be performed over bounded sets. Regarding the second main conjecture of \cite{ChowDarbonOsherYin2017}
(i.\,e., the extended Hopf formula) that may be applied to some classes of nonlinear differential games, its rigorous formulation and
justification also remains an open problem.

Finally, note that the paper \cite{KangWilcox2017} proposed to solve boundary value problems for characteristic systems numerically
with the aim to approximate the value functions of optimal control problems at separate sparse grid nodes. However, this may
lead to incorrect results at the initial positions for which the boundary value problems have multiple solutions and the value functions
are nonsmooth (recall Fig.~\ref{Fig_5} in Example~\ref{Exa_43}). Therefore, it seems promising to combine the approaches of the current
work with the sparse grid techniques of \cite{KangWilcox2017} so as to construct global approximations of the value functions and
corresponding feedback control laws in domains of relatively high dimensions.

\section*{Acknowledgements}

This work was supported in part by AFOSR/AOARD grant FA2386-16-1-4066.

\appendix

\section{Appendix}

Let us introduce one more class of differential games for which a single programmed iteration is enough to reach
the closed-loop game value. The corresponding result was derived in \cite[\S V.2]{SubbotinChentsov1981}.

Consider the linear differential game~(\ref{F_40}),~(\ref{F_41}) under Assumptions~\ref{Ass_25},~\ref{Ass_29}.
In addition to the notations presented in the introduction and Section~4, adopt that
\begin{equation}
\arraycolsep=1.5pt
\def\arraystretch{2}
\begin{array}{c}
\| y \|_j^0 \:\, \stackrel{\mathrm{def}}{=} \:\, \max\limits_{i \: \in \: \{1, 2, \ldots, j\}} |y_i|, \quad
\| y \|_j^1 \: \stackrel{\mathrm{def}}{=} \: \sum\limits_{i = 1}^j |y_i|, \\
\mathcal{B}_j^0(y, \varepsilon) \:\: \stackrel{\mathrm{def}}{=} \:\: \left\{ v \in \mathbb{R}^j \: \colon \:
\| v - y \|_j^0 \, \leqslant \, \varepsilon \right\}
\end{array}  \label{F_50_2}
\end{equation}
for all $ \: j \in \mathbb{N} $, $ \: y \, = \, (y_1, y_2, \ldots, y_j) \, \in \, \mathbb{R}^j \: $ and
$ \varepsilon \geqslant 0 $. Several more conditions are imposed.

\begin{assumption}  \label{Ass_31}
There exist a vector~$ y_0 \in \mathbb{R}^k ${\rm ,} numbers $ r \in [0, +\infty) ${\rm ,} $ \bar{r} \in [0, +\infty) $ and
continuous functions
$$
\arraycolsep=1.5pt
\def\arraystretch{1.5}
\begin{array}{c}
\hat{y} \: \colon \: [0, T] \, \to \, \mathbb{R}^k, \quad \hat{z} \: \colon \: [0, T] \, \to \, \mathbb{R}^k, \\
h_1 \: \colon \: [0, T] \, \to \, [0, +\infty), \quad h_{21} \: \colon \: [0, T] \, \to \, [0, +\infty), \\
h_{22} \: \colon \: [0, T] \, \to \, [0, +\infty),
\end{array}
$$
such that the set~$ \mathcal{M} $ from Assumption~{\rm \ref{Ass_29}} is determined by
\begin{equation}
\mathcal{M} \:\: = \:\: y_0 \: + \: \mathcal{B}_k^0(O_k, r) \: + \: \mathcal{B}_k(O_k, \bar{r})  \label{F_54}
\end{equation}
and{\rm ,} for all $ \: t \in [0, T] ${\rm ,} $ \, l \in L_k, \: $ one has
\begin{equation}
\arraycolsep=1.5pt
\def\arraystretch{2}
\begin{array}{c}
\{ \Phi(T, t) \, B_1(t) \, U_1(t) \}_k \:\: = \:\: \mathcal{B}^0_k \left( \hat{y}(t), \, h_1(t) \right), \\
\int\limits_t^T \max\limits_{u_2 \, \in \, U_2} \left< l, \: \{ \Phi(T, \xi) \, B_2(\xi) \, u_2 \}_k \right> \: d \xi \:\: = \:\:
h_{21}(t) \: + \: h_{22}(t) \, \| l \|^1_k \: + \: \left< l, \, \hat{z}(t) \right>, \\
h_{22}(t) \:\: \leqslant \:\: \int\limits_t^T h_1(\xi) \, d \xi \: + \: r.
\end{array}  \label{F_55}
\end{equation}
\end{assumption}

Denote also
\begin{equation}
h_3(t) \:\: \stackrel{\mathrm{def}}{=} \:\: \int\limits_t^T h_1(\xi) \, d \xi \: + \: r \: - \: h_{22}(t) \quad
\forall t \in [0, T].  \label{F_56}
\end{equation}

The sought-after result can now be formulated.

\begin{theorem}{\rm \cite[\S V.2]{SubbotinChentsov1981}}  \label{Thm_32}
Under Assumptions~{\rm \ref{Ass_25}, \ref{Ass_29}, \ref{Ass_31},} the closed-loop game value for
{\rm (\ref{F_40}), (\ref{F_41})} at any position $ \: (t_0, x_0) \: \in \: [0, T] \times \mathbb{R}^n \: $ is represented as
\begin{equation}
V(t_0, x_0) \:\: = \:\: \max \, \left\{ V^*(t_0, x_0), \:\:
\max\limits_{t \, \in \, [t_0, T]} \, \{ h_{21}(t) \, - \, h_3(t) \} \: - \: \bar{r} \right\},  \label{F_57}
\end{equation}
where $ V^* $ is the programmed maximin function specified in Proposition~{\rm \ref{Pro_36}}.
\end{theorem}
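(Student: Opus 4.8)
The plan is to proceed exactly as for Theorem~\ref{Thm_38}, via the programmed iteration procedure of \cite[\S IV.5]{SubbotinChentsov1981}. That procedure generates a sequence of functions whose pointwise limit is the closed-loop game value $V$ and whose first term is the programmed maximin $V^*$ computed in Proposition~\ref{Pro_36}. For the problem class singled out by Assumptions~\ref{Ass_25},~\ref{Ass_29},~\ref{Ass_31}, I would show that a single application of the iteration operator to $V^*$ already yields $V$; equivalently, that the right-hand side of (\ref{F_57}) is a fixed point of that operator and is simultaneously $u$-stable and $v$-stable, so that by the uniqueness of the closed-loop value (see \cite[\S III.11]{Subbotin1995}) it must coincide with $V$.

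First I would write out one step of the iteration, namely the quantity $\sup_{v(\cdot)}\inf_{u(\cdot)}\max_{t\in[t_0,T]} V^*(t,\,x(t;\,t_0,x_0,u,v))$, and reduce it using the support-function form (\ref{F_62})--(\ref{F_53}) of $V^*$ together with the linear dynamics (\ref{F_40}) and the Cauchy matrix $\Phi$. The decisive simplification comes from Assumption~\ref{Ass_31}: the first relation in (\ref{F_55}) says that the $k$-projection of player~1's reachable set is the max-norm box $\mathcal{B}^0_k(\hat y(t),\,h_1(t))$, whose support function in direction $l$ is $\langle l,\hat y(t)\rangle + h_1(t)\,\|l\|_k^1$, while the second relation puts player~2's accumulated contribution into the matching form $h_{21}(t) + h_{22}(t)\,\|l\|_k^1 + \langle l,\hat z(t)\rangle$. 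The target set $\mathcal{M}$ of (\ref{F_54}) has support function $\langle l,y_0\rangle + r\,\|l\|_k^1 + \bar r\,\|l\|_k$, whose last term is the constant $\bar r$ on the sphere $L_k$ and thereby produces the isolated $-\bar r$ in (\ref{F_57}). Collecting the $\|l\|_k^1$-coefficients, the inner $\inf_u$ and the maximization over $l\in L_k$ decouple, and the intermediate-time maximum collapses to the scalar correction $\max_t\{h_{21}(t)-h_3(t)\}$ superimposed on $V^*$, with $h_3$ given by (\ref{F_56}).

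The crux of the argument is the third relation in (\ref{F_55}), i.e.\ $h_{22}(t)\le\int_t^T h_1(\xi)\,d\xi + r$, which is precisely $h_3(t)\ge 0$. This domination condition guarantees that in every relevant direction player~1's control authority accumulated on $[t,T]$ (the integrated box width $\int_t^T h_1+r$) is at least player~2's spreading term $h_{22}(t)$, so that once player~2 has forced the configuration realizing the intermediate value at some time $t$, player~1 can thereafter hold the $k$-projection inside the target tube and no further gain accrues. This is exactly what makes one iteration exact: applying the operator a second time cannot raise the value, the sequence stabilizes, and there is no analogue of the never-terminating behaviour of Example~\ref{Exa_26}. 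I would verify this fixed-point (stability) property directly on the candidate (\ref{F_57}).

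The hard part will be the rigorous justification of the single-step exactness: one must interchange the supremum over $v(\cdot)$, the infimum over $u(\cdot)$, and the maximum over the stopping time $t$, and then show that the resulting upper and lower estimates agree. The compactness and convexity in Assumption~\ref{Ass_25}, the continuity of the data, and the separated form (\ref{F_55}) are what let the saddle-point interchange go through and the intermediate-time supremum be attained; confirming that the candidate function is at once $u$-stable and $v$-stable is the step that carries the real content.
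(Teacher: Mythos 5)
You should be aware that the paper contains no proof of Theorem~\ref{Thm_32} at all: it is imported verbatim from \cite[\S V.2]{SubbotinChentsov1981}, and the Appendix merely states it alongside Assumptions~\ref{Ass_25}, \ref{Ass_29}, \ref{Ass_31}. So there is no in-paper argument to compare yours against line by line; the only basis for comparison is the method the surrounding text attributes to the source, namely that ``a single programmed iteration is enough to reach the closed-loop game value.'' Your plan is aligned with exactly that route: start from the programmed maximin $V^*$ of Proposition~\ref{Pro_36}, apply the iteration operator of \cite[\S IV.5]{SubbotinChentsov1981} once, and show the result is already $u$-stable and $v$-stable. Your structural observations are also sound: the support function of $\mathcal{B}^0_k(O_k,r)$ is $r\,\|l\|^1_k$, the Euclidean ball $\mathcal{B}_k(O_k,\bar r)$ contributes the constant $\bar r$ on $L_k$ (whence the isolated $-\bar r$ in (\ref{F_57})), and the third inequality in (\ref{F_55}) is precisely $h_3(t)\geqslant 0$ by (\ref{F_56}), which is indeed the domination condition that makes a single iteration exact.

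As a proof, however, the proposal has a genuine gap: everything that constitutes the actual content of the theorem is announced rather than carried out. You never evaluate the first iteration $\sup_{v(\cdot)}\inf_{u(\cdot)}\max_{t\in[t_0,T]}V^*\bigl(t,\,x(t;\,t_0,x_0,u,v)\bigr)$ in closed form, so the specific expression $\max_{t\in[t_0,T]}\{h_{21}(t)-h_3(t)\}-\bar r$ is asserted, not derived --- in particular it is never shown why the competing term is $h_{21}-h_3$ rather than some other combination of $h_1$, $h_{21}$, $h_{22}$, $r$, $\bar r$, nor why the maximization over the direction $l\in L_k$ and over the stopping time $t$ decouple in the way you claim. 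Likewise the fixed-point/stability verification, which you yourself identify as ``the step that carries the real content,'' is left entirely open, as are the interchange of $\sup_v$, $\inf_u$ and $\max_t$ and the attainment questions it raises. What you have is a correctly oriented roadmap, consistent with the method of \cite[\S V.2]{SubbotinChentsov1981}, but not a proof; completing it would require executing the two computations above, which is essentially to reproduce the cited proof itself.
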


Theorem~\ref{Thm_32} can be applied in the two subsequent examples.

\begin{example}{\rm \cite[\S V.2]{SubbotinChentsov1981}}  \label{Exa_33}  \rm
Consider the problem that appears from the game of Example~\ref{Exa_39} just by replacing
$ \, U_1 = \mathcal{B}_2(O_2, a_1) \, $ with $ \, U_1 = \mathcal{B}^0_2(O_2, a_1) $. Then the closed-loop game value
function is also represented as in (\ref{F_58}), but the programmed maximin function~$ V^* $ is now different. \qed
\end{example}

\begin{example}{\rm \cite[\S V.2]{SubbotinChentsov1981}}  \label{Exa_34}  \rm
For the game
\begin{equation}
\left\{ \begin{aligned}
& \dot{x}_1(t) \:\: = \:\: x_3(t) \: + \: v_1(t), \\
& \dot{x}_2(t) \:\: = \:\: x_4(t) \: + \: v_2(t), \\
& \dot{x}_3(t) \:\: = \:\: -\alpha \, x_3(t) \: + \: u_1(t) \: + \: v_3(t), \\
& \dot{x}_4(t) \:\: = \:\: -\alpha \, x_4(t) \: + \: u_2(t) \: + \: v_4(t), \\
& x(t) \: = \: (x_1(t), \, x_2(t), \, x_3(t), \, x_4(t))^{\top} \: \in \: \mathbb{R}^4, \\
& u(t) \: = \: (u_1(t), \, u_2(t))^{\top} \: \in \: U_1 \: = \: \mathcal{B}^0_2 \left( \omega^0, a \right), \\
& v(t) \: = \: (v_1(t), \, v_2(t), \, v_3(t), \, v_4(t))^{\top} \\
& \quad \:\:\:\:
\in \: U_2 \: = \: (-\mathcal{B}_2(\omega_*, b_*)) \, \times \, \mathcal{B}^0_2(\omega^*, b^*), \\
& \alpha > 0, \:\: b_* \geqslant 0, \:\: b^* \geqslant 0, \:\: a \geqslant b^* \:\: \mbox{are scalar constants}, \\
& \omega^0, \: \omega_*, \: \omega^* \:\: \mbox{are constant vectors in $ \mathbb{R}^2 $}, \\
& t \in [0, T], \quad  \mbox{$ T > 0 $ is fixed}, \\
& k = 2, \quad \mathcal{M} = \{ O_2 \}, \\
& \sigma(x(T)) \:\: = \:\: \| \{ x(T) \}_2 \|_2 \:\: = \:\: \sqrt{x_1^2(T) \, + \, x_2^2(T)} \:\: \longrightarrow \\
& \qquad\qquad\qquad\qquad\qquad\quad \:\:\,
\longrightarrow \:\: \inf_{u(\cdot)} \, \sup_{v(\cdot)} \:\: \mbox{or} \:\:
\sup_{v(\cdot)} \, \inf_{u(\cdot)} \, ,
\end{aligned} \right.  \label{F_59_0}
\end{equation}
Theorem~\ref{Thm_32} leads to the representation
\begin{equation}
\begin{aligned}
& V \left( t_0, \, x^0 \right) \:\: = \:\: \max \, \left\{ V^* \left( t_0, \, x^0 \right), \:\:
\max\limits_{t \, \in \, [t_0, T]} \{ (T - t) \, b_* \: - \: R_{\alpha}(T, t) \, (a - b^*) \} \right\} \\
& \forall \: \left( t_0, \, x^0 \right) \: \in \: [0, T] \times \mathbb{R}^4,
\end{aligned}  \label{F_59}
\end{equation}
where $ R_{\alpha} $ is defined as in (\ref{F_69_2}) and $ V^* $ is determined according to Proposition~\ref{Pro_36}. \qed
\end{example}


\begin{thebibliography}{}

\bibitem{DarbonOsher2016}
Darbon,~J. and Osher,~S.
Algorithms for overcoming the curse of dimensionality for certain Hamilton--Jacobi equations arising in control theory and elsewhere.
{\it Research in the Mathematical Sciences} 2016; {\bf 3}:~19.

\bibitem{ChowDarbonOsherYin2017}
Chow,~Y.\,T., Darbon,~J., Osher,~S., and Yin,~W.
Algorithm for overcoming the curse of dimensionality for state-dependent Hamilton--Jacobi equations. 2017.
URL: https://arxiv.org/abs/1704.02524

\bibitem{Subbotin1995}
Subbotin,~A.\,I.
{\it Generalized Solutions of First-Order PDEs{\rm :} The Dynamical Optimization Perspective}.
Birkhauser: Boston, 1995.

\bibitem{Melikyan1998}
Melikyan,~A.\,A.
{\it Generalized Characteristics of First Order PDEs{\rm :} Application in Optimal Control and Differential Games}.
Birkhauser: Boston, 1998.

\bibitem{YongZhou1999}
Yong,~J. and Zhou,~X.\,Yu.
{\it Stochastic Controls{\rm :} Hamiltonian Systems and HJB Equations}.
Springer-Verlag: New York, 1999.

\bibitem{FlemingSoner2006}
Fleming,~W.\,H. and Soner,~H.\,M.
{\it Controlled Markov Processes and Viscosity Solutions}.
Springer-Verlag: New York, 2006.

\bibitem{BardiCapuzzoDolcetta2008}
Bardi,~M. and Capuzzo-Dolcetta,~I.
{\it Optimal Control and Viscosity Solutions of Hamilton--Jacobi--Bellman Equations}.
Birkhauser: Boston, 2008.

\bibitem{Yong2015}
Yong,~J.
{\it Differential Games{\rm :} A Concise Introduction}.
World Scientific Publishing: Singapore, 2015.

\bibitem{CrandallLions1984}
Crandall,~M.\,G. and Lions,~P.-L.
Two approximations of solutions of Hamilton--Jacobi equations.
{\it Mathematics of Computation} 1984; {\bf 43}: 1--19.

\bibitem{OsherShu1991}
Osher,~S. and Shu,~C.-W.
High order essentially non-oscillatory schemes for Hamilton--Jacobi equations.
{\it SIAM Journal on Numerical Analysis} 1991; {\bf 28}(4): 907--922.

\bibitem{JiangPeng2000}
Jiang,~G. and Peng,~D.\,P.
Weighted ENO schemes for Hamilton--Jacobi equations.
{\it SIAM Journal on Scientific Computing} 2000; {\bf 21}(6): 2126--2143.

\bibitem{ZhangShu2003}
Zhang,~Y.-T. and Shu,~C.-W.
High-order WENO schemes for Hamilton--Jacobi equations on triangular meshes.
{\it SIAM Journal on Scientific Computing} {\bf 2003}; 24(3): 1005--1030.

\bibitem{BokanForcadelZidani2010}
Bokanowski,~O., Forcadel,~N., and Zidani,~H.
Reachability and minimal times for state constrained nonlinear problems without any controllability assumption.
{\it SIAM Journal on Control and Optimization} 2010; {\bf 48}: 4292--4316.

\bibitem{BokanCristianiZidani2010}
Bokanowski,~O., Cristiani,~E. and Zidani,~H.
An efficient data structure and accurate scheme to solve front propagation problems.
{\it Journal of Scientific Computing} 2010; {\bf 42}(2): 251--273.

\bibitem{ROCHJ2017}
Bokanowski,~O., Desilles,~A., Zidani,~H., and Zhao,~J.
User's guide for the ROC-HJ solver: Reachability, Optimal Control, and Hamilton--Jacobi equations.
May 10, 2017. Version 2.3.
URL: http://uma.ensta-paristech.fr/soft/ROC-HJ/

\bibitem{FalconeFerretti1998}
Falcone,~M. and Ferretti,~R.
Convergence analysis for a class of high-order semi-Lagrangian advection schemes.
{\it SIAM Journal on Numerical Analysis} 1998; {\bf 35}(3): 909--940.

\bibitem{Falcone2006}
Falcone,~M.
Numerical methods for differential games based on partial differential equations.
{\it International Game Theory Review} 2006; {\bf 8}: 231--272.

\bibitem{CristianiFalcone2007}
Cristiani,~E. and Falcone,~M.
Fast semi-Lagrangian schemes for the Eikonal equation and applications.
{\it SIAM Journal on Numerical Analysis} 2007; {\bf 45}: 1979--2011.

\bibitem{OsherSethian1988}
Osher,~S. and Sethian,~J.
Fronts propagating with curvature-dependent speed: Algorithms based on Hamilton--Jacobi formulations.
{\it Journal of Computational Physics} 1988; {\bf 79}: 12--49.

\bibitem{Osher1993}
Osher,~S.
A level set formulation for the solution of the Dirichlet problem for Hamilton--Jacobi equations.
{\it SIAM Journal on Mathematical Analysis} 1993; {\bf 24}(5): 1145--1152.

\bibitem{Sethian1999}
Sethian,~J.
{\it Level Set Methods and Fast Marching Methods}.
Cambridge University Press: New~York, 1999.

\bibitem{OsherFedkiw2003}
Osher,~S. and Fedkiw,~R.
{\it Level Set Methods and Dynamic Implicit Surfaces}.
Springer-Verlag: New~York, 2003.

\bibitem{MitchellBayenTomlin2001}
Mitchell,~I., Bayen,~A., and Tomlin,~C.
Computing reachable sets for continuous dynamic games using level set methods. 2002.
URL: http://hybrid.stanford.edu/~bayen/publications.html

\bibitem{MitchellBayenTomlin2005}
Mitchell,~I., Bayen,~A., and Tomlin,~C.
A time-dependent Hamilton--Jacobi formulation of reachable sets for continuous dynamic games.
{\it IEEE Transactions on Automatic Control} 2005; {\bf 50}(7): 947--957.

\bibitem{MitchellLevelSetToolbox2012}
Mitchell,~I.
{\it A Toolbox of Level Set Methods}.
Department of Computer Science, University of British Columbia. 2012. URL: http://www.cs.ubc.ca/$\sim$mitchell/ToolboxLS

\bibitem{Bellman1957}
Bellman,~R.
{\it Dynamic Programming}.
Princeton University Press: Princeton, 1957.

\bibitem{Bellman1961}
Bellman,~R.
{\it Adaptive Control Processes{\rm :} A Guided Tour}.
Princeton University Press: Princeton, 1961.

\bibitem{McEneaney2006}
McEneaney,~W.\,M.
{\it Max-Plus Methods in Nonlinear Control and Estimation}.
Birkhauser: Boston, 2006.

\bibitem{McEneaney2007}
McEneaney,~W.\,M.
A curse-of-dimensionality-free numerical method for solution of certain HJB PDEs.
{\it SIAM Journal on Control and Optimization} 2007; {\bf 46}(4): 1239--1276.

\bibitem{McEneaneyDeshpandeGaubert2008}
McEneaney,~W.\,M., Deshpande,~A., and Gaubert,~S.
Curse-of-complexity attenuation in the curse-of-dimensionality-free method for HJB PDEs.
{\it Proceedings of The 2008 American Control Conference} 2008: 4684--4690.

\bibitem{McEneaneyKluberg2011}
McEneaney,~W.\,M. and Kluberg,~J.
Convergence rate for a curse-of-dimensionality-free method for a class of HJB PDEs.
{\it SIAM Journal on Control and Optimization} 2009; {\bf 48}(5): 3052--3079.

\bibitem{GaubertMcEneaneyQu2011}
Gaubert,~S., McEneaney,~W.\,M., and Qu,~Z.
Curse of dimensionality reduction in max-plus based approximation methods: theoretical estimates and improved pruning algorithms.
{\it Proceedings of The 50th IEEE Conference on Decision and Control} 2011: 1054--1061.

\bibitem{AkianGaubertLakhoua2008}
Akian,~M., Gaubert,~S., and Lakhoua,~A.
The max-plus finite element method for solving deterministic optimal control problems: basic properties and convergence analysis.
{\it SIAM Journal on Control and Optimization} 2008; {\bf 47}(2): 817--848.

\bibitem{KaiseMcEneaney2010}
Kaise,~H. and McEneaney,~W.\,M.
Idempotent expansions for continuous-time stochastic control: compact control space.
{\it Proceedings of the 49th IEEE Conference on Decision and Control} 2010: 7015--7020.

\bibitem{AkianFodjo2016}
Akian,~M. and Fodjo,~E.
A probabilistic max-plus numerical method for solving stochastic control problems.
{\it Proceedings of the 55th IEEE Conference on Decision and Control} 2016.
DOI: 10.1109/CDC.2016.7799411

\bibitem{KangWilcox2017}
Kang,~W. and Wilcox,~L.\,C.
Mitigating the curse of dimensionality: sparse grid characteristics method for optimal feedback control and HJB equations.
{\it Computational Optimization and Applications} 2017; {\bf 68}(2): 289--315.

\bibitem{Hopf1965}
Hopf,~E.
Generalized solutions of nonlinear equations of first order.
{\it Journal of Mathematics and Mechanics} 1965; {\bf 14}: 951--973.

\bibitem{Evans1998}
Evans,~L.\,C.
{\it Partial Differential Equations}.
Graduate Studies in Mathematics, {\bf 19}, American Mathematical Society, 1998.

\bibitem{Rublev2000}
Rublev,~I.\,V.
Generalized Hopf formulas for the nonautonomous Hamilton--Jacobi equation.
{\it Computational Mathematics and Modeling} 2000; {\bf 11}(4): 391--400.

\bibitem{Evans2014}
Evans,~L.\,C.
Envelopes and nonconvex Hamilton--Jacobi equations.
{\it Calculus of Variations and Partial Differential Equations} 2014; {\bf 50}(1--2): 257--282.

\bibitem{Mirica1985}
Miric{\u a},~S.
Extending Cauchy's method of characteristics for Hamilton--Jacobi equations.
{\it Studii {\c s}i Cercet{\u a}ri de Matematic{\u a}} 1985; {\bf 37}(6): 555--565.

\bibitem{Subbotina1991}
Subbotina,~N.\,N.
Method of Cauchy characteristics and generalized solutions of Hamilton--Jacobi--Bellman equations.
{\it Doklady AN SSSR} 1991; {\bf 320}: 556--561 (in Russian).

\bibitem{Subbotina1992}
Subbotina,~N.\,N.
Necessary and sufficient optimality conditions in terms of characteristics of the Hamilton--Jacobi--Bellman equation.
Report 393, Institut f{\" u}r Angewandte Mathematik und Statistic, Universit{\" a}t W{\" u}rzburg, W{\" u}rzburg, 1992.

\bibitem{Subbotina2006}
Subbotina,~N.\,N.
The method of characteristics for Hamilton--Jacobi equations and applications to dynamical optimization.
{\it Journal of Mathematical Sciences} 2006; {\bf 135}(3): 2955--3091.

\bibitem{SubbotinaTokmantsev2009_1}
Subbotina,~N.\,N. and Tokmantsev,~T.\,B.
On the efficiency of optimal grid synthesis in optimal control problems with fixed terminal time.
{\it Differential Equations} 2009; {\bf 45}: 1686--1697.

\bibitem{SubbotinaTokmantsev2009_2}
Subbotina,~N.\,N. and Tokmantsev,~T.\,B.
Estimating error of the optimal grid design in the problems of nonlinear optimal control of prescribed duration.
{\it Automation and Remote Control} 2009; {\bf 70}(9): 1565--1578.

\bibitem{SubbotinaKolpakova2010}
Subbotina,~N.\,N. and Kolpakova,~E.\,A.
On the structure of locally Lipschitz minimax solutions of the Hamilton-Jacobi-Bellman equation in terms of classical
characteristics.
{\it Proceedings of the Steklov Institute of Mathematics} 2010; {\bf 268}(suppl.~1): 222--239.

\bibitem{Rockafellar1970}
Rockafellar,~R.\,T.
{\it Convex Analysis}.
Princeton University Press: Princeton, New Jersey, 1970.

\bibitem{Rockafellar1974}
Rockafellar,~R.\,T.
{\it Conjugate Duality and Optimization}.
SIAM Publications: Philadelphia, 1974.

\bibitem{Pontryagin1964}
Pontryagin,~L.\,S., Boltyansky,~V.\,G., Gamkrelidze,~R.\,V., and Mishchenko,~E.\,F.
{\it The Mathematical Theory of Optimal Processes}.
Macmillan: New~York, 1964.

\bibitem{KrasovskiiSubbotin1974}
Krasovskii,~N.\,N. and Subbotin,~A.\,I.
{\it Positional Differential Games}.
Nauka: Moscow, 1974 (in Russian).

\bibitem{SubbotinChentsov1981}
Subbotin,~A.\,I. and Chentsov,~A.\,G.
{\it Optimization of Guaranteed Result in Control Problems}.
Nauka: Moscow, 1981 (in Russian).

\bibitem{KrasovskiiSubbotin1988}
Krasovskii,~N.\,N. and Subbotin,~A.\,I.
{\it Game-Theoretical Control Problems}.
Springer-Verlag: New~York, 1988.

\bibitem{Berkovitz1985}
Berkovitz,~L.\,D.
The existence of value and saddle point in games of fixed duration.
{\it SIAM Journal on Control and Optimization} 1985; {\bf 23}(2): 172--196.

\bibitem{BasarOlsder1995}
Ba{\c s}ar,~T. and Olsder,~G.\,J.
{\it Dynamic Noncooperative Game Theory}.
Academic Press: New~York, 1995.

\bibitem{Bernhard1977}
Bernhard,~P.
Singular surfaces in differential games: An introduction.
In: Hagedorn,~P., Knobloch,~H.\,W., and Olsder,~G.\,J. (Eds.) {\it Differential Games and Applications},
volume~3 of the series {\it Lecture Notes in Control and Information Sciences}, pp.~1--33.
Springer-Verlag: Berlin, 1977.

\bibitem{Bernhard2014}
Bernhard,~P.
Pursuit-evasion games and zero-sum two-person differential games.
{\it Encyclopedia of Systems and Control} 2014; 1--7. DOI: 10.1007/978-1-4471-5102-9\_270-1.
URL: https://hal.inria.fr/hal-01215556/document

\bibitem{MelikyanBernhard2005}
Melikyan,~A. and Bernhard,~P.
Geometry of optimal trajectories around a focal singular surface in differential games.
{\it Applied Mathematics and Optimization} 2005; {\bf 52}: 23--37.

\bibitem{PressNR2007}
Press,~W.\,H., Teukolsky,~S.\,A., Vetterling,~W.\,T., and Flannery,~B.\,P.
{\it Numerical Recipes{\rm :} The Art of Scientific Computing}.
Cambridge University Press: New~York, 2007.

\bibitem{Wang2009}
Wang,~L.
{\it Model Predictive Control System Design and Implementation Using MATLAB}.
Springer-Verlag: London, 2009.

\bibitem{KloedenPlaten1995}
Kloeden,~P.\,E. and Platen,~E.
{\it Numerical Solution of Stochastic Differential Equations}.
Springer-Verlag: Berlin, 1995.

\bibitem{Carletti2006}
Carletti,~M.
Numerical solution of stochastic differential problems in the biosciences.
{\it Journal of Computational and Applied Mathematics} 2006; {\bf 185}(2): 422--440.

\end{thebibliography}
\end{document}